\newtheorem{Satz}{Theorem}[section]
\newtheorem{Proposition}[Satz]{Proposition}
\newtheorem{Lemma}[Satz]{Lemma}
\newtheorem{Corollary}[Satz]{Corollary}
\newtheorem*{Hauptsatz}{Main Theorem}
\theoremstyle{definition}
\newtheorem{Definition}[Satz]{Definition}
\newtheorem{Bemerkung}[Satz]{Remark}
\theoremstyle{remark}
\newtheorem*{claim}{Claim}
\newcommand{\Z}{\mathbb{Z}}
\newcommand{\R}{\mathbb{R}}
\newcommand{\Q}{\mathbb{Q}}
\newcommand{\Hy}{\mathbb{H}}
\newcommand{\p}{\partial}
\newcommand{\ti}{\widetilde}
\newcommand{\ov}{\overline}
\newcommand{\e}{\varepsilon}
\newcommand{\Ha}{\mathcal{H}}
\newcommand{\U}{\overline{U}}
\newcommand{\NPSC}{\rm {NPSC^+}}
\newcommand{\pr}{{\rm pr}}
\DeclareMathOperator{\vol}{vol}
\DeclareMathOperator{\dive}{div}
\DeclareMathOperator{\Sc}{Sc}
\DeclareMathOperator{\dist}{dist}
\DeclareMathOperator{\diam}{diam}
\DeclareMathOperator{\area}{area}
\DeclareMathOperator{\wid}{width}
\DeclareMathOperator{\fil}{FillRad}
\DeclareMathOperator{\Lip}{Lip}
\DeclareMathOperator{\arcoth}{arcoth}
\begin{document}

\title{Scalar and mean curvature comparison via $\mu$-bubbles}

\author{Daniel R\"ade}

\address{Universit\"at Augsburg, Institut f\"ur Mathematik, 86135 Augsburg, Germany}

\email{daniel.raede@math.uni-augsburg.de}

\begin{abstract}
Following ideas of Gromov we prove scalar and mean curvature comparison results for Riemannian bands with lower scalar curvature bounds in dimension $n\leq7$. The model spaces we use are warped products over scalar-flat manifolds with $\log$-concave warping functions. 
\end{abstract}

\keywords{Positive scalar curvature, band width estimate, $\mu$-bubbles, aspherical manifold}

\subjclass[2010]{Primary: 53C21; Secondary: 53C23}

\maketitle

\section{Introduction}

This article is concerned with a class of manifolds called \emph{bands}. While Gromov gives a very general definition in \cite{Gro18}*{Section 2}, the following is enough for our purposes: 

\begin{Definition}\label{def:band}
A \emph{band} is a connected compact manifold $X$ together with a decomposition $$\p X=\p_-X\sqcup\p_+X,$$ where $\p_\pm X$ are (non-empty) unions of boundary components.
If $X$ is equipped with a Riemannian metric $g$, we call $(X,g)$ a \emph{Riemannian band} and denote by $\wid(X,g)$ the distance (with respect to $g$) between $\p_-X$ and $\p_+X$.
\end{Definition}

\begin{Bemerkung}
The standard example of a band is the cylinder $Y\times [-1,1]$, where $Y$ is a closed manifold. Such bands are called \emph{trivial} in this article.
\end{Bemerkung}

\begin{Definition}\label{def:bandmap}
A continuous map $f:X\to X'$ between two bands is called a \emph{band map} if it maps $\p_- X$ to $\p_- X'$ and $\p_+ X$ to $\p_+X'$. 
\end{Definition}

Let $X^{\geq2}$ be band and $\sigma$ be any real number. By Gromov's $h$-principle, there is a Riemannian metric $g$ on $X$ with scalar curvature $Sc(X,g)\geq\sigma$ and the space of all such metrics is contractible.

To encounter interesting phenomena, similar to the closed case, one needs to impose boundary conditions for the metric, which are typically phrased in terms of mean curvature of the boundary.
For an overview and some recent developments we refer to \cites{BHa20, BMN11, CaLi19}.

\begin{Bemerkung} \label{rem:convention1}
Let $(X,g)$ be a Riemannian manifold with boundary $\p X\neq\emptyset$. In this article $H(\p X,g)$ denotes the trace of the second fundamental form of $\p X$ with respect to the inner unit normal vector field.
With this convention the unit sphere $S^{n-1}\subset\R^n$ has mean curvature $(n-1)$. 
\end{Bemerkung}

We recall three results for the trivial band $T^{n-1}\times[-1,1]$. The first one is due to Gromov and Lawson and follows directly from \cite{GroLa80}*{Theorem A, Theorem 5.7}.

\begin{Satz}[\cite{GroLa80}]\label{cor:mean}
Let $X=T^{n-1}\times[-1,1]$ and $g$ be a Riemannian metric on $X$ with $\Sc(X,g)\geq\sigma>0$. If $H(\p X,g)$ denotes the mean curvature of the boundary, then $\inf_{x\in\p X}H(\p X,g)(x)<0$.
\end{Satz}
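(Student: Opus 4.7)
I would prove this by contradiction, following exactly the strategy suggested by the citation of Theorems A and 5.7 in \cite{GroLa80}. Suppose, aiming at a contradiction, that $\inf_{x\in\p X} H(\p X,g)(x)\geq 0$; since $\p X$ is compact and $H$ continuous this is equivalent to $H(\p X,g)\geq 0$ pointwise on $\p X$. The goal is then to manufacture from $(X,g)$ a closed Riemannian manifold diffeomorphic to $T^n$ and carrying a metric of positive scalar curvature, which directly contradicts the Gromov--Lawson torus obstruction \cite{GroLa80}*{Theorem~A}.

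The first step is to straighten out the metric near $\p X$. With the convention fixed in Remark \ref{rem:convention1}, the assumption $H\geq 0$ says precisely that $\p X$ is (weakly) mean-convex, which is the hypothesis that powers the bending construction \cite{GroLa80}*{Theorem~5.7}. Invoking that theorem (after a harmless $C^0$-small perturbation making $H$ strictly positive and losing an arbitrarily small amount of scalar curvature), I obtain a metric $\tilde g$ on $X$ which coincides with $g$ outside a thin collar of $\p X$, has $\Sc(X,\tilde g)\geq\sigma/2>0$, and on the collar is isometric to a Riemannian product $(h_\pm)\times dt^2$ for some metrics $h_\pm$ on the two components of $\p X$.

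The second step is to double $(X,\tilde g)$ across its boundary. Because $\tilde g$ is a genuine product near $\p X$, the doubled metric $\hat g$ on the double $DX$ is smooth across the seam, and clearly $\Sc(DX,\hat g)\geq\sigma/2>0$. Topologically, doubling the trivial band $X=T^{n-1}\times[-1,1]$ produces $T^{n-1}\times S^1=T^n$. Thus $\hat g$ is a metric of positive scalar curvature on the $n$-torus, which is impossible by \cite{GroLa80}*{Theorem~A}. This contradiction forces $\inf_{\p X} H(\p X,g)<0$.

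The only non-formal ingredient is the bending step; its content (that one can trade mean-convexity at the boundary for a product collar while losing only a small amount of scalar curvature) is exactly \cite{GroLa80}*{Theorem~5.7}, so I would quote it as a black box rather than reprove it. The doubling and the identification $D(T^{n-1}\times[-1,1])\cong T^n$ are entirely routine, and the torus obstruction is then applied verbatim.
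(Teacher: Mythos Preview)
Your argument is correct and is precisely the classical Gromov--Lawson doubling proof that the paper sketches in the sentence immediately following the statement of Theorem~\ref{cor:mean}. However, the paper's \emph{own} formal proof of Theorem~\ref{cor:mean} (given a few paragraphs later) takes a different route: it derives the result as a consequence of the comparison result Theorem~\ref{thm:example}. Assuming $H(\p X,g)\geq 0$, one rescales so that $\Sc(X,g)\geq n(n-1)$, then chooses $\ell_-=0$ and any $0<\ell_+<\wid(X,g)$; since $-(n-1)\tan(0)=0$ and $-(n-1)\tan\bigl(\tfrac{n\ell_+}{2}\bigr)<0$ the mean curvature hypotheses of Theorem~\ref{thm:example} are met, forcing $\wid(X,g)\leq \ell_+<\wid(X,g)$, a contradiction.

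Your doubling argument has the advantage of working uniformly in all dimensions with no spin or parity assumption, and it is self-contained modulo the two black-box results from \cite{GroLa80}. The paper's proof, by contrast, is presented only for $n$ odd (since Theorem~\ref{thm:example} is quoted from \cite{CeZei21} in that generality); its purpose is not to give the most direct argument but to illustrate that Theorem~\ref{cor:mean} and Theorem~\ref{thm:torusband} are both shadows of a single scalar--mean curvature comparison principle, which is the organizing theme of the article.
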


The proof is based on the fact that one could produce an impossible metric with positive scalar curvature on the doubled manifold $T^{n-1}\times S^1=T^n$, if the mean curvature were nonnegative.
The second result is due to Gromov \cite{Gro18}, who realized that the distance between the two boundary components in $T^{n-1}\times[-1,1]$ is bounded from above in terms of a lower scalar curvature bound.

\begin{Satz}[\cites{Ce 20,Gro18, RZ19}]\label{thm:torusband}
Let $X=T^{n-1}\times[-1,1]$. If $g$ is a Riemannian metric on $X$ with $\Sc(X,g)\geq\sigma>0$, then 
$$
\wid(X,g)=\dist_g\left(T^{n-1}\times\{-1\},T^{n-1}\times\{1\}\right)\leq 2\pi\sqrt{\frac{n-1}{\sigma n}}.
$$
\end{Satz}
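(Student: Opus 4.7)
The plan is Gromov's $\mu$-bubble argument, which in its sharp form translates the band-width bound into a scalar-curvature obstruction on a closed separating hypersurface.

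I would argue by contradiction: assume $\wid(X,g)>L:=2\pi\sqrt{(n-1)/(n\sigma)}$. The Riccati-type ODE
$$\rho'(t)+\tfrac{n}{2(n-1)}\rho(t)^{2}+\tfrac{\sigma}{2}=0$$
has the explicit (shifted tangent) solution $\rho(t)=-\sqrt{(n-1)\sigma/n}\,\tan\!\bigl(t\sqrt{n\sigma/(4(n-1))}\bigr)$, defined on the interval $(-L/2,L/2)$ with blow-ups at both endpoints. After replacing $\sigma$ by a slightly smaller $\sigma'<\sigma$, the analogous solution $\rho_{\sigma'}$ lives on an interval of length $L'\in(L,\wid(X,g))$. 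I would then compose $\rho_{\sigma'}$ with a smoothed $1$-Lipschitz signed-distance function $\tau:\mathring X\to(-L'/2,L'/2)$ that tends to $\mp L'/2$ near $\p_\mp X$, producing a smooth warping function $h:\mathring X\to\R$ with $h\to\pm\infty$ near $\p_\mp X$ and the pointwise bound
$$\nabla_{\!\nu}h+\tfrac{n}{2(n-1)}h^{2}+\tfrac{\sigma}{2}\geq\tfrac{\sigma-\sigma'}{2}>0$$
for every unit vector $\nu$ (using $|\nabla\tau|\leq1$ and $\sigma'<\sigma$).

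Next I would minimize the prescribed-mean-curvature functional
$$\mathcal A(\Omega)=\Ha^{n-1}(\p^{*}\Omega\cap\mathring X)-\int_\Omega h\,d\vol_g$$
over Caccioppoli sets $\Omega\subset X$ separating $\p_-X$ from $\p_+X$. The divergence of $h$ at each boundary component acts as a one-sided barrier, keeping the reduced boundary of any almost-minimizer in a fixed compact subset of $\mathring X$; standard lower semicontinuity and compactness then yield a minimizer $\Omega_*$. Because $n\leq7$, Federer--Almgren regularity (adapted to prescribed mean curvature) ensures that $\Sigma:=\p^{*}\Omega_*\cap\mathring X$ is a smooth closed hypersurface with $H_\Sigma=h|_\Sigma$ relative to the outward normal $\nu$ of $\Omega_*$. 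The second variation of $\mathcal A$ at $\Omega_*$ gives the stability inequality
$$\int_\Sigma|\nabla\phi|^{2}\,d\Ha^{n-1}\geq\int_\Sigma\bigl(|A|^{2}+\mathrm{Ric}_X(\nu,\nu)+\nabla_{\!\nu}h\bigr)\phi^{2}\,d\Ha^{n-1}.$$

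Substituting the Gauss equation $2\mathrm{Ric}_X(\nu,\nu)=\Sc_X-\Sc_\Sigma+H_\Sigma^{2}-|A|^{2}$, the trace bound $|A|^{2}\geq H_\Sigma^{2}/(n-1)=h^{2}/(n-1)$, the hypothesis $\Sc_X\geq\sigma$, and the differential inequality for $h$, the right-hand side dominates $-\tfrac12\int_\Sigma\Sc_\Sigma\phi^{2}$, which yields
$$\int_\Sigma\bigl(|\nabla\phi|^{2}+\tfrac12\Sc_\Sigma\phi^{2}\bigr)\,d\Ha^{n-1}>0\quad\text{for all }\phi\not\equiv0.$$
This forces the conformal Laplacian of $\Sigma$ to be positive (or in dimension two, applying $\phi\equiv1$ and Gauss--Bonnet forces $\Sigma\cong S^{2}$), so $\Sigma$ admits a metric of positive scalar curvature after conformal rescaling. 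On the other hand, $\Sigma$ separates the two boundary tori in $X=T^{n-1}\times[-1,1]$, so the projection onto the torus factor restricts to a degree-one map $\Sigma\to T^{n-1}$, contradicting the Schoen--Yau obstruction to PSC for closed manifolds admitting such a map (in the dimensions $\leq6$ relevant here).

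I expect the main obstacle to be the existence and interior regularity of the $\mu$-bubble minimizer: one must verify that the boundary divergence of $h$ really forces the infimum to be attained at an interior Caccioppoli set rather than degenerating onto $\p X$, adapt the direct method to the prescribed-mean-curvature functional, and invoke the Federer--Almgren dimension reduction---which is exactly what restricts the argument to $n\leq7$. By comparison, the algebraic reduction of the stability inequality and the final appeal to Schoen--Yau are routine once $\Sigma$ has been produced, and the Riccati profile is calibrated precisely so that $|A|^{2}\geq h^{2}/(n-1)$ exactly absorbs the $h^{2}$ contribution and yields the sharp constant $2\pi\sqrt{(n-1)/(n\sigma)}$.
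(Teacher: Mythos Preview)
Your argument is correct for $n\leq 7$ and is Gromov's direct $\mu$-bubble approach. One minor imprecision: the boundary $\Sigma=\p\Omega_*\cap\mathring X$ need not be connected, so before invoking the degree-one projection to $T^{n-1}$ you should pass to a connected component that still separates $\p_-X$ from $\p_+X$ (such a component exists, and since your operator inequality $-\Delta_\Sigma+\tfrac12\Sc_\Sigma>0$ holds on every component, that component still carries a PSC metric).

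The paper's proof of this particular statement, however, takes a different and much shorter route: it rescales so that $\Sc\geq n(n-1)$, uses compactness to bound $H(\p_\pm X)\geq c_\pm$, picks $-\pi/n<\ell_-<\ell_+<\pi/n$ with $c_\pm\geq\mp(n-1)\tan(n\ell_\pm/2)$, and then applies the Cecchini--Zeidler comparison result (Theorem~\ref{thm:example}) as a black box to conclude $\wid(X,g)\leq\ell_+-\ell_-<2\pi/n$. This is presented in the introduction as a quick corollary, meant to motivate the comparison framework, and is restricted to $n$ odd.

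That said, your direct argument is essentially the engine the paper itself builds in Section~\ref{section:bubbles} (existence of the minimiser, the second-variation inequality~\eqref{eq:key}, and the passage to positivity of $-\Delta_\Sigma+\tfrac12\Sc_\Sigma$) and then uses to prove the generalisation Theorem~\ref{thm:groband} via Theorem~\ref{thm:a}. The packaging differences are cosmetic: the paper enforces a finite mean-curvature barrier $H(\p_\pm X)>\pm h$ (Lemma~\ref{lem:exmean}) rather than letting $h$ blow up at $\p X$, and phrases your Riccati profile as comparison with an explicit warped-product model space rather than as a bare ODE. Both versions yield the sharp constant and the same dimensional restriction $n\leq 7$.
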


Gromov proved Theorem \ref{thm:torusband} in \cite{Gro18}*{Section 2} for $n\leq 7$ using the minimal hypersurface approach of Schoen and Yau \cite{SY79} combined with a symmetrization argument inspired by Fischer-Colbrie and Schoen \cite{FCS}.

Subsequently Zeidler \cite{RZ19}*{Theorem 1.4} and Cecchini \cite{Ce20}*{Theorem D} proved Theorem \ref{thm:torusband} in all dimensions using Dirac operator methods.
The estimate is sharp but equality can not be attained.

Based on ideas of Gromov \cite{Gro19}*{Section 5.5}, Cecchini and Zeidler \cite{CeZei21}*{Theorem 7.6} showed that Theorem \ref{cor:mean} and Theorem \ref{thm:torusband} are connected by a scalar and mean curvature comparison principle for Riemannian metrics on $T^{n-1}\times[-1,1]$.

\begin{Satz}[\cite{CeZei21}]\label{thm:example}
For $n$ odd let $X=T^{n-1}\times[-1,1]$ and $g$ be a Riemannian metric on $X$. If 
\begin{itemize}
\item[$\triangleright$] $\Sc(X,g)\geq n(n-1),$
\item[$\triangleright$] $H(\p_\pm X,g)\geq\mp (n-1)\tan\left(\frac{n\ell_\pm}{2}\right)$ for some $-\frac{\pi}{n}<\ell_-<\ell_+<\frac{\pi}{n}$,
\end{itemize} 
then $\wid(X,g)\leq \ell_+-\ell_-$.
\end{Satz}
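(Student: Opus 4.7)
The strategy is Gromov's $\mu$-bubble technique, adapted to this mean-curvature comparison setting as in Cecchini--Zeidler \cite{CeZei21}. The relevant model is the warped product $dt^{2}+\cos^{4/n}(nt/2)\,g_{\mathrm{flat}}$ on $T^{n-1}\times(-\pi/n,\pi/n)$: a direct calculation shows its scalar curvature is identically $n(n-1)$, and the slices $\{t=\ell_\pm\}$, as boundaries of the sub-band $T^{n-1}\times[\ell_-,\ell_+]$, realise with equality the extremal mean curvatures $\mp(n-1)\tan(n\ell_\pm/2)$ of the hypothesis. Assume for contradiction that $\wid(X,g)>\ell_+-\ell_-$. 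The strict excess lets me build a smooth $\rho\colon X\to[\ell_-,\ell_+]$ with $\rho\equiv\ell_\pm$ on neighbourhoods of $\p_\pm X$ and $|\nabla\rho|<1$ in $X$; set $h(x)=-(n-1)\tan(n\rho(x)/2)$, which at the boundary gives $h|_{\p_\pm X}=\mp(n-1)\tan(n\ell_\pm/2)$ matching the model.

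Consider the $\mu$-bubble functional
\[
\mathcal{A}(\Omega)=\Ha^{n-1}(\p^{\ast}\Omega\cap\mathrm{int}(X))-\int_{\Omega}h\,dV_{g}
\]
on Caccioppoli sets $\Omega\subset X$, set up as in \cite{CeZei21}*{Section 7} so that the matching at $\p_\pm X$ combined with the mean-curvature hypothesis makes $\p_\pm X$ barriers for the minimisation. Standard geometric measure theory together with interior regularity for $n\leq 7$ then produces a minimiser whose reduced boundary $\Sigma\subset\mathrm{int}(X)$ is a smooth closed hypersurface satisfying the Euler--Lagrange equation $H_\Sigma=h|_\Sigma$.

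The heart of the argument is the second-variation inequality $\delta^{2}\mathcal{A}|_\Sigma\ge 0$,
\[
\int_\Sigma|\nabla\psi|^{2}\,dA\;\geq\;\int_\Sigma\bigl(|II|^{2}+\mathrm{Ric}(\nu,\nu)+\p_\nu h\bigr)\psi^{2}\,dA\qquad\text{for all }\psi\in C^{\infty}(\Sigma).
\]
Eliminating $\mathrm{Ric}(\nu,\nu)$ by the Gauss equation, applying $|II|^{2}\ge H_\Sigma^{2}/(n-1)$, the identity $H_\Sigma^{2}=(n-1)^{2}\tan^{2}(n\rho/2)$ from the Euler--Lagrange equation, the hypothesis $\Sc_X\ge n(n-1)$, and the bound $\p_\nu h\ge-\tfrac{n(n-1)}{2}\sec^{2}(n\rho/2)\,|\nabla\rho|$, the trigonometric identity $1+\tan^{2}=\sec^{2}$ yields
\[
\int_\Sigma\bigl(|\nabla\psi|^{2}+\tfrac12\Sc_\Sigma\psi^{2}\bigr)\,dA\;\ge\;c\int_\Sigma\psi^{2}\,dA
\]
with $c>0$ depending on $\sup_X|\nabla\rho|<1$.

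Strict positivity of $-\Delta_\Sigma+\tfrac12\Sc_\Sigma$ implies, by the standard Schoen--Yau conformal change, that $\Sigma$ admits a metric of positive scalar curvature. On the other hand, $\Sigma$ separates the two boundary components of $X=T^{n-1}\times[-1,1]$, so the projection $\Sigma\hookrightarrow X\to T^{n-1}$ has non-zero degree; in dimensions $n-1\leq 6$ this precludes PSC by iterated Schoen--Yau descent (equivalently, by the enlargeability of $T^{n-1}$ \cite{GroLa80}). This contradiction proves the theorem. The main obstacle I anticipate is the algebraic collapse leading to the final display: the choice $h=-(n-1)\tan(n\rho/2)$ and the warping profile $\cos^{2/n}(nt/2)$ are forced exactly by the requirement that the right-hand side become non-negative under $\Sc_X\geq n(n-1)$ and $|\nabla\rho|\leq 1$, via the identity $1+\tan^{2}=\sec^{2}$. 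A secondary subtlety is verifying the barrier argument at $\p_\pm X$ in light of the paper's inner-normal sign convention and the specific $\mp$-signs in the mean-curvature hypothesis.
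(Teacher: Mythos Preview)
Your argument is the paper's own $\mu$-bubble method, essentially the proof of Theorem~\ref{thm:comparisongeneralA} specialised to the torus band and the warping function $\varphi(t)=\cos(nt/2)^{2/n}$. Note, however, that the paper does not prove Theorem~\ref{thm:example} itself: it is quoted from \cite{CeZei21}, where it is established via Dirac operator methods valid for all odd $n$. The paper's $\mu$-bubble approach instead yields the companion Theorem~\ref{thm:a}, which drops the parity assumption but requires $n\leq 7$ for regularity of the minimiser. Your proof inherits this restriction (you invoke interior regularity for $n\leq 7$ and Schoen--Yau descent for $n-1\leq 6$), so it does not cover odd $n\geq 9$, which the stated theorem does.

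There is one genuine technical gap in your barrier step. With $\rho\equiv\ell_\pm$ near $\p_\pm X$ you obtain $h|_{\p_\pm X}=\mp(n-1)\tan(n\ell_\pm/2)$, and the hypothesis gives only $H(\p_\pm X,g)\geq\pm h$, not strict inequality. The existence argument for the $\mu$-bubble (Lemma~\ref{lem:exmean}) requires the strict barrier condition $H(\p_\pm X)>\pm h$; as the paper explicitly notes in Remark~\ref{rem:weakness}, relaxing to $\geq$ is problematic when $h$ is non-constant because no strong maximum principle is available. The paper's fix (proof of Theorem~\ref{thm:comparisongeneralA}) is to use the extra room $\wid(X,g)>\ell_+-\ell_-+2\e$ to map instead into the enlarged interval $[\ell_--\e,\ell_++\e]$: since $h_\varphi(t)=-(n-1)\tan(nt/2)$ is strictly decreasing, the boundary values become $\mp(n-1)\tan(n(\ell_\pm\pm\e)/2)$, which are strictly smaller than the assumed mean-curvature bounds, restoring the strict barrier. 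Your second-variation computation and the contradiction via enlargeability of $T^{n-1}$ are correct and match the paper's Remark~\ref{rem:sharp1} and Lemma~\ref{lem:conf}.
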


\begin{Bemerkung}\label{rem:explanation}
Let $g_0$ be a scalar flar Riemannian metric on $T^{n-1}$.
Consider the warped product
$$(M,g_\varphi)=\left(T^{n-1}\times[\ell_-,\ell_+], \varphi^2(t)g_0+dt^2\right),$$
where $\varphi(t)=\cos(\frac{nt}{2})^{\frac{2}{n}}$. Standard results for warped products (see \ref{section:basics}) imply $\Sc(M,g_\varphi)=n(n-1)$ and $H(\p_\pm M,g_\varphi)=\mp (n-1)\tan\left(\frac{n\ell_\pm}{2}\right)$.
For $n$ odd let $X=T^{n-1}\times[-1,1]$ and $g$ be a Riemannian metric on $X$.
Theorem \ref{thm:example} is a comparison result in the following sense:
If $\Sc(X,g)\geq\Sc(M,g_\varphi)$ and $H(\p_\pm X,g)\geq H(\p_\pm M,g_\varphi)$, then $\wid(X,g)\leq\wid(M,g_\varphi)$.
\end{Bemerkung}

We quickly show how Theorem \ref{cor:mean} and Theorem \ref{thm:torusband} are implied by Theorem \ref{thm:example} for $n$ odd.

\begin{proof}[Proof of Theorem \ref{cor:mean}]
Suppose $H(\p X,g)\geq0$. 
By rescaling the metric we can assume that $\Sc(X,g)\geq n(n-1)$.
Since $X$ is compact, $\wid(X,g)$ is a positive real number.
Let $0=\ell_-<\ell_+<\wid(X,g)$.
By Theorem \ref{thm:example} we conclude that $\wid(X,g)\leq \ell_+-\ell_-=\ell_+<\wid(X,g)$.
This is a contradiction.
\end{proof}

\begin{proof}[Proof of Theorem \ref{thm:torusband}]
By rescaling the metric we can assume that $\Sc(X,g)\geq n(n-1)$.
Since $X$ is compact, there are constants $c_-$ and $c_+$ such that $H(\p_\pm X,g)\geq c_\pm$.
Since $(n-1)\tan\left(\frac{nt}{2}\right)\to-\infty$ as $t\to-\pi/n$ and $-(n-1)\tan\left(\frac{nt}{2}\right)\to-\infty$ as $t\to\pi/n$ we can find $-\pi/n<\ell_-<\ell_+<\pi/n$ with
$$c_\pm\geq\mp (n-1)\tan\left(\frac{n\ell_\pm}{2}\right).$$
By Theorem \ref{thm:example} we conclude that $\wid(X,g)\leq \ell_+-\ell_-<\frac{2\pi}{n}$.
\end{proof}

In this article we establish some general results (see Section \ref{section:results}) regarding scalar and mean curvature comparison of Riemannian bands with warped products over closed Riemannian manifolds.
Our work is inspired by \cite{CeZei21}, where Cecchini and Zeidler prove a number of such comparison results for certain classes of spin bands using Dirac operator methods. 
However, instead of relying on the Dirac operator, we follow ideas of Gromov, which already appear in \cite{Gro96}*{Section 5$\frac{5}{6}$} and are developed further in \cite{Gro18}*{Section 9} and \cite{Gro19}*{Section 5}, and use a version of the minimal surface approach involving so called $\mu$-bubbles (see Section \ref{section:bubbles}).

To ensure regularity of these $\mu$-bubbles our results are limited to dimension $n\leq7$. Apart from that we have the advantage that we do not need to assume our bands to be spin and odd dimensional, as is done throughout \cite{CeZei21}. Consequently we can work in full generality and establish optimal results towards Gromov's band width conjecture and Rosenberg's $S^1$-stability conjecture \cite{Ro07}*{Conjecture 1.24} in dimensions $n\in \{6,7\}$ (see Corollary \ref{cor:Rosenberg} and Remark \ref{rem:Rosenberg}).

Furthermore, the comparison principle we present provides a general framework, in the context of which the  connections between previous results, regarding Riemannian bands with lower scalar curvature bounds, become apparent.

\section{Comparison Results}\label{section:results} In the following we formulate our main result concerning scalar and mean curvature comparison of Riemannian bands with warped products.

\begin{Definition}\label{def:sep}
Let $X$ be a band. We say that a closed embedded hypersurface $\Sigma\subset X$ \emph{separates} $\p_-X$ and $\p_+X$ if no connected component of $X\backslash\Sigma$ contains a path $\gamma:[0,1]\to X$ with $\gamma(0)\in\p_-X$ and $\gamma(1)\in\p_+X$.
Furthermore $\Sigma$ \emph{properly separates} $\p_-X$ and $\p_+X$ if every connected component of $\Sigma$ can be connected to both $\p_+ X$ and $\p_- X$ inside $X \setminus \Sigma$.

\end{Definition}

\begin{Definition}\label{def:logconcavealt}
A smooth function $\varphi:[a,b]\to \R_+$ is called $\log$-\emph{concave} if
$$\frac{d^2}{dt^2}\log(\varphi)(t)=\left(\frac{\varphi'(t)}{\varphi(t)}\right)'\leq0$$
for all $t\in[a,b]$.
If the inequality is strict we say that $\varphi$ is \emph{strictly $\log$-concave}. In case of equality we say that $\varphi$ is $\log$-\emph{constant}.
\end{Definition}

\begin{Definition}\label{def:model}
Let $(N,g_N)$ be a closed Riemannian manifold with constant scalar curvature. 
A warped product 
$$\left(M,g_\varphi)=(N\times[a,b],\varphi^2(t)g_N+dt^2\right)$$
with warping function $\varphi:[a,b]\to\R_+$ is called a \emph{model space} if $\Sc(M,g_\varphi)$ is constant and $\varphi$ is strictly $\log$-concave or $\log$-constant.
\end{Definition}

\begin{Hauptsatz}\label{MainTheorem}
Let $n\leq7$ and $X^n$ be an oriented band with the property that no closed embedded hypersurface $\Sigma$, which separates $\p_-X$ and $\p_+X$, admits a metric with positive scalar curvature. Let $g$ be a Riemannian metric on $X$ and $(M^n,g_\varphi)$ be a model space over a scalar flat base with warping function $\varphi:[a,b]\to\R_+$. If 
\begin{itemize}
\item[$\triangleright$] $\Sc(X,g)\geq\Sc(M,g_\varphi)$,
\item[$\triangleright$] $H(\p_\pm X,g)\geq H(\p_\pm M,g_\varphi)$,
\end{itemize}
we distinguish two cases:
\begin{enumerate}
\item \label{itm:A} If $\varphi$ is strictly $\log$-concave, then $\wid(X,g)\leq\wid(M,g_\varphi)$.
\item \label{itm:B} If $\varphi$ is $\log$-constant, then $(X,g)$ is isometric to a warped product
$$\left(\hat{N}\times[c,d], \varphi^2g_{\hat{N}}+dt^2 \right),$$
where $(\hat{N},g_{\hat{N}})$ is a closed Ricci flat Riemannian manifold.
\end{enumerate}
\end{Hauptsatz}

\begin{Bemerkung}\label{rem:rigidity}
It is expected that part \eqref{itm:A} of the Main Theorem is rigid as well ie for $\varphi$ strictly $\log$-concave we have $\wid(X,g)=\wid(M,g_\varphi)$ if and only if $(X,g)$ is isometric to a warped product 
$$\left(\hat{N}\times[a,b], \varphi^2g_{\hat{N}}+dt^2 \right),$$
where $(\hat{N},g_{\hat{N}})$ is a closed Ricci flat Riemannian manifold. For spin bands with $\hat{A}(\p_-X)\neq0$ this holds true by work of Cecchini and Zeidler \cite{CeZei21}*{Theorem 8.3, Theorem 9.1}. In  our case there are some obstacles yet to be overcome (see Remark \ref{rem:weakness}). On the other hand the $\log$-constant case ie part \eqref{itm:B} of the Main Theorem is not treated in \cite{CeZei21}.
\end{Bemerkung}

\begin{Bemerkung}
Even if rigidity in \eqref{itm:A} can be established, the two parts of the Main Theorem have to be treated separately, as the width of the band plays a role only in the strictly $\log$-concave case. We point out that we have no control over the width of the band in part \eqref{itm:B} ie the $\log$-constant case and $X$ can be isometric to a model space of arbitrary finite width. 
\end{Bemerkung}

\begin{Bemerkung}\label{rem:2d}
It turns out that the Main Theorem holds true for any oriented band X in dimension $n=2$, where the condition that no closed embedded hypersurface admits a positive scalar curvature metric is vacuous. This will become apparent in Section \ref{section:comparisonresults} (see Remark \ref{rem:2d2}).
\end{Bemerkung}

\subsection{Model Spaces and Applications}\label{section:models} 
For this subsection let $X^n$ be an oriented band with the property that no closed embedded hypersurface $\Sigma$, which separates $\p_-X$ and $\p_+X$, admits a metric with positive scalar curvature and $(N^{n-1},g_N)$ be a closed scalar flat Riemannian manifold.
To understand the type of results we can deduce from our Main Theorem we consider five exemplary model spaces.

For $-\frac{\pi}{n}<\ell_-<\ell_+<\frac{\pi}{n}$ and the $\varphi(t)= \cos\left(\frac{nt}{2}\right)^{\frac{2}{n}}$ (strictly $\log$-concave), the warped product
$$(N\times[\ell_-,\ell_+], \varphi^2(t)g_N+dt^2)$$
is a model space with scalar curvature $n(n-1)$. Plugging this into part \eqref{itm:A} yields:

\begin{Satz}\label{thm:a}
Let $n\leq7$ and $g$ be a Riemannian metric on $X$. If
\begin{itemize}
\item[$\triangleright$] $\Sc(X,g)\geq n(n-1)$
\item[$\triangleright$] $H(\p_\pm X,g)\geq\mp (n-1)\tan\left(\frac{n\ell_\pm}{2}\right)$ for some $-\frac{\pi}{n}<\ell_-<\ell_+<\frac{\pi}{n}$,
\end{itemize}
then $\wid(X,g)\leq \ell_+-\ell_-$.
\end{Satz}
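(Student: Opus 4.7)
The plan is to deduce Theorem \ref{thm:a} directly from part \eqref{itm:A} of the Main Theorem by exhibiting a model space whose curvature data match the hypotheses on $(X,g)$. Concretely, I would fix any closed scalar flat Riemannian manifold $(N^{n-1}, g_N)$ (for example the flat torus $T^{n-1}$ with its product metric) and consider the warped product
$$(M,g_\varphi)=\left(N\times[\ell_-,\ell_+],\ \varphi^2(t)g_N+dt^2\right),\qquad \varphi(t)=\cos\!\left(\tfrac{nt}{2}\right)^{\frac{2}{n}}.$$

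The first step is to check that $(M,g_\varphi)$ is indeed a model space in the sense of Definition \ref{def:model}. Differentiating $\log\varphi(t)=\tfrac{2}{n}\log\cos(nt/2)$ twice gives
$$\left(\log\varphi\right)''(t)=-\tfrac{n}{2}\sec^2(nt/2)<0$$
on $(-\pi/n,\pi/n)$, so $\varphi$ is strictly $\log$-concave on $[\ell_-,\ell_+]$. The warped-product curvature formulas recalled in Remark \ref{rem:explanation} then show that $\Sc(M,g_\varphi)=n(n-1)$ is constant and that $H(\p_\pm M,g_\varphi)=\mp(n-1)\tan(n\ell_\pm/2)$, with $\wid(M,g_\varphi)=\ell_+-\ell_-$.

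The second step is the actual comparison. The hypotheses of Theorem \ref{thm:a} translate literally into
$$\Sc(X,g)\geq\Sc(M,g_\varphi)\qquad\text{and}\qquad H(\p_\pm X,g)\geq H(\p_\pm M,g_\varphi),$$
and by assumption $X$ is an oriented band satisfying the topological hypothesis of the Main Theorem. Applying part \eqref{itm:A} therefore yields
$$\wid(X,g)\leq\wid(M,g_\varphi)=\ell_+-\ell_-,$$
as required.

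There is no serious obstacle here: all analytic difficulty (the $\mu$-bubble construction, the symmetrization producing the warped-product comparison, and the dimensional restriction $n\leq7$ for regularity) is absorbed into the Main Theorem. The only things to verify are the elementary computation that $\varphi(t)=\cos(nt/2)^{2/n}$ has the claimed curvature profile and the elementary fact that a closed scalar flat manifold of dimension $n-1$ exists so that the model space itself can be built.
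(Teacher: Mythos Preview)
Your proposal is correct and follows exactly the same route as the paper: the paper introduces the warped product over a scalar flat base with $\varphi(t)=\cos(nt/2)^{2/n}$, notes that it is a model space with scalar curvature $n(n-1)$ and boundary mean curvatures $\mp(n-1)\tan(n\ell_\pm/2)$, and then simply states that Theorem \ref{thm:a} follows by plugging this into part \eqref{itm:A} of the Main Theorem. Your write-up spells out the verifications (strict $\log$-concavity, the curvature computations) that the paper leaves implicit, but the argument is identical.
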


Theorem \ref{thm:a} is a general version of Theorem \ref{thm:example}, as it applies to $X=T^{n-1}\times[-1,1]$ (see \ref{section:examples}). As Theorem \ref{thm:example} implied Theorem \ref{thm:torusband}, Theorem \ref{thm:a} implies the following result due to Gromov, which appears in \cite{Gro19}*{Section 3.6}. Note that we upgrade his result to strict inequality.

\begin{Satz}[\cite{Gro19}*{Section 3.6}]\label{thm:groband}
If $n\leq7$ and $g$ is a Riemannian metric on $X$ with $\Sc(X,g)\geq n(n-1)$, then
$\wid(X,g)< \frac{2\pi}{n}.$
\end{Satz}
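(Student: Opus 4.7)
The plan is to derive Theorem \ref{thm:groband} directly from Theorem \ref{thm:a}, mimicking the formal deduction of Theorem \ref{thm:torusband} from Theorem \ref{thm:example} given earlier in the excerpt. The underlying idea is that, even without any mean curvature hypothesis on $\partial X$, the divergence of the model mean curvatures $\mp(n-1)\tan(n\ell_\pm/2)$ as $\ell_\pm \to \mp \pi/n$ lets us absorb an arbitrary finite lower bound on $H(\partial_\pm X, g)$ by shrinking the interval $[\ell_-,\ell_+]$ toward $(-\pi/n, \pi/n)$.

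Concretely, I would first use that compactness of $X$ together with smoothness of $g$ up to the boundary gives constants $c_\pm \in \R$ with $H(\partial_\pm X, g) \geq c_\pm$. Since $(n-1)\tan(nt/2) \to -\infty$ as $t \to -\pi/n$ from above, and $-(n-1)\tan(nt/2) \to -\infty$ as $t \to \pi/n$ from below, I can choose $-\pi/n < \ell_- < \ell_+ < \pi/n$ satisfying
\[
c_\pm \geq \mp (n-1)\tan\!\left(\tfrac{n\ell_\pm}{2}\right).
\]
Both hypotheses of Theorem \ref{thm:a} are then in force (the scalar curvature bound $\Sc(X,g) \geq n(n-1)$ is assumed), so that theorem yields $\wid(X,g) \leq \ell_+ - \ell_- < 2\pi/n$.

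The strict inequality in the conclusion, which the author explicitly flags as an upgrade over Gromov's estimate, is automatic: since $\ell_\pm$ lie strictly inside $(-\pi/n, \pi/n)$, we obtain $\ell_+ - \ell_- < 2\pi/n$ with no additional work. There is no real obstacle here, since the geometric content has been packaged into Theorem \ref{thm:a}; the present statement is essentially its formal corollary, with the asymptotics of $\tan$ playing the role of the missing boundary hypothesis.
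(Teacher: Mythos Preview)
Your proposal is correct and matches the paper's approach exactly: the paper does not write out a separate proof of Theorem~\ref{thm:groband} but states that it follows from Theorem~\ref{thm:a} in the same way that Theorem~\ref{thm:torusband} followed from Theorem~\ref{thm:example}, which is precisely the argument you give (compactness yields lower bounds $c_\pm$ on $H(\partial_\pm X,g)$, the divergence of $\mp(n-1)\tan(n\ell_\pm/2)$ lets you choose $\ell_\pm$ strictly inside $(-\pi/n,\pi/n)$, and Theorem~\ref{thm:a} then gives $\wid(X,g)\leq\ell_+-\ell_-<2\pi/n$).
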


Furthermore we generalize Theorem \ref{cor:mean}:

\begin{Corollary}\label{cor:a:mean}
If  $n\leq7$  and $g$ is a Riemannian metric on $X$ with $\Sc(X,g)\geq n(n-1)$, then
$$
\inf_{x\in\p_+X}H(\p_+X,x)\ \ +\inf_{x\in\p_-X}H(\p_-X,x)<0.
$$
\end{Corollary}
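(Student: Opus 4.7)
The plan is to argue by contradiction and reduce directly to Theorem \ref{thm:a}, in the same spirit as the proof of Theorem \ref{cor:mean} given earlier. Set
$$c_\pm := \inf_{x\in\p_\pm X}H(\p_\pm X,g)(x),$$
which are real numbers since $\p X$ is compact, and suppose toward a contradiction that $c_-+c_+\geq 0$. I want to exhibit, for every sufficiently small $\delta>0$, admissible parameters $\ell_\pm \in (-\pi/n,\pi/n)$ with $\ell_+-\ell_-=\delta$ satisfying the hypotheses of Theorem \ref{thm:a}; then that theorem forces $\wid(X,g)\leq\delta$ for all small $\delta$, contradicting $\wid(X,g)>0$ (the two compact boundary components are disjoint).

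The key observation is that $t\mapsto (n-1)\tan(nt/2)$ is an odd, strictly increasing bijection from $(-\pi/n,\pi/n)$ onto $\R$. I would define $\ell_-\in(-\pi/n,\pi/n)$ as the unique solution of $(n-1)\tan(n\ell_-/2)=c_-$, and let $\ell_+^*\in(-\pi/n,\pi/n)$ denote the unique solution of $-(n-1)\tan(n\ell_+^*/2)=c_+$. The assumption $c_-+c_+\geq 0$ reads $c_-\geq -c_+$, so by monotonicity $\ell_-\geq \ell_+^*$. Now for any $0<\delta<\pi/n-\ell_-$, set $\ell_+:=\ell_-+\delta$; then $\ell_+>\ell_+^*$, so monotonicity gives $c_+>-(n-1)\tan(n\ell_+/2)$, while $c_-=(n-1)\tan(n\ell_-/2)$ by construction. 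Both boundary hypotheses of Theorem \ref{thm:a} are thus met, and the theorem yields $\wid(X,g)\leq\ell_+-\ell_-=\delta$. Letting $\delta\to 0$ produces the desired contradiction.

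The argument is essentially a bookkeeping reduction to Theorem \ref{thm:a}, so I do not anticipate any serious obstacle. The main point to verify is that the weaker assumption $c_-+c_+\geq 0$ (rather than the one-sided $H(\p X,g)\geq 0$ of Theorem \ref{cor:mean}) still forces the interval $[\ell_-,\ell_+]$ in the model family to be squeezable to arbitrarily small length; this is precisely what the inequality $\ell_-\geq \ell_+^*$ above encodes, via the oddness and monotonicity of $\tan(n\,\cdot\,/2)$.
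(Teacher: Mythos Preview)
Your proof is correct and follows essentially the same approach the paper sets up: the corollary is left without explicit proof, but the intended argument is exactly the contradiction-and-squeeze reduction to Theorem~\ref{thm:a}, mirroring the proof of Theorem~\ref{cor:mean} given in the introduction (there with $\ell_-=0$). Your use of the oddness and monotonicity of $t\mapsto(n-1)\tan(nt/2)$ to translate the two-sided assumption $c_-+c_+\geq0$ into $\ell_-\geq\ell_+^*$ is the right bookkeeping generalization.
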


For $0<\ell_-<\ell_+<\infty$ and $\varphi(t)= t^{\frac{2}{n}}$ (strictly $\log$-concave) the warped product
$$(N\times[\ell_-,\ell_+], \varphi^2(t)g_N+dt^2)$$
is a scalar flat model space. Plugging this into part \eqref{itm:A} yields:

\begin{Satz}\label{thm:b}
Let $n\leq7$ and $g$ be a Riemannian metric on $X$. If
\begin{itemize}
\item[$\triangleright$] $\Sc(X,g)\geq0$
\item[$\triangleright$] $H(\p_\pm X,g)\geq\pm\frac{2(n-1)}{n\ell_\pm}$ for some $0<\ell_-<\ell_+<\infty$,
\end{itemize}
then $\wid(X,g)\leq \ell_+-\ell_-$.
\end{Satz}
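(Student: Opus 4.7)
The plan is to read off Theorem \ref{thm:b} as a direct instance of part \eqref{itm:A} of the Main Theorem by exhibiting the appropriate model space. Pick any closed scalar-flat Riemannian manifold $(N^{n-1}, g_N)$ (a flat $(n-1)$-torus will do) and consider the warped product
\[
(M, g_\varphi) = \bigl( N \times [\ell_-, \ell_+],\ \varphi(t)^2 g_N + dt^2 \bigr), \qquad \varphi(t) = t^{2/n}.
\]
I would proceed by checking that $(M, g_\varphi)$ is a model space over a scalar-flat base in the sense of Definition \ref{def:model} and that its curvature data line up with the right-hand sides in the hypotheses of Theorem \ref{thm:b}.

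First, log-concavity: since $\ell_- > 0$, the function $\log \varphi(t) = (2/n)\log t$ has second derivative $-2/(n t^2) < 0$ throughout $[\ell_-, \ell_+]$, so $\varphi$ is strictly log-concave in the sense of Definition \ref{def:logconcavealt}. Second, scalar curvature: inserting $\varphi'/\varphi = 2/(n t)$ and $\varphi''/\varphi = 2(2-n)/(n^2 t^2)$ together with $\Sc(N) = 0$ into the standard warped-product scalar-curvature formula (to be recalled in Section \ref{section:basics}), the terms $-2(n-1)\varphi''/\varphi$ and $-(n-1)(n-2)(\varphi'/\varphi)^2$ cancel exactly, so $\Sc(M, g_\varphi) \equiv 0$ and $(M, g_\varphi)$ is indeed a scalar-flat model space.

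Third, boundary mean curvature: with the inner-normal convention of Remark \ref{rem:convention1}, the standard warped-product formula yields $H(\partial_\pm M, g_\varphi) = \pm (n-1)\varphi'(\ell_\pm)/\varphi(\ell_\pm) = \pm 2(n-1)/(n \ell_\pm)$. The hypotheses of Theorem \ref{thm:b} therefore read exactly $\Sc(X, g) \geq \Sc(M, g_\varphi)$ and $H(\partial_\pm X, g) \geq H(\partial_\pm M, g_\varphi)$, while $\wid(M, g_\varphi) = \ell_+ - \ell_-$ by construction. Invoking part \eqref{itm:A} of the Main Theorem then delivers $\wid(X, g) \leq \ell_+ - \ell_-$.

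There is no real obstacle at this stage: once the Main Theorem is in hand, Theorem \ref{thm:b} is essentially bookkeeping on the model side. The only place the assumption $\ell_- > 0$ is essential is to keep $\varphi(t) = t^{2/n}$ smooth and \emph{strictly} log-concave on the entire interval, so that one lands in case \eqref{itm:A} rather than the rigid case \eqref{itm:B}.
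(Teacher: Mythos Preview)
Your proposal is correct and follows exactly the approach taken in the paper: exhibit the warped product over a scalar-flat base with $\varphi(t)=t^{2/n}$ on $[\ell_-,\ell_+]$ as a scalar-flat model space with strictly $\log$-concave warping function, verify that its boundary mean curvatures are $\pm\frac{2(n-1)}{n\ell_\pm}$, and invoke part~\eqref{itm:A} of the Main Theorem. The paper states this derivation in a single sentence just before Theorem~\ref{thm:b}; you have simply written out the curvature computations that the paper leaves implicit.
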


\begin{Corollary}\label{cor:b}
If  $n\leq7$ and $g$ is a Riemannian metric on $X$ with $\Sc(X,g)\geq 0$. If $H(\p_+X)>0$, then 
$$
\wid(X,g)<\frac{2(n-1)}{n\left(\inf_{x\in\p_+X}H(\p_+X,x)\right)}.
$$
\end{Corollary}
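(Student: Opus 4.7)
The plan is to apply the comparison result Theorem \ref{thm:b} directly, with a clever choice of the parameters $\ell_\pm$ in terms of the boundary data of $(X,g)$. Set $c_+ := \inf_{x\in\p_+X}H(\p_+X,x)$, which is a positive real number because $\p_+X$ is compact and $H(\p_+X)>0$ by hypothesis. The natural candidate for $\ell_+$ is $\ell_+ := \frac{2(n-1)}{nc_+}$: with this choice the hypothesis $H(\p_+X,g) \geq \frac{2(n-1)}{n\ell_+}$ of Theorem \ref{thm:b} is equivalent to $H(\p_+X,g) \geq c_+$, which holds by the definition of $c_+$.

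Next I would produce a suitable $\ell_-$. Since $X$ is compact, $H(\p_-X,g)$ is bounded below by some constant $c_- \in \R$; note that $c_-$ may well be negative, so one cannot hope to play the same game on $\p_-X$. The crucial observation is that $-\frac{2(n-1)}{n\ell_-} \to -\infty$ as $\ell_- \to 0^+$, so for every sufficiently small $\ell_- > 0$ one has both $\ell_- < \ell_+$ and $H(\p_-X,g) \geq c_- \geq -\frac{2(n-1)}{n\ell_-}$. For any such choice, all hypotheses of Theorem \ref{thm:b} are verified (the scalar curvature condition $\Sc(X,g)\geq 0$ is given).

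Applying Theorem \ref{thm:b} to this pair $0<\ell_-<\ell_+$ yields
$$
\wid(X,g) \;\leq\; \ell_+-\ell_- \;<\; \ell_+ \;=\; \frac{2(n-1)}{nc_+},
$$
which is precisely the desired strict inequality. I do not expect any real obstacle here: the corollary is a formal consequence of Theorem \ref{thm:b} together with the elementary blow-up of $-\frac{2(n-1)}{n\ell_-}$ as $\ell_-\to 0^+$. The strictness of the conclusion is automatic, because Theorem \ref{thm:b} requires $\ell_->0$, leaving a genuine gap $\ell_->0$ in the estimate $\ell_+-\ell_-<\ell_+$.
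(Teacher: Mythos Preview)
Your argument is correct and mirrors exactly the technique the paper demonstrates in the introduction when deriving Theorem~\ref{thm:torusband} from Theorem~\ref{thm:example}: fix $\ell_+$ so that the $\p_+X$ condition is tight, then exploit the blow-up of the $\p_-X$ bound as $\ell_-\to 0^+$ to absorb whatever lower bound $c_-$ compactness provides. The paper does not spell out a separate proof for Corollary~\ref{cor:b}, but your derivation is precisely the intended one.
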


For $0<\ell_-<\ell_+<\infty$ and $\varphi(t)= \sinh(\frac{nt}{2})^{\frac{2}{n}}$ (strictly $\log$-concave) the warped product
$$(N\times[\ell_-,\ell_+], \varphi^2(t)g_N+dt^2)$$
is a model space with scalar curvature $-n(n-1)$. Plugging this into part \eqref{itm:A} yields:

\begin{Satz}\label{thm:c}
Let $n\leq7$ and $g$ be a Riemannian metric on $X$. If
\begin{itemize}
\item[$\triangleright$] $\Sc(X,g)\geq0$
\item[$\triangleright$] $H(\p_\pm X)\geq\pm(n-1)\coth\left(\frac{n\ell_\pm}{2}\right)$ for some $0<\ell_-<\ell_+<\infty$,
\end{itemize}
then $\wid(X,g)\leq \ell_+-\ell_-$.
\end{Satz}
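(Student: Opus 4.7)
The plan is to apply part \eqref{itm:A} of the Main Theorem to the model space determined by the warping function $\varphi(t)=\sinh(nt/2)^{2/n}$ on the interval $[\ell_-,\ell_+]$, exactly as the paragraph preceding the statement suggests.

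First I would fix any closed scalar-flat Riemannian manifold $(N^{n-1},g_N)$ of the correct dimension---a flat torus $T^{n-1}$ will do---and form the warped product $(M,g_\varphi)=(N\times[\ell_-,\ell_+],\varphi^2 g_N+dt^2)$. A direct calculation gives
$$(\log\varphi)''(t)=-\frac{n}{2\sinh^{2}(nt/2)}<0,$$
so $\varphi$ is strictly $\log$-concave on $(0,\infty)$. The standard warped-product scalar-curvature formula (recalled in Section \ref{section:basics}) over the scalar-flat base $(N,g_N)$ then yields $\Sc(M,g_\varphi)=-n(n-1)$, a constant; hence $(M,g_\varphi)$ really is a model space in the sense of Definition \ref{def:model}.

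Next I would compute the boundary mean curvatures using the inner-unit-normal convention of Remark \ref{rem:convention1}. For a warped product the mean curvature of a level slice equals $\pm(n-1)\varphi'/\varphi$, the sign being determined by the orientation of the inner normal at each end. Since $\varphi'/\varphi=\coth(nt/2)$ here, one obtains
$$H(\p_\pm M,g_\varphi)=\pm(n-1)\coth(n\ell_\pm/2).$$
Combined with $\Sc(M,g_\varphi)=-n(n-1)\leq 0$, the two hypotheses of Theorem \ref{thm:c} translate directly into $\Sc(X,g)\geq\Sc(M,g_\varphi)$ and $H(\p_\pm X,g)\geq H(\p_\pm M,g_\varphi)$.

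Since $\varphi$ is strictly $\log$-concave and $X$ satisfies the standing assumption of the subsection (no closed embedded hypersurface separating $\p_-X$ and $\p_+X$ carries a PSC metric), part \eqref{itm:A} of the Main Theorem applies and gives $\wid(X,g)\leq\wid(M,g_\varphi)=\ell_+-\ell_-$. All of the real work is packed into the Main Theorem itself, whose proof uses the $\mu$-bubble machinery; for this corollary the only substantive step is the verification of the curvature identities for the chosen $\varphi$, which is routine calculus. If one insists on naming an obstacle, it is simply keeping the signs in the inner-normal mean-curvature convention consistent between $\p_-$ and $\p_+$.
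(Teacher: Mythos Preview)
Your proposal is correct and follows exactly the approach indicated in the paper: the paragraph immediately preceding Theorem~\ref{thm:c} already records that $\varphi(t)=\sinh(nt/2)^{2/n}$ is strictly $\log$-concave and yields a model space with scalar curvature $-n(n-1)$, and the theorem is stated as the result of ``plugging this into part~\eqref{itm:A}'' of the Main Theorem. Your verification of the curvature identities and the sign conventions is accurate, so nothing further is needed.
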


For $X=T^{n-1}\times[-1,1]$ the following had already been observed by Gromov \cite{Gro18b}*{Section 4}:

\begin{Corollary}\label{cor:c}
Let $n\leq7$ and $g$ be a Riemannian metric on $X$ with $\Sc(X,g)\geq-n(n-1)$. If $H(\p_+X)>n-1$, then 
\begin{displaymath}
\wid(X,g)<\frac{2}{n}\arcoth\left(\frac{1}{n-1}\inf_{x\in\p_+X}H(\p_+X,x)\right).
\end{displaymath}
\end{Corollary}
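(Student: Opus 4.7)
The plan is to deduce Corollary \ref{cor:c} from Theorem \ref{thm:c} by choosing the parameters $\ell_\pm$ appropriately, in direct analogy with the derivation of Corollary \ref{cor:b} from Theorem \ref{thm:b} (and of Theorem \ref{thm:torusband} from Theorem \ref{thm:example} above). So this should be a routine bookkeeping argument rather than a new piece of geometry.

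By compactness of $\p X$, the infima $c_\pm := \inf_{x\in\p_\pm X} H(\p_\pm X,x)$ are finite. The hypothesis $H(\p_+X) > n-1$ means $c_+ > n-1$, so $c_+/(n-1) > 1$ lies in the domain of $\arcoth$, and
$$
\ell_+^0 := \tfrac{2}{n}\arcoth\!\bigl(c_+/(n-1)\bigr) > 0
$$
is well-defined. I then set $\ell_+ := \ell_+^0$; since $\coth$ is strictly decreasing on $(0,\infty)$, this choice gives $(n-1)\coth(n\ell_+/2) = c_+$, so the required boundary bound $H(\p_+X) \geq (n-1)\coth(n\ell_+/2)$ holds. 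For the other side, observe that $-(n-1)\coth(n\ell_-/2) \to -\infty$ as $\ell_- \to 0^+$; hence I can pick $\ell_- > 0$ small enough that simultaneously $\ell_- < \ell_+^0$ and $H(\p_-X) \geq c_- \geq -(n-1)\coth(n\ell_-/2)$.

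Applying Theorem \ref{thm:c} with these choices yields $\wid(X,g) \leq \ell_+ - \ell_- < \ell_+^0 = \frac{2}{n}\arcoth\!\bigl(c_+/(n-1)\bigr)$, which is the desired strict inequality. The only mildly delicate point, and certainly not an obstacle, is that the strictness of the conclusion is forced by the constraint $\ell_- > 0$: the smallest admissible value of $\ell_+$ is exactly $\ell_+^0$, while $\ell_-$ may be taken arbitrarily close to $0$ but never equal to it, so the bound $\ell_+ - \ell_-$ is always strictly below $\ell_+^0$.
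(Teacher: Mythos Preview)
Your proposal is correct and follows exactly the pattern the paper establishes in the introduction (deriving Theorem~\ref{thm:torusband} from Theorem~\ref{thm:example}) and intends for all the corollaries in Section~\ref{section:models}: fix $\ell_+$ so that the $\p_+X$ boundary condition is met with equality at the infimum, then push $\ell_-$ toward $0^+$ using that $-(n-1)\coth(n\ell_-/2)\to-\infty$, and read off the strict inequality from $\ell_->0$. The paper does not write out this proof explicitly, but your argument is the one it implicitly invokes.
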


For $-\infty<\ell_-<\ell_+<\infty$ and $\varphi(t)=1$ ($\log$-constant) the warped product
$$(N\times[\ell_-,\ell_+], g_N+dt^2)$$
is a scalar flat model space. Plugging this into part \eqref{itm:B} yields the following result, which is probably well known to experts although we were not able to find a reference in the literature.

\begin{Satz}\label{thm:d}
Let $n\leq7$ and $g$ be a Riemannian metric on $X$. If 
\begin{itemize}
\item[$\triangleright$] $\Sc(X,g)\geq0$,
\item[$\triangleright$] $H(\p_\pm X,g)\geq0$,
\end{itemize}
then $(X,g)$ is isometric to a product $(\hat{N}\times[c,d], g_{\hat{N}}+dt^2),$ where $(\hat{N},g_{\hat{N}})$ is a closed Ricci flat Riemannian manifold.
\end{Satz}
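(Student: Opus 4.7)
The plan is to apply part \eqref{itm:B} of the Main Theorem, with essentially no extra work beyond choosing the correct model space and checking that its invariants match the hypotheses of Theorem \ref{thm:d}.

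First I would fix a closed scalar flat Riemannian manifold $(N^{n-1},g_N)$ (this is possible since, for instance, $N=T^{n-1}$ with the flat metric works) and pick any interval $[\ell_-,\ell_+]\subset\R$. Setting $\varphi(t)\equiv1$, which is trivially $\log$-constant, the warped product $(M,g_\varphi)=(N\times[\ell_-,\ell_+],\,g_N+dt^2)$ is simply a Riemannian product. Standard warped product formulas (to be recalled in Section \ref{section:basics}) give $\Sc(M,g_\varphi)=\Sc(N,g_N)=0$, so $(M,g_\varphi)$ is a \emph{model space} in the sense of Definition \ref{def:model}. Because $\varphi'\equiv 0$, the boundary slices $N\times\{\ell_\pm\}$ are totally geodesic, hence $H(\p_\pm M,g_\varphi)=0$.

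Next I would verify the two inequalities required by the Main Theorem with this choice: $\Sc(X,g)\geq 0=\Sc(M,g_\varphi)$ by assumption, and $H(\p_\pm X,g)\geq 0=H(\p_\pm M,g_\varphi)$ also by assumption. Since $\varphi$ is $\log$-constant, part \eqref{itm:B} of the Main Theorem directly yields that $(X,g)$ is isometric to a warped product
\[
\bigl(\hat N\times[c,d],\,\varphi^2 g_{\hat N}+dt^2\bigr)=\bigl(\hat N\times[c,d],\,g_{\hat N}+dt^2\bigr),
\]
where $(\hat N,g_{\hat N})$ is closed and Ricci flat. This is exactly the conclusion.

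There is essentially no obstacle specific to Theorem \ref{thm:d}: the entire content of the argument is contained in part \eqref{itm:B} of the Main Theorem, whose proof (to be carried out later) is the genuinely substantive step. The only thing to watch here is that the hypothesis on $X$ (no separating hypersurface admits a PSC metric) is a standing assumption for this subsection, so it is available for free, and that the choice of $[\ell_-,\ell_+]$ is irrelevant because in the $\log$-constant case the Main Theorem does not pin down the width of $X$ but only identifies the isometry type of $(X,g)$ with a product over a Ricci flat base.
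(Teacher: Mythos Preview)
Your proposal is correct and matches the paper's own derivation exactly: the paper obtains Theorem \ref{thm:d} by taking $\varphi\equiv1$ (which is $\log$-constant) over a scalar flat base, noting that the resulting model space has $\Sc(M,g_\varphi)=0$ and $H(\p_\pm M,g_\varphi)=0$, and then invoking part \eqref{itm:B} of the Main Theorem. Your observations about the standing hypothesis on $X$ and the irrelevance of the choice of $[\ell_-,\ell_+]$ are also in line with the paper.
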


For $-\infty<\ell_-<\ell_+<\infty$ and $\varphi(t)= \exp(t)$ ($\log$-constant) the warped product
$$(N\times[\ell_-,\ell_+], \varphi^2(t)g_N+dt^2)$$
is a model space with scalar curvature $-n(n-1)$. Plugging this into part \ref{itm:B} yields:

\begin{Satz}\label{thm:e}
Let $n\leq7$ and $g$ be a Riemannian metric on $X$. If 
\begin{itemize}
\item[$\triangleright$] $\Sc(X,g)\geq-n(n-1)$,
\item[$\triangleright$] $H(\p_\pm X,g)\geq\pm (n-1)$,
\end{itemize}
then $(X,g)$ is isometric to a warped product $(\hat{N}\times[c,d], \exp(2t)g_{\hat{N}}+dt^2),$ where $(\hat{N},g_{\hat{N}})$ is a closed Ricci flat Riemannian manifold.
\end{Satz}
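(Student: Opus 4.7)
My plan is to deduce Theorem \ref{thm:e} as a direct application of part \eqref{itm:B} of the Main Theorem, taking $\varphi(t) = \exp(t)$ as the warping function of the comparison model. The first step is to fix any closed scalar-flat Riemannian manifold $(N, g_N)$ of dimension $n-1$ (for instance the flat torus $T^{n-1}$), choose any interval $[\ell_-, \ell_+] \subset \R$, and form the warped product $(M, g_\varphi) = (N \times [\ell_-, \ell_+], \varphi^2(t) g_N + dt^2)$.

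Next, I would verify that $(M, g_\varphi)$ qualifies as a model space in the sense of Definition \ref{def:model}. Since $(\log\varphi)''(t) \equiv 0$, the function $\varphi = \exp(t)$ is log-constant. Using the warped-product formulas recalled in Section \ref{section:basics} together with $\varphi'/\varphi = \varphi''/\varphi \equiv 1$ and the scalar-flatness of $N$, the scalar curvature of $M$ simplifies to the constant value $-2(n-1) - (n-1)(n-2) = -n(n-1)$. A short computation of the second fundamental form of the slice $N \times \{t\}$, using the inward unit normal convention of Remark \ref{rem:convention1}, gives $H(\p_\pm M, g_\varphi) = \pm (n-1)\,\varphi'/\varphi \equiv \pm (n-1)$.

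With these identifications in place, the hypotheses $\Sc(X, g) \geq -n(n-1)$ and $H(\p_\pm X, g) \geq \pm (n-1)$ of Theorem \ref{thm:e} coincide with the comparison hypotheses $\Sc(X, g) \geq \Sc(M, g_\varphi)$ and $H(\p_\pm X, g) \geq H(\p_\pm M, g_\varphi)$ required by the Main Theorem. Part \eqref{itm:B} then furnishes an isometry of $(X, g)$ with a warped product $(\hat{N} \times [c,d], \exp(2t)\, g_{\hat{N}} + dt^2)$ over a closed Ricci-flat base $(\hat{N}, g_{\hat{N}})$, which is exactly the desired conclusion.

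Since all the analytic content has been absorbed into the Main Theorem, there is no genuine obstacle in this deduction; the one point requiring attention is the sign convention for the mean curvatures of warped-product slices, so that one verifies $H(\p_\pm M, g_\varphi) = \pm (n-1)$ with the correct signs rather than their opposites.
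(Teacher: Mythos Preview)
Your proposal is correct and follows exactly the paper's own approach: the paper simply notes that for $\varphi(t)=\exp(t)$ the warped product over a scalar-flat base is a $\log$-constant model space with $\Sc(M,g_\varphi)=-n(n-1)$ and $H(\p_\pm M,g_\varphi)=\pm(n-1)$, and then invokes part \eqref{itm:B} of the Main Theorem.
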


\begin{Bemerkung}\label{rem:PMT}
Special versions of Theorem \ref{thm:e} already appear in \cite{Gro96}*{Section 5$\frac{5}{6}$,p. 57-58} and in \cite{Gro18}*{Section 9}, where its relation to the hyperbolic positive mass theorem is explained. Li proved a cubical version of Theorem \ref{thm:e} in \cite{Li20}*{Theorem 1.3}.
\end{Bemerkung}

\subsection{Topological Results}\label{section:examples} The results of the previous subsection apply to oriented bands $X$ with the property that no closed embedded hypersurface $\Sigma$ which separates $\p_-X$ and $\p_+X$ admits a metric with positive scalar curvature.
Gromov provides a list of examples for such bands in \cite{Gro19}*{Section 3.6}, which we expand significantly. 
In particular we prove the following optimal result for trivial bands in dimension $n\geq6$ (see Section \ref{section:topology}).

\begin{Proposition}\label{prop:cobordismalt}
Let $n\geq6$ and $Y^{n-1}$ be a closed connected oriented manifold which does not admit a metric with positive scalar curvature.
If $X=Y\times[-1,1]$, then no closed embedded hypersurface $\Sigma$ which separates $\p_-X$ and $\p_+X$ admits a metric with positive scalar curvature.
\end{Proposition}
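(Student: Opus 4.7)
The plan is to argue by contraposition: assuming that some closed embedded separating hypersurface $\Sigma\subset X=Y\times[-1,1]$ admits a metric with positive scalar curvature, I would construct such a metric on $Y$, contradicting the hypothesis. First I would reduce to the case where $X\setminus\Sigma$ has only two pieces: letting $W_-$ be the closure of the union of connected components of $X\setminus\Sigma$ meeting $\p_-X$, and setting $\Sigma_-=\p W_-\setminus\p_-X$, one sees that $\Sigma_-$ is a union of components of $\Sigma$, still separates $\p_\pm X$, and inherits positive scalar curvature from $\Sigma$. Replacing $\Sigma$ by $\Sigma_-$ I may assume $X\setminus\Sigma=W_-\sqcup W_+$ with $\p W_\pm=\p_\pm X\sqcup\Sigma$; in particular $W_-$ is a connected compact cobordism from $Y$ to $\Sigma$, embedded in $X$.

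The heart of the argument is to modify $\Sigma$ by a sequence of ambient codimension-$\geq 3$ surgeries inside $X$ (each preserving the separating property) so that the resulting cobordism $W_-'$ from $Y$ to the new hypersurface $\Sigma'$ admits a handle decomposition on $Y$ using handles of index $\geq 3$ only. An ambient codimension-$(n-k)$ surgery on $\Sigma$ corresponds to attaching or removing a $k$-handle to $W_-$ on the $\Sigma$ side---equivalently, an $(n-k)$-handle on the $Y$ side---and preserves positive scalar curvature on $\Sigma$ by the Gromov--Lawson surgery theorem. Exploiting that $\dim W_-=n\geq 6$ together with the retraction $X\to Y$, which makes $Y\hookrightarrow W_-$ split injective on homotopy, I would apply Smale-type handle trading in dimension $\geq 6$ to trade the $1$- and $2$-handles in a $Y$-side decomposition of $W_-$ for $3$- and $4$-handles, producing the desired $\Sigma'$.

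Once this is achieved, $W_-'$ viewed from the $\Sigma'$ side is obtained from $\Sigma'\times[0,1]$ by attaching handles of index $\leq n-3$, and the Gromov--Lawson theorem on extending positive scalar curvature across such cobordisms (in the form due to Gajer and Rosenberg--Stolz) yields a positive scalar curvature metric on $W_-'$ that is a product near $Y$; restricting to $Y$ gives the contradiction. The main obstacle is precisely the handle-trading step, and the hypothesis $n\geq 6$ is essential there: Smale's handle cancellation and trading machinery for cobordisms requires dimension at least $6$, and this is exactly what allows the conversion of all low-index handles into index-$\geq 3$ ones by means of the ambient codim-$\geq 3$ surgeries on $\Sigma$ permitted by Gromov--Lawson.
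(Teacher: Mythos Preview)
Your overall strategy---contraposition, taking one side of $X\setminus\Sigma$ as a cobordism $W$ from $Y$ to $\Sigma$ equipped with the retraction $r:W\to Y$ coming from the projection $X\to Y$, then invoking Gromov--Lawson to push positive scalar curvature across $W$---is exactly the route the paper takes. The gap is in your handle-trading step.

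You assert that split injectivity of $\iota:Y\hookrightarrow W_-$ on homotopy (guaranteed by the retraction) suffices to trade all $1$- and $2$-handles in a $Y$-side decomposition for higher-index handles. It does not. Wall's handle cancellation requires the pair $(W_-,Y)$ to be $2$-connected, i.e.\ $\iota_*$ an isomorphism on $\pi_1$ and a surjection on $\pi_2$; split injectivity is strictly weaker. There is no reason for $\pi_1(W_-)$ to equal $\pi_1(Y)$ a priori: for instance, $W_-$ could be $Y\times[-1,0]$ with an ambient $1$-handle attached inside $Y\times[0,1]$, giving $\pi_1(W_-)\cong\pi_1(Y)\ast\Z$. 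In that situation there is \emph{no} handle decomposition of $W_-$ on $Y$ without $1$-handles, and your trading cannot begin. Nor have you argued why the required modifications can be realised as \emph{ambient} codimension-$\geq 3$ surgeries on $\Sigma$ inside $X$; the bounding disks you would need lie on the wrong side.

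The paper avoids this entirely. After first reducing (via a Lefschetz-duality intersection argument) to a single connected separating component $\Sigma_0$, it performs \emph{abstract} surgery in the interior of $W$---not ambient in $X$, and leaving both boundary components $Y$ and $\Sigma_0$ untouched---to kill $\ker(r_*)$ on $\pi_1$ and on $\pi_2$. These are surgeries on embedded circles and $2$-spheres in $\mathring W$ with trivial normal bundle (the $\pi_2$ step needs a stable normal-bundle check and a finiteness argument). The result is a new cobordism $W_2$ from $Y$ to the \emph{same} $\Sigma_0$, still carrying a retraction $r_2:W_2\to Y$, now $3$-connected; hence $\iota:Y\hookrightarrow W_2$ is $2$-connected, and only at this point does the Gromov--Lawson cobordism argument apply. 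The conceptual point you are missing is that the cobordism need not remain embedded in $X$: once that constraint is dropped, the problem becomes standard surgery below the middle dimension over the target $Y$.
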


In the spin setting we recall an observation by Zeidler \cites{RZ19, RZ20}.

\begin{Proposition}[\cites{RZ19, RZ20}]\label{prop:rosenberg}
Let $n\geq2$ and $Y^{n-1}$ be a closed connected oriented spin manifold with Rosenberg index $\alpha(Y)\neq0\in KO_{n-1}(C^*\pi_1Y)$.
If $X=Y\times[-1,1]$, then no closed embedded hypersurface $\Sigma$ which separates $\p_-X$ and $\p_+X$ admits a metric with positive scalar curvature.
\end{Proposition}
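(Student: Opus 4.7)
The plan is to argue by contradiction and transport the Rosenberg-index obstruction from $Y$ to a separating hypersurface $\Sigma$. By hypothesis, $\alpha(Y)\neq 0\in KO_{n-1}(C^*\pi_1 Y)$, and the Lichnerowicz--Rosenberg vanishing theorem says that a metric of positive scalar curvature on $\Sigma$ forces $\alpha(\Sigma,f)=0$ for every reference map $f\colon\Sigma\to B\Gamma$. The task is therefore to exhibit a reference map $f\colon\Sigma\to B\pi_1 Y$ for which $\alpha(\Sigma,f)=\alpha(Y)$, and the geometric input is that in $X=Y\times[-1,1]$ a separating hypersurface automatically bounds a bordism to $Y$ over $B\pi_1 Y$.

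To produce this bordism, let $U$ be the union of those connected components of $X\setminus\Sigma$ that meet $\partial_-X$, and set $W:=\overline{U}$. Then $W$ is a compact codimension-zero submanifold of $X$, disjoint from $\partial_+X$, with $\partial W=\partial_-X\sqcup\Sigma'$, where $\Sigma'\subseteq\Sigma$ is the union of those components of $\Sigma$ with at least one side in $U$. Since $X\setminus W$ is open, contains $\partial_+X$, and has $\Sigma'$ as its frontier with $W$, the hypersurface $\Sigma'$ still separates $\partial_\pm X$; as a metric of positive scalar curvature on $\Sigma$ restricts to one on $\Sigma'$, I may replace $\Sigma$ by $\Sigma'$ and assume $\partial W=\partial_-X\sqcup\Sigma$. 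In particular, $\Sigma$ is two-sided in the oriented spin manifold $X$, hence inherits an orientation and a spin structure. Using the projection $\pr\colon X\to Y$ and the classifying map $c_Y\colon Y\to B\pi_1 Y$, set $f:=c_Y\circ\pr|_\Sigma\colon\Sigma\to B\pi_1 Y$. Then $W$, equipped with $c_Y\circ\pr|_W$, is a spin bordism over $B\pi_1 Y$ from $(Y,c_Y)$ to $(\Sigma,f)$, so bordism invariance of the Rosenberg index yields
\[
\alpha(\Sigma,f)=\alpha(Y,c_Y)=\alpha(Y)\neq 0.
\]
If $\Sigma$ admitted a metric of positive scalar curvature, the vanishing theorem would force $\alpha(\Sigma,f)=0$, contradicting the above.

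The main obstacle is the topological cleanup in the second paragraph: extracting from a general separating hypersurface a clean cobordism whose manifold boundary is precisely $\partial_-X\sqcup\Sigma$ requires discarding components of $\Sigma$ that have the same ambient component of $X\setminus\Sigma$ on both sides, and then verifying that the remaining hypersurface is still separating and that its spin and reference-map data extend across $W$. Once this is arranged, the remainder of the argument is a routine application of bordism invariance and the Lichnerowicz--Rosenberg obstruction.
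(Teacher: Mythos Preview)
Your argument is correct and is essentially the paper's own proof: both construct a spin bordism $W$ from $Y=\partial_-X$ to (a subset of) $\Sigma$ inside $X$, equip it with the reference map to $B\pi_1Y$ coming from the projection $X\to Y$ (equivalently, pull back the Mi\v{s}\v{c}enko bundle), and then combine bordism invariance of the Rosenberg index with the Lichnerowicz--Weitzenb\"ock vanishing argument. The only cosmetic difference is that the paper first invokes Lemma~\ref{lem:sep} to pass to a single connected separating component $\Sigma_0$ before taking the bordism, whereas you obtain $W$ directly as the closure of the components of $X\setminus\Sigma$ meeting $\partial_-X$ (note that your $\Sigma'$ should consist of the components with \emph{exactly} one side in $U$, not ``at least one'').
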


Furthermore we consider a class of bands which are not necessarily trivial.

\begin{Definition}\label{def:NPSC}
A closed connected oriented manifold $Y^{n-1}$ is called $\NPSC$ if it can not be dominated by a manifold which admits a metric with positive scalar curvature.
In other words: if $Z^{n-1}$ is a closed oriented manifold and there exists a continuous map $f:Z\rightarrow Y$ with $\deg(f)\neq0$, then $Z$ does not admit a metric with positive scalar curvature.
\end{Definition} 

\begin{Definition}\label{def:overNPSC}
A connected oriented band $X^n$ is called over-$\NPSC$ if there is a $\NPSC$-manifold $Y^{n-1}$ and a band map $f:X\to Y\times [-1,1]$ with $\deg(f)\neq0$.
\end{Definition}

\begin{Proposition}\label{prop:NPSC}
Let $n\geq3$ and $X$ be a connected oriented over-$\NPSC$ band. 
Then no closed embedded hypersurface $\Sigma$ which separates $\p_-X$ and $\p_+X$ admits a metric with positive scalar curvature.
\end{Proposition}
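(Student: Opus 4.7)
The plan is to produce, from the over-$\NPSC$ structure on $X$, a continuous map $h : \Sigma \to Y$ of nonzero degree for every separating hypersurface $\Sigma$; the $\NPSC$ hypothesis on $Y$ then rules out a positive scalar curvature metric on $\Sigma$. By assumption there is a band map $f : X \to Y \times [-1,1]$ with $\deg(f) = d \neq 0$. Writing $\pi_Y : Y \times [-1,1] \to Y$ for the projection, the natural candidate is
$$h := \pi_Y \circ f|_\Sigma : \Sigma \longrightarrow Y.$$
Note that $\Sigma$ is orientable, since the decomposition $X \setminus \Sigma = X_- \sqcup X_+$ with $\p_\pm X \subset X_\pm$ trivializes its normal bundle in the oriented manifold $X$; orienting $\Sigma$ accordingly makes $\deg(h)$ well-defined.

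The central step is to verify $\deg(h) \neq 0$. After smoothly approximating $f$ within the class of band maps (which is possible since the band-map condition only constrains $f$ near $\p X$) and perturbing $\Sigma$ to achieve transversality, I would argue via Stokes' theorem. The closure $\overline{X_-}$ is a compact oriented cobordism inside $X$ between $\p_- X$ and $\Sigma$ (with appropriate boundary orientations), and for any top-dimensional volume form $\omega$ on $Y$ the pullback $(\pi_Y \circ f)^*\omega$ is closed on $X$ because $\dim Y = n - 1$. Applying Stokes' theorem on $\overline{X_-}$ yields
$$\deg(h)\cdot\vol(Y) = \int_\Sigma h^*\omega = \pm\int_{\p_- X}(\pi_Y \circ f|_{\p_- X})^*\omega = \pm\deg\bigl(\pi_Y \circ f|_{\p_- X}\bigr)\cdot\vol(Y).$$

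It remains to identify $\deg(\pi_Y \circ f|_{\p_- X}) = \pm d$. This is the standard statement that the restriction of a degree-$d$ map of compact oriented $n$-manifolds with boundary to a boundary component has degree $\pm d$ onto the corresponding target boundary component; concretely, one applies the connecting homomorphism in the long exact sequence of the pair $(Y \times [-1,1],\, Y \times \p[-1,1])$ to the relation $f_*[X, \p X] = d\cdot[Y \times [-1,1],\, Y \times \p[-1,1]]$. Composing with $\pi_Y$ preserves the degree, so $\deg(h) = \pm d \neq 0$, and the $\NPSC$ property of $Y$ applied to $h$ precludes a positive scalar curvature metric on $\Sigma$. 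The principal obstacle is essentially orientation bookkeeping—choosing compatible orientations on $\Sigma$, $\overline{X_-}$, and $\p_- X$ so that the Stokes identity and the boundary-degree identity fit together—but since only the nonvanishing of $\deg(h)$ is needed, every sign ambiguity is harmless.
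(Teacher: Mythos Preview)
Your argument is correct and coincides with the paper's route: the proposition is deduced from Lemma~\ref{lem:degsep} (the paper's one-line proof cites Lemma~\ref{lem:sep}, evidently a slip, since that lemma only treats trivial bands), and the proof of Lemma~\ref{lem:degsep} is precisely the Lefschetz-duality version of your Stokes/cobordism computation---pass to the region bounded by $\p_-X$ and (part of) $\Sigma$, and read off that $(\pr_Y\circ f)_*$ sends its boundary class to $d[Y]$.

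One point to tighten: the clean splitting $X\setminus\Sigma=X_-\sqcup X_+$ with $\Sigma$ as the full interface need not exist. The hypersurface $\Sigma$ may carry one-sided (hence nonorientable) components, or two-sided components both of whose sides lie in the same piece; such components end up in the \emph{interior} of $\overline{X_-}$ or $\overline{X_+}$, not on its boundary. The remedy---which the paper also performs in its proof of Lemma~\ref{lem:degsep}---is to set $X_-$ equal to the union of components of $X\setminus\Sigma$ meeting $\p_-X$ and replace $\Sigma$ by $\Sigma'=\p\overline{X_-}\cap\mathring{X}\subset\Sigma$. Your Stokes identity then yields $\deg(h|_{\Sigma'})=\pm d\neq0$, so some component $\Sigma_0\subset\Sigma'\subset\Sigma$ maps to $Y$ with nonzero degree, and the $\NPSC$ hypothesis finishes the argument. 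As you anticipated, only nonvanishing of the degree is used, so the sign bookkeeping is harmless.
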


\begin{Bemerkung}\label{rem:NPSC}
The two classical examples of $\NPSC$-manifolds one should have in mind are \emph{enlargeable} manifolds (compare \cite{GroLa83}*{Theorem 5.8}, \cite{CeSch18}*{Theorem  A} and \cite{GroLa83}*{Proposition 5.7}) as well as \emph{Schoen-Yau-Schick} manifolds (compare \cite{SY79}*{Theorem 1}, \cite{TS98} and \cite{ChLi20}*{Definition 23}).
\end{Bemerkung}

Chodosh and Li \cite{ChLi20}*{Theorem 2} and Gromov \cite{Gro20}*{Section 7} used $\mu$-bubbles to prove that closed aspherical manifolds of dimension $\leq 5$ do not admit metrics with positive scalar curvature. 
We implement Gromov's approach from \cite{Gro19} in the language of \cite{ChLi20} and present a proof of the following in Section \ref{section:last}

\begin{Satz}\label{prop:deg}
All closed connected oriented aspherical 4-manifolds are $\NPSC$.
\end{Satz}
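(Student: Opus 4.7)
The plan is to adapt the $\mu$-bubble proof of Chodosh--Li that closed aspherical 4-manifolds admit no PSC metric (\cite{ChLi20}*{Theorem 2}) to the relative setting of a degree-nonzero map onto $Y$. Assume for contradiction that $Z^4$ is a closed oriented $4$-manifold carrying a metric $g$ with $\Sc(Z,g)>0$ and that $f\colon Z\to Y$ is continuous with $\deg(f)\neq 0$. First I would form the pull-back cover $\pi\colon\tilde Z\to Z$ obtained by pulling back the universal cover $\tilde Y\to Y$ along $f$, so that $f$ lifts to a $\pi_1(Y)$-equivariant proper map $\tilde f\colon\tilde Z\to\tilde Y$ of degree $\deg(f)$. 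The lifted metric $\tilde g$ is complete and still satisfies $\Sc(\tilde g)>0$. Since $Y$ is aspherical with infinite fundamental group, $\tilde Y$ is contractible and non-compact with a cocompact isometric $\pi_1(Y)$-action; fixing $q\in\tilde Y$, the function $\rho(x):=\dist_{\tilde Y}(\tilde f(x),q)$ is a proper exhaustion of $\tilde Z$.

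Next I would apply the $\mu$-bubble construction inside a thick slab $\{R-\delta<\rho<R+\delta\}\subset\tilde Z$ for large $R$, with a warping function $h$ that blows up to $\pm\infty$ at the two endpoints. Minimizing the weighted perimeter
\[
\mathcal{A}^{h}(\Omega)=\Ha^{3}(\partial^{*}\Omega)-\int_{\Omega}h\,d\Ha^{4}
\]
among Caccioppoli sets $\Omega\supset\{\rho\leq R-\delta\}$ produces, since $\dim\tilde Z=4<7$, a smooth closed regular hypersurface $\Sigma^{3}=\partial^{*}\Omega$. The stability inequality combined with $\Sc(\tilde g)>0$ gives, after a conformal change following Gromov \cite{Gro19}*{Section 5.2}, a metric of positive scalar curvature on each connected component of $\Sigma$. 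By Gromov--Lawson and Perelman each such component is therefore a finite connect sum of spherical space forms and copies of $S^{2}\times S^{1}$.

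The final, and most delicate, step is to extract a topological contradiction from the non-zero degree of $\tilde f$. Since $\Omega\supset\{\rho\leq R-\delta\}$ and $\tilde f(\Sigma)$ avoids $B_{R-\delta}(q)$, the cycle $\tilde f_{*}[\Sigma]$ equals $\deg(f)$ times the generator of $H_{3}(\tilde Y\setminus B_{R-\delta}(q);\Z)\cong\Z$ (the isomorphism follows from contractibility of $\tilde Y$ by excision). Running the $\mu$-bubble construction once more inside $\Sigma^{3}$, using a proper function on $\Sigma$ pulled back from $\tilde Y$ that records a codimension-one linking direction around $q$, produces an embedded closed surface $\Sigma'\subset\Sigma$ of positive scalar curvature, i.e.\ a disjoint union of $2$-spheres. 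Tracking the degree through the two reductions, at least one component of $\Sigma'$ must carry the non-trivial linking number with $q$; but any map $S^{2}\to\tilde Y$ is null-homotopic and hence has zero linking number, a contradiction. The main obstacle is precisely this bookkeeping---propagating the degree and linking information through two successive weighted minimal-hypersurface problems---which is where Gromov's approach from \cite{Gro19} has to be carefully implemented in the language of \cite{ChLi20}.
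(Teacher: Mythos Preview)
Your overall architecture---pass to the pull-back cover $\hat Z\to\tilde Y$ via a proper lift of degree $\deg(f)$, run a two-step $\mu$-bubble reduction, and derive a contradiction from a linking/filling obstruction in the contractible $\tilde Y$---is the same as the paper's. But the way you set up the geometry has a genuine gap that makes the second reduction collapse.

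You organize the first $\mu$-bubble around distance to a \emph{point} $q\in\tilde Y$. After that step you have a closed $3$-manifold $\Sigma^3$ with $\tilde f_*[\Sigma]$ a nonzero multiple of the generator of $H_3(\tilde Y\setminus B_{R-\delta}(q);\Z)\cong\Z$. So far so good. The problem is the next sentence: there is no ``codimension-one linking direction around $q$'' in a contractible $4$-manifold. The complement of a point (or a ball) has $H_2=0$, so there is no $2$-dimensional linking class for your $2$-spheres to represent; and $\Sigma$ is \emph{closed}, so there is no proper function on $\Sigma$ pulled back from $\tilde Y$ that blows up at two ends and would let you run a second $\mu$-bubble. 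Your appeal to Perelman's classification of PSC $3$-manifolds does not help here: knowing that the components of $\Sigma$ are connected sums of space forms and $S^2\times S^1$ gives no diameter control and no band structure with which to continue. The ``bookkeeping'' you flag as delicate is in fact not possible in this setup.

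The paper avoids this by linking not with a point but with a geodesic \emph{line} $\gamma\subset\tilde Y$, which has infinite filling radius. The Codim~$2$ Linking Lemma produces, for any $\sigma>0$, a compact band $M\subset\tilde Y$ of width $>2\pi/\sqrt\sigma$ together with a band class $\alpha\in H_{2}(M;\Z)$ such that every cycle representing a nonzero multiple of $\alpha$ is linked with $\gamma$ and hence has filling radius $>2\pi/\sqrt\sigma$. One then pulls $M$ back to a band $M'\subset\hat Z$ (still of width $>2\pi/\sqrt\sigma$, since $\hat f$ is made distance-decreasing), and a Poincar\'e--Lefschetz duality argument transfers $\alpha$ to a band class $\alpha'\in H_2(M';\Z)$ with $\hat f_*\alpha'=\deg(f)\,\alpha$. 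The two reductions are then carried out \emph{inside} $M'$ in a single package: first a free-boundary area-minimizing hypersurface $B^3$ in the relative class $\beta$ with $\partial\beta=\alpha^+-\alpha^-$ (so $B$ is itself a band of width $>2\pi/\sqrt\sigma$), and then a \emph{warped} $\mu$-bubble on $B$, weighted by the first eigenfunction $u>0$ of the stability operator of $B$. This yields a surface $\Sigma^2$ representing $\alpha'$ whose components are $2$-spheres with an explicit \emph{diameter} bound $\diam(\Sigma_0)\leq \pi\sqrt{2/(R_Z-\sigma)}$. The contradiction is then quantitative: $\hat f(\Sigma)$ represents $\deg(f)\,\alpha$ and, by uniform contractibility of $\tilde Y$, has filling radius at most $C\bigl(\pi\sqrt{2/(R_Z-\sigma)}\bigr)$, which for $\sigma$ small violates the lower bound $2\pi/\sqrt\sigma$.

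In short: replace the point $q$ by a geodesic line, build the band in $\tilde Y$ \emph{before} pulling back, and carry out the two steps as a free-boundary minimizer followed by a warped $\mu$-bubble with diameter control. The Perelman step can be dropped; what matters is the diameter bound on the $2$-spheres together with uniform contractibility of $\tilde Y$.
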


\begin{Bemerkung}
In the first arXiv version of this article there was a mistake in the proof of Theorem \ref{prop:deg}, which was pointed out to us by Otis Chodosh and Chao Li (the missing piece was Proposition \ref{prop:fix}).
In subsequent joint work with Yevgeny Liokumovich they classified sufficiently connected manifolds in dimension $4$ and $5$ which admits a positive scalar curvature metric. Their result implies Theorem \ref{prop:deg} as well (see \cite{ChLiL21}*{Theorem 3}).
\end{Bemerkung}

Combined with Theorem \ref{thm:a} or Theorem \ref{thm:groband} we establish the following two results towards Gromov's band width conjecture \cite{Gro18}*{11.12, Conjecture C}.

\begin{Corollary}\label{cor:Rosenberg}
Let $n\in\{2,3,4,6,7\}$ and $Y^{n-1}$ be a closed connected oriented manifold which does not admit a metric with positive scalar curvature.
If $X=Y\times[-1,1]$ and $g$ is a Riemannian metric on $X$ with $\Sc(X,g)\geq n(n-1)$, then $\wid(X,g)<\frac{2\pi}{n}$.
\end{Corollary}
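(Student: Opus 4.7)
The plan is simply to combine the geometric width bound of Theorem~\ref{thm:groband} with the topological input of Proposition~\ref{prop:cobordismalt}. Theorem~\ref{thm:groband} applies to any oriented band $X^n$ of dimension $n\leq 7$ in which no closed embedded hypersurface separating $\p_- X$ and $\p_+ X$ admits a metric with positive scalar curvature, and produces the strict inequality $\wid(X,g)<\tfrac{2\pi}{n}$ whenever $\Sc(X,g)\geq n(n-1)$. Thus the only task is to verify that the trivial band $X=Y\times[-1,1]$ satisfies this hypersurface hypothesis.

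First I would observe that $X$ is an oriented band with $\p_\pm X=Y\times\{\pm 1\}$, and that the dimension $n\in\{6,7\}$ lies in the intersection of both admissible ranges: $n\leq 7$ (required by Theorem~\ref{thm:groband}) and $n\geq 6$ (required by Proposition~\ref{prop:cobordismalt}). Since by assumption $Y^{n-1}$ is closed, connected, oriented, and carries no metric of positive scalar curvature, Proposition~\ref{prop:cobordismalt} applies directly and guarantees that no closed embedded hypersurface $\Sigma\subset X$ separating $\p_- X$ from $\p_+ X$ admits a positive scalar curvature metric. With the topological hypothesis thus verified, Theorem~\ref{thm:groband} yields $\wid(X,g)<\tfrac{2\pi}{n}$, which is the conclusion.

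The only nontrivial step is the verification of the hypersurface condition via Proposition~\ref{prop:cobordismalt}, and that is where the real work resides (it is proved elsewhere in the paper using surgery-type arguments). In this corollary itself there is no estimate to produce by hand: the dimension restriction to $\{6,7\}$ is dictated precisely by the intersection of the $\mu$-bubble regularity constraint $n\leq 7$ in Theorem~\ref{thm:groband} and the dimension-$\geq 6$ range of Proposition~\ref{prop:cobordismalt}. In these two dimensions the geometric and topological halves of the argument fit together exactly to deliver the optimal band width estimate for trivial bands over arbitrary closed connected oriented bases not admitting positive scalar curvature.
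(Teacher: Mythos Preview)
Your proposal is correct and matches the paper's own argument: the corollary is stated immediately after Proposition~\ref{prop:cobordismalt} and is explicitly presented as the combination of that proposition with Theorem~\ref{thm:a} or Theorem~\ref{thm:groband}, exactly as you describe. The dimension restriction $n\in\{6,7\}$ arises, as you note, from intersecting the $n\geq 6$ hypothesis of Proposition~\ref{prop:cobordismalt} with the $n\leq 7$ regularity constraint in Theorem~\ref{thm:groband}.
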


\begin{Bemerkung}\label{rem:Rosenberg}
We point out that Corollary \ref{rem:Rosenberg} implies the $S^1$-stability conjecture of Rosenberg \cite{Ro07}*{Conjecture 1.24} for closed connected orientable manifolds of dimension 5 or 6.
\end{Bemerkung}

\begin{Corollary}\label{cor:aspherical}
Let $Y^4$ be a closed connected aspherical manifold.
If $X=Y\times[-1,1]$ and $g$ is a Riemannian metric on $X$ with $\Sc(X,g)\geq n(n-1)$, then $\wid(X,g)<\frac{2\pi}{n}$.
\end{Corollary}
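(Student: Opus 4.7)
The plan is to chain together the topological input of Theorem \ref{prop:deg} with the comparison result of Theorem \ref{thm:groband}. Here $\dim X = 5$, so $n = 5 \leq 7$, and the hypothesis $\Sc(X,g) \geq n(n-1)$ is in exactly the form needed for Theorem \ref{thm:groband}; the only thing to verify is the topological separation hypothesis on $X = Y \times [-1,1]$.

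First I would invoke Theorem \ref{prop:deg} to conclude that the closed connected oriented aspherical 4-manifold $Y$ is $\NPSC$. Then $X = Y \times [-1,1]$ is trivially an over-$\NPSC$ band: the identity map $X \to Y \times [-1,1]$ is a band map of degree one, so Definition \ref{def:overNPSC} is satisfied with $Y$ itself as the target base. Since $n = 5 \geq 3$, Proposition \ref{prop:NPSC} applies and tells us that no closed embedded hypersurface $\Sigma \subset X$ separating $\partial_- X$ from $\partial_+ X$ can admit a metric of positive scalar curvature.

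With that topological condition in place, Theorem \ref{thm:groband} applies directly to $(X,g)$ in dimension $n = 5$, yielding
\[
\wid(X,g) < \frac{2\pi}{n} = \frac{2\pi}{5},
\]
which is the desired estimate.

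There is no real technical obstacle here: the whole content of the corollary sits in the aspherical-4-manifold ingredient (Theorem \ref{prop:deg}), which has already been established, and in the width estimate for oriented $\NPSC$-separating bands (Theorem \ref{thm:groband}). The role of the corollary is to record the combination. The one small point to be careful about is checking the dimension hypotheses: $n = 5$ is at least $3$ (for Proposition \ref{prop:NPSC}) and at most $7$ (for Theorem \ref{thm:groband}), so both inputs apply without modification.
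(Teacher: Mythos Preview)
Your argument is essentially the one the paper intends: the text preceding Corollary~\ref{cor:aspherical} says it follows by combining Theorem~\ref{prop:deg} with Theorem~\ref{thm:a} or Theorem~\ref{thm:groband}, and your chain Theorem~\ref{prop:deg} $\Rightarrow$ $Y$ is $\NPSC$ $\Rightarrow$ $X$ is over-$\NPSC$ $\Rightarrow$ Proposition~\ref{prop:NPSC} $\Rightarrow$ Theorem~\ref{thm:groband} is exactly that.

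One small gap: you call $Y$ ``the closed connected \emph{oriented} aspherical 4-manifold,'' but the corollary does not assume orientability, and Theorem~\ref{prop:deg}, Proposition~\ref{prop:NPSC}, and Theorem~\ref{thm:groband} all require an oriented band. The paper handles this in the remark immediately following the corollary: if $Y$ is nonorientable, pass to the orientable double cover $\widetilde{Y}$, which is still closed, connected, and aspherical; the induced cover $\widetilde{X}=\widetilde{Y}\times[-1,1]$ of $X$ with the pulled-back metric still satisfies $\Sc\geq n(n-1)$ and has $\wid(\widetilde{X},\widetilde{g})\geq\wid(X,g)$, so the bound on $\widetilde{X}$ yields the bound on $X$. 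You should insert this reduction at the start.
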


\begin{Bemerkung}
In Corollary \ref{cor:aspherical} the manifold $Y$ can be nonorientable. In that case we simply pass to the orientable double cover, which is a closed connected aspherical manifold as well.
\end{Bemerkung}

\subsection{Acknowledgements.} This is part my ongoing doctoral dissertation project at Augsburg University. I am grateful to my advisor Bernhard Hanke for his continued support, Johannes Ebert for his expertise regarding Proposition \ref{prop:cobordism} and Jan Metzger for answering my questions concerning maximum principles. I thank Rudolf Zeidler, Simone Cecchini and Georg Frenck for helpful comments. This work was supported by a doctoral grant from the German Academic Scholarship Foundation.

\section{Warped products} \label{section:warpedproducts}
In this section we recall some facts about warped products and develop a general framework for scalar and mean curvature comparison of Riemannian bands.

\subsection{Basics}\label{section:basics} The following definitions and formulas are standard knowledge.

\begin{Definition}\label{def:warpedproduct}
Let $(N,g_N)$ be a closed Riemannian manifold and $\varphi: (a,b)\rightarrow \R_+$ be a smooth positive function. The warped product over $(N,g_N)$ with warping  function $\varphi$ is
$$(M,g_\varphi):=\left(N\times(a,b),\varphi^2g_N+ dt^2\right).$$
\end{Definition}

The scalar curvature of $(M,g_\varphi)$ is determined by the scalar curvature of $(N,g_N)$ and the warping function $\varphi$. 
The following formula
\begin{equation}\label{eq:scwarped}
\Sc(M,g_\varphi)(p,t)=\frac{1}{\varphi^2(t)}\Sc(N,g_N)(p)-2(n-1)\frac{\varphi''(t)}{\varphi(t)}-(n-1)(n-2)\left(\frac{\varphi'(t)}{\varphi(t)}\right)^2
\end{equation}
is obtained by a straightforward calculation (see also \cite{Gro19}*{Section 2.4}).\\

If we denote $N_t:=N\times\{t\}$ for $t\in(a,b)$ and consider $N_t$ as the boundary of $N\times(a,t]$, then its second fundamental form with respect to the inner unit normal vector field is a diagonal matrix whose entries are all equal to $$\frac{d}{dt}\log(\varphi)(t)=\frac{\varphi'(t)}{\varphi(t)}.$$ 
It follows that $N_t$ is what is called an umbilic hypersurface and its mean curvature is given by
\begin{equation}\label{eq:meancurv}
H(N_t)=(n-1)\frac{\varphi'(t)}{\varphi(t)}=:h_\varphi(t).
\end{equation}
Finally, we rearrange the formula \eqref{eq:scwarped} for the scalar curvature of a warped product in terms of $h_\varphi$:
\begin{equation}\label{eq:warpedODE}
Sc(M,g_\varphi)(p,t)+\frac{n}{n-1}h_\varphi(t)^2+2h_\varphi'(t)=\frac{1}{\varphi^2(t)}Sc(N,g_N)(p).
\end{equation}
This formula, which combines information on scalar and mean curvature of a warped product, is the basis on which we build a comparison principle.

\subsection{Comparison of two Warped Products} As a first step we want to compare two warped products $(M_1,g_{\varphi_1})$ and $(M_2,g_{\varphi_2})$ over the same base manifold $(N,g_N)$. We start with the simplest case, where $\varphi_1$ and $\varphi_2$ have the same domain and $(N,g_N)$ is scalar flat.

\begin{Proposition}\label{prop:basecase}
Let $(N,g_N)$ be a closed scalar flat Riemannian manifold.
Let $\varphi_1:[a,b]\to\R_+$ and $\varphi_2:[a,b]\to \R_+$ be two positive functions. 
If
\begin{itemize}
\item[$\triangleright$] $\Sc(M,g_{\varphi_1})\geq\Sc(M,g_{\varphi_2})$,
\item[$\triangleright$] $H(\p_\pm M,g_{\varphi_1})\geq H(\p_\pm M,g_{\varphi_2})$,
\end{itemize}
then $h_{\varphi_1}=h_{\varphi_2}$ ie equality holds in both conditions.
\end{Proposition}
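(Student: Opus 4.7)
The plan is to reduce the statement to a first-order differential inequality for the function $u=h_{\varphi_1}-h_{\varphi_2}$ on $[a,b]$ and then use an integrating factor together with the boundary conditions to force $u\equiv 0$.

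First, I would feed the scalar-flat hypothesis on $(N,g_N)$ into formula \eqref{eq:warpedODE}. For each $i\in\{1,2\}$ this gives the pointwise identity
\[
\Sc(M,g_{\varphi_i})=-\tfrac{n}{n-1}\,h_{\varphi_i}(t)^2-2\,h_{\varphi_i}'(t),
\]
so that the scalar curvature depends only on $t$. The scalar curvature hypothesis then translates into
\[
-\tfrac{n}{n-1}h_{\varphi_1}^2-2h_{\varphi_1}'\;\geq\;-\tfrac{n}{n-1}h_{\varphi_2}^2-2h_{\varphi_2}',
\]
which after rearrangement becomes
\[
u'(t)+c(t)\,u(t)\;\leq\;0,\qquad c(t):=\tfrac{n}{2(n-1)}\bigl(h_{\varphi_1}(t)+h_{\varphi_2}(t)\bigr).
\]

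Next I would translate the boundary hypotheses. According to \eqref{eq:meancurv} and the convention on the inner unit normal, one has $H(\partial_+ M,g_{\varphi_i})=h_{\varphi_i}(b)$ and $H(\partial_- M,g_{\varphi_i})=-h_{\varphi_i}(a)$. The assumed mean curvature inequalities therefore read
\[
u(a)\leq 0\quad\text{and}\quad u(b)\geq 0.
\]

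Finally I would apply the integrating factor $F(t):=\exp\!\bigl(\int_a^t c(s)\,ds\bigr)>0$. A direct computation gives $(Fu)'=F\,(u'+cu)\leq 0$, so $Fu$ is non-increasing on $[a,b]$. Combined with $(Fu)(a)\leq 0$ and $(Fu)(b)\geq 0$, this forces
\[
0\geq (Fu)(a)\geq (Fu)(t)\geq (Fu)(b)\geq 0
\]
for every $t\in[a,b]$, hence $Fu\equiv 0$ and therefore $u\equiv 0$, i.e.\ $h_{\varphi_1}\equiv h_{\varphi_2}$. Equality in both original conditions is then automatic. There is no serious obstacle here: the whole statement is a one-dimensional ODE comparison result, and the only thing to be careful about is getting the sign conventions for $H(\partial_\pm M,g_{\varphi_i})$ right so that the boundary values of $u$ line up the way the integrating factor needs them to.
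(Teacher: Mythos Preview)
Your proof is correct. The translation of the hypotheses into the inequality
\[
\tfrac{n}{n-1}h_{\varphi_1}^2+2h_{\varphi_1}'\leq \tfrac{n}{n-1}h_{\varphi_2}^2+2h_{\varphi_2}',\qquad h_{\varphi_1}(a)\leq h_{\varphi_2}(a),\quad h_{\varphi_1}(b)\geq h_{\varphi_2}(b),
\]
is identical to what the paper does; the difference lies in how this inequality is resolved. The paper isolates this step as a separate lemma (Lemma~\ref{lem:warpedODE1}) and proves it by locating a zero $t_0$ of $f=h_{\varphi_2}-h_{\varphi_1}$, showing inductively that all derivatives $f^{(k)}(t_0)$ vanish, invoking the Taylor expansion to conclude $f\equiv 0$ on a neighbourhood of $t_0$, and then running an open--closed argument. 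Your route is different and more economical: you factor $h_{\varphi_1}^2-h_{\varphi_2}^2=u(h_{\varphi_1}+h_{\varphi_2})$ to obtain a \emph{linear} differential inequality $u'+cu\leq 0$ for $u=h_{\varphi_1}-h_{\varphi_2}$, and then the integrating factor plus the two boundary signs finishes the job in one line. This is a genuinely simpler argument, and it has the advantage of not relying on any analyticity-type reasoning (the paper's passage from ``all derivatives vanish at $t_0$'' to ``$f\equiv 0$ near $t_0$'' is delicate for merely smooth functions). On the other hand, the paper's formulation of Lemma~\ref{lem:warpedODE1} is stated abstractly for two functions $h_1,h_2$ and is reused later (Propositions~\ref{prop:warpedODE3} and~\ref{prop:warpedODE2}); your integrating-factor argument applies equally well in those situations.
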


Even though the content of Proposition \ref{prop:basecase} is geometric, its proof is purely analytic and based  on the following:

\begin{Lemma}\label{lem:warpedODE1}
Let $\varphi_1:[a,b]\to\R_+$ and $\varphi_2:[a,b]\to\R_+$ be two smooth positive functions.
Then $h_{\varphi_1}= h_{\varphi_2}$ if and only if
\begin{itemize}
\item[$\triangleright$] $\frac{n}{n-1}h_{\varphi_1}^2+2h_{\varphi_1}'\leq \frac{n}{n-1}h_{\varphi_2}^2+2h_{\varphi_2}'$,
\item[$\triangleright$]  $h_{\varphi_1}(a)\leq h_{\varphi_2}(a)$ and $h_{\varphi_1}(b)\geq h_{\varphi_2}(b)$.
\end{itemize}
\end{Lemma}

\begin{proof}
The idea is to reduce the statement to a comparison result for the Riccati equation which can be found in \cite[Lemma 4.1]{WB16}.
Consider $\hat{\varphi}_i(t)=\varphi_i\left(2\sqrt{\frac{n-1}{n}}t\right)^{\frac{n}{2}}$ as functions $[\hat{a},\hat{b}]\to\R_+$ where $\hat{a}:=\frac{a\sqrt{n}}{2\sqrt{n-1}}$ and $\hat{b}:=\frac{b\sqrt{n}}{2\sqrt{n-1}}$.
We denote
$$\hat{h}_i(t):=\frac{\hat{\varphi}_i'(t)}{\hat{\varphi}_i(t)}=\sqrt{\frac{n}{n-1}}(n-1)\frac{\varphi_i'\left(2\sqrt{\frac{n-1}{n}}t\right)}{\varphi_i\left(2\sqrt{\frac{n-1}{n}}t\right)}=\sqrt{\frac{n}{n-1}}h_{\varphi_i}\left(2\sqrt{\frac{n-1}{n}}t\right).$$
Then 
$$\hat{h}_i^2(t)+\hat{h}_i'(t)=\frac{n}{n-1}h_{\varphi_i}^2\left(2\sqrt{\frac{n-1}{n}}t\right)+2h_{\varphi_i}'\left(2\sqrt{\frac{n-1}{n}}t\right).$$
Furthermore, if we denote $\kappa_i:=-\hat{h}_i^2(t)-\hat{h}_i'(t)$, we see that $\kappa_2\leq\kappa_1$ and 
$$\hat{\varphi}_i''(t)+\kappa_i\hat{\varphi}_i(t)=0.$$
At this point we are in the situation where we can apply \cite[Lemma 4.1]{WB16} to conclude.

For the convenience of the reader, we repeat the proof here.
Hence
\begin{displaymath}
\begin{split}
0&=\int_{\hat{a}}^t\hat{\varphi}_1(\hat{\varphi}_2''+\kappa_2\hat{\varphi}_2)-(\hat{\varphi}_1''+\kappa_1\hat{\varphi}_1)\hat{\varphi}_2\\
&=(\hat{\varphi}_1\hat{\varphi}_2'-\hat{\varphi}_1'\hat{\varphi}_2)\bigr|_{\hat{a}}^t+\int_{\hat{a}}^t(\kappa_2-\kappa_1)\hat{\varphi}_1\hat{\varphi}_2
\end{split}
\end{displaymath}
and therefore
\begin{equation}\label{eq:integral}\hat{\varphi}_1(t)\hat{\varphi}_2'(t)-\hat{\varphi}_1'(t)\hat{\varphi}_2(t)=\hat{\varphi}_1(\hat{a})\hat{\varphi}_2'(\hat{a})-\hat{\varphi}_1'(\hat{a})\hat{\varphi}_2(\hat{a})+\int_{\hat{a}}^t(\kappa_1-\kappa_2)\hat{\varphi}_1\hat{\varphi}_2.\end{equation}
Now $\hat{\varphi}_1(\hat{a})\hat{\varphi}_2'(\hat{a})-\hat{\varphi}_1'(\hat{a})\hat{\varphi}_2(\hat{a})\geq0$ since $\hat{h}_1(\hat{a})\leq\hat{h}_2(\hat{a})$ and the second term on the right hand side is nonnegative since $\kappa_2\leq\kappa_1$ and $\hat{\varphi}_1\hat{\varphi}_2>0$.
It follows that 
\begin{equation}\label{eq:inequalityode}
\hat{\varphi}_1(t)\hat{\varphi}_2'(t)-\hat{\varphi}_1'(t)\hat{\varphi}_2(t)\geq0\Leftrightarrow\frac{\hat{\varphi}_1'(t)}{\hat{\varphi}_1(t)}\leq\frac{\hat{\varphi}_2'(t)}{\hat{\varphi}_2(t)}\Leftrightarrow \hat{h}_1(t)\leq\hat{h}_2(t)
\end{equation}
for all $t\in[\hat{a},\hat{b}]$.
By \eqref{eq:integral} $\hat{h}_1(\hat{a})=\hat{h}_2(\hat{a})$ if equality holds at $t$.
We can replace $\hat{a}$ by any $t_0\in[\hat{a},t]$ in the argument above since $\hat{h}_1(t_0)\leq\hat{h}_2(t_0)$.
Hence $\hat{h}_1=\hat{h}_2$ on $[\hat{a},t]$ if equality holds at $t$.
Since $\hat{h}_1(\hat{b})\geq\hat{h}_2(\hat{b})$ by assumption and $\hat{h}_1(\hat{b})\leq\hat{h}_2(\hat{b})$ by \eqref{eq:inequalityode}, equality holds at $\hat{b}$. 
Hence $\hat{h}_1=\hat{h}_2$ on $[\hat{a},\hat{b}]$ and therefore $h_{\varphi_1}=h_{\varphi_2}$ on $[a,b]$.\end{proof}

\begin{proof}[Proof of Proposition \ref{prop:basecase}]
Since $(N,g_N)$ is scalar flat \eqref{eq:warpedODE} implies
\begin{displaymath}
\begin{split}
\frac{n}{n-1}h_{\varphi_1}^2(t)+2h_{\varphi_1}'(t)&=-\Sc(M,g_{\varphi_1})(p,t)\\
&\leq-\Sc(M,g_{\varphi_2})(p,t)=\frac{n}{n-1}h_{\varphi_2}^2(t)+2h_{\varphi_2}'(t).
\end{split}
\end{displaymath}
Furthermore we have
$$h_{\varphi_1}(a)=-H(\p_-M,g_{\varphi_1})\leq-H(\p_-M,g_{\varphi_2})= h_{\varphi_2}(a)$$
and
$$h_{\varphi_1}(b)=H(\p_+M,g_{\varphi_1})\geq H(\p_+M,g_{\varphi_2})= h_{\varphi_2}(a)$$
by \eqref{eq:meancurv}. Thus $h_{\varphi_1}=h_{\varphi_2}$ by Lemma \ref{lem:warpedODE1}.
\end{proof}

Next, we allow the warping functions to have different domains.
Let $(N,g_N)$ be a closed scalar flat Riemannian manifold and $\varphi_1:[a,b]\to \R_+$ and $\varphi_2:[c,d]\to\R_+$ two positive functions.

To compare the scalar curvature of the warped products $(M_1,g_{\varphi_1})$ and $(M_2,g_{\varphi_2})$ pointwise, we need to choose a band map $\Phi:M_1\to M_2$.
In this setting the canonical choice is $\Phi={\rm id}_N\times\phi$, where $\phi:[a,b]\to[c,d]$ is given by $t\mapsto \frac{(d-c)}{(b-a)}(t-a)+c$.

To prove a comparison result like Proposition \ref{prop:basecase} we want to apply Lemma \ref{lem:warpedODE1} to the functions $h_{\varphi_1}$ and $\ti{h}_{\varphi_2}=h_{\varphi_2}\circ\phi=h_{\ti{\varphi}_2}$ where $$\ti{\varphi}_2:[a,b]\to\R_+ \ \ t\mapsto\varphi_2(\phi(t))^{\frac{b-a}{d-c}}.$$
Hence we need to ensure that $\Sc(M_1,g_{\varphi_1})(p,t)\geq\Sc(M_2,g_{\varphi_2})(p,\phi(t))$ implies  
$$
\frac{n}{n-1}h_{\varphi_1}^2(t)+2h_{\varphi_1}'(t)\leq\frac{n}{n-1}\ti{h}_{\varphi_2}^2(t)+2\ti{h}_{\varphi_2}'(t).
$$
for all $t\in[a,b]$. By \eqref{eq:warpedODE} this works if
$$h'_{\varphi_2}(\phi(t))\leq\ti{h}_{\varphi_2}'(t)=h'_{\varphi_2}(\phi(t))\phi'(t),$$
which in turn holds true if $h'_{\varphi_2}(\phi(t))=0$ or $h'_{\varphi_2}(\phi(t))<0$ and $\phi'(t)\leq1$ ie $b-a\geq d-c$.

For this reason we consider strictly $\log$-concave or $\log$-constant warping functions in our comparison results.

\begin{Proposition}\label{prop:warpedODE3}
Let $(N,g_N)$ be a scalar flat Riemannian manifold. Let $\varphi_1:[a,b]\to\R_+$ and $\varphi_2:[c,d]\to \R_+$ be two positive functions. 
Consider the warped products $(M_1,g_{\varphi_1})$ and $(M_2,g_{\varphi_2})$ and the map $\phi:[a,b]\to[c,d]$ given by $t\mapsto \frac{(d-c)}{(b-a)}(t-a)+c$.
If $\varphi_2$ is $\log$-constant,
\begin{itemize}
\item[$\triangleright$] $\Sc(M_1,g_{\varphi_1})(p,t)\geq\Sc(M_2,g_{\varphi_2})(p,\phi(t))$,
\item[$\triangleright$] $H(\p M_1,g_{\varphi_1})\geq H(\p M_2,g_{\varphi_2})$,
\end{itemize}
then $h_{\varphi_1}=h_{\varphi_2}\circ\phi$ ie equality holds in both conditions.
\end{Proposition}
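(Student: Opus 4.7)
The plan is to mimic the proof of Proposition \ref{prop:basecase} with $h_1=h_{\varphi_1}$ and $h_2=\ti{h}_{\varphi_2}:=h_{\varphi_2}\circ\phi$, so that the conclusion reduces to a direct application of Lemma \ref{lem:warpedODE1} on the interval $[a,b]$. The extra ingredient compared with Proposition \ref{prop:basecase}, isolated in the discussion immediately preceding the statement, is that reparametrising $h_{\varphi_2}$ by $\phi$ produces a factor $\phi'(t)=(d-c)/(b-a)$ in $\ti{h}_{\varphi_2}'(t)=h_{\varphi_2}'(\phi(t))\phi'(t)$, and one must control the sign of $h_{\varphi_2}'(\phi(t))$. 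The log-constant hypothesis on $\varphi_2$ kills this term entirely.

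More precisely, $\varphi_2$ being $\log$-constant means $(\varphi_2'/\varphi_2)'\equiv 0$, so $h_{\varphi_2}=(n-1)\varphi_2'/\varphi_2$ is a constant function on $[c,d]$; hence $\ti{h}_{\varphi_2}$ is constant on $[a,b]$ and $\ti{h}_{\varphi_2}'\equiv 0$, regardless of the affine reparametrisation. Combining \eqref{eq:warpedODE} with the scalar flatness of $(N,g_N)$ for both warped products, the pointwise scalar curvature hypothesis rewrites as
$$\frac{n}{n-1}h_{\varphi_1}^2(t)+2h_{\varphi_1}'(t)\leq\frac{n}{n-1}h_{\varphi_2}^2(\phi(t))=\frac{n}{n-1}\ti{h}_{\varphi_2}^2(t)+2\ti{h}_{\varphi_2}'(t),$$
which is exactly the differential hypothesis of Lemma \ref{lem:warpedODE1}.

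For the boundary conditions I use \eqref{eq:meancurv} together with the sign convention of Remark \ref{rem:convention1}, exactly as in the proof of Proposition \ref{prop:basecase}, to translate $H(\p_\pm M_1,g_{\varphi_1})\geq H(\p_\pm M_2,g_{\varphi_2})$ into $h_{\varphi_1}(a)\leq\ti{h}_{\varphi_2}(a)$ and $h_{\varphi_1}(b)\geq\ti{h}_{\varphi_2}(b)$. Lemma \ref{lem:warpedODE1} then yields $h_{\varphi_1}\equiv\ti{h}_{\varphi_2}=h_{\varphi_2}\circ\phi$, and feeding this equality back into \eqref{eq:warpedODE} and \eqref{eq:meancurv} shows that both hypotheses in fact hold with equality, as claimed.

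I do not anticipate any genuine obstacle: the proposition is essentially a bookkeeping application of Lemma \ref{lem:warpedODE1}, with the $\log$-constant hypothesis doing exactly the work flagged in the paragraph preceding the statement. The only point demanding a little care is the direction of the inequalities produced by $h_{\varphi}(a)=-H(\p_-M,g_\varphi)$ and $h_{\varphi}(b)=H(\p_+M,g_\varphi)$, which already caused no trouble in the proof of Proposition \ref{prop:basecase}.
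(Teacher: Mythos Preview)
Your proposal is correct and follows essentially the same route as the paper: define $\ti{h}_{\varphi_2}=h_{\varphi_2}\circ\phi$, use the $\log$-constant hypothesis to make $\ti{h}_{\varphi_2}'\equiv 0$ so that \eqref{eq:warpedODE} turns the scalar curvature assumption into the differential hypothesis of Lemma~\ref{lem:warpedODE1}, read off the boundary inequalities from \eqref{eq:meancurv}, and apply the lemma. The paper's proof is slightly terser but structurally identical.
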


\begin{proof}[Proof of Proposition \ref{prop:warpedODE3}] 
Denote $\ti{h}_{\varphi_2}=h_{\varphi_2}\circ\phi:[a,b]\to \R$.
By \eqref{eq:warpedODE}
$$\Sc(M_1,g_{\varphi_1})(p,t)=-\frac{n}{n-1}h_{\varphi_1}^2(t)-2h_{\varphi_1}'(t)$$
as well as
$$\Sc(M_2,g_{\varphi_2})(p,\phi(t))=-\frac{n}{n-1}h_{\varphi_2}^2(\phi(t))-2h_{\varphi_2}'(\phi(t))=-\frac{n}{n-1}\ti{h}_{\varphi_2}^2(t),$$
since $\varphi_2$ is assumed to be $\log$-constant ie $h'_{\varphi_2}=0$. Hence 
\begin{displaymath}
\frac{n}{n-1}h_{\varphi_1}^2(t)+2h_{\varphi_1}'(t)\leq\frac{n}{n-1}\ti{h}_{\varphi_2}^2(t)=\frac{n}{n-1}\ti{h}_{\varphi_2}^2(t)+2\ti{h}'_{\varphi_2}(t).
\end{displaymath}
Furthermore $h_{\varphi_1}(a)\leq\ti{h}_{\varphi_2}(a)$ and $h_{\varphi_1}(b)\geq \ti{h}_{\varphi_2}(b)$ (this follows from \eqref{eq:meancurv} and the assumption on mean curvature).
Now Lemma \ref{lem:warpedODE1} implies $h_{\varphi_1}=\ti{h}_{\varphi_2}$.\end{proof}

\begin{Proposition}\label{prop:warpedODE2}
Let $(N,g_N)$ be a scalar flat Riemannian manifold. Let $\varphi_1:[a,b]\to\R_+$ and $\varphi_2:[c,d]\to \R_+$ be two positive functions. Consider the warped products $(M_1,g_{\varphi_1})$ and $(M_2,g_{\varphi_2})$ and the map $\phi:[a,b]\to[c,d]$ given by $t\mapsto \frac{(d-c)}{(b-a)}(t-a)+c$.
If $\varphi_2$ is strictly $\log$-concave,
\begin{itemize}
\item[$\triangleright$] $\Sc(M_1,g_{\varphi_1})(p,t)\geq\Sc(M_2,g_{\varphi_2})(p,\phi(t))$,
\item[$\triangleright$] $H(\p M_1,g_{\varphi_1})\geq H(\p M_2,g_{\varphi_2})$,
\item[$\triangleright$] $\wid(M_1,g_{\varphi_1})\geq\wid(M_2,g_{\varphi_2})$,
\end{itemize}
then $b-a=d-c$ and $h_{\varphi_1}=h_{\varphi_2}\circ\phi$ ie equality holds in all three conditions.
\end{Proposition}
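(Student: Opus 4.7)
The plan is to follow exactly the same template as the proof of Proposition \ref{prop:warpedODE3}, but to use the new hypothesis $\wid(M_1,g_{\varphi_1}) \geq \wid(M_2,g_{\varphi_2})$ in order to compensate for the fact that $\varphi_2$ is now only strictly log-concave instead of log-constant. As before, set $\ti{h}_{\varphi_2} = h_{\varphi_2}\circ\phi : [a,b]\to\R$ and observe that $\phi$ is affine with constant slope $\phi'\equiv(d-c)/(b-a)$, so $\ti{h}_{\varphi_2}'(t) = \phi'\cdot h_{\varphi_2}'(\phi(t))$. The width condition translates into $\phi'\leq1$.

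First I would rewrite the scalar curvature inequality via \eqref{eq:warpedODE}: since $(N,g_N)$ is scalar flat this gives
\begin{equation*}
\frac{n}{n-1}h_{\varphi_1}^2(t) + 2h_{\varphi_1}'(t) = -\Sc(M_1,g_{\varphi_1})(p,t) \leq -\Sc(M_2,g_{\varphi_2})(p,\phi(t)) = \frac{n}{n-1}\ti{h}_{\varphi_2}^2(t) + 2h_{\varphi_2}'(\phi(t)).
\end{equation*}
Now strict log-concavity of $\varphi_2$ means $h_{\varphi_2}'<0$ everywhere, so $h_{\varphi_2}'(\phi(t))\leq 0$. Combined with $\phi'\leq 1$ this yields $h_{\varphi_2}'(\phi(t))\leq \phi'\cdot h_{\varphi_2}'(\phi(t)) = \ti{h}_{\varphi_2}'(t)$. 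Substituting gives
\begin{equation*}
\frac{n}{n-1}h_{\varphi_1}^2(t) + 2h_{\varphi_1}'(t) \leq \frac{n}{n-1}\ti{h}_{\varphi_2}^2(t) + 2\ti{h}_{\varphi_2}'(t).
\end{equation*}
The boundary mean curvature assumptions together with \eqref{eq:meancurv} give $h_{\varphi_1}(a)\leq\ti{h}_{\varphi_2}(a)$ and $h_{\varphi_1}(b)\geq\ti{h}_{\varphi_2}(b)$. Lemma \ref{lem:warpedODE1} then produces $h_{\varphi_1} = \ti{h}_{\varphi_2}$ on $[a,b]$.

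For the width rigidity, I would revisit the chain of inequalities above. Since $h_{\varphi_1}=\ti{h}_{\varphi_2}$, in particular $h_{\varphi_1}'=\ti{h}_{\varphi_2}'$ and $h_{\varphi_1}^2 = \ti{h}_{\varphi_2}^2$, so the leftmost and rightmost expressions agree. This forces both intermediate inequalities to be equalities; in particular
\begin{equation*}
2h_{\varphi_2}'(\phi(t)) = 2\ti{h}_{\varphi_2}'(t) = 2\phi'\cdot h_{\varphi_2}'(\phi(t)),
\end{equation*}
so $(1-\phi')\cdot h_{\varphi_2}'(\phi(t)) = 0$ for every $t$. Because $h_{\varphi_2}'<0$ everywhere (strict log-concavity), we must have $\phi'=1$, i.e.\ $b-a = d-c$. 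Equality in the scalar curvature inequality is then automatic, and the two mean curvature equalities follow from $h_{\varphi_1}(a)=\ti{h}_{\varphi_2}(a)$ and $h_{\varphi_1}(b)=\ti{h}_{\varphi_2}(b)$.

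No serious obstacle is expected — this is a routine adaptation of Proposition \ref{prop:warpedODE3}. The one point that requires some care is the direction of the inequality $h_{\varphi_2}'(\phi(t))\leq\ti{h}_{\varphi_2}'(t)$: it relies crucially on the sign $h_{\varphi_2}'\leq0$ (from log-concavity) together with the slope bound $\phi'\leq1$ (from the width assumption), and a sign error would reverse the whole argument. The only genuinely new input beyond the proof of Proposition \ref{prop:warpedODE3} is the strictness, which is used exactly once, to convert $(1-\phi')h_{\varphi_2}'\circ\phi=0$ into $\phi'=1$.
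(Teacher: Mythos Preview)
Your proposal is correct and follows essentially the same approach as the paper's proof: both establish the chain of inequalities via \eqref{eq:warpedODE}, the log-concavity of $\varphi_2$, and the bound $\phi'\leq 1$, then invoke Lemma~\ref{lem:warpedODE1}, and finally use strict log-concavity to force $\phi'=1$. The only cosmetic difference is that the paper phrases the rigidity step as a contradiction (if $\phi'<1$ the second inequality would be strict, contradicting $h_{\varphi_1}=\ti{h}_{\varphi_2}$), whereas you revisit the equality case directly---these are the same argument.
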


\begin{proof}[Proof of Proposition \ref{prop:warpedODE2}]
Denote $\ti{h}_{\varphi_2}=h_{\varphi_2}\circ\phi:[a,b]\to \R$.
As before:
\begin{equation}\label{eq:conditionlemma}
\begin{split}
\frac{n}{n-1}h_{\varphi_1}^2(t)+2h_{\varphi_1}'(t)&\leq\frac{n}{n-1}h_{\varphi_2}^2(\phi(t))+2h_{\varphi_2}'(\phi(t))\\
&\leq\frac{n}{n-1}\ti{h}_{\varphi_2}^2(t)+2\ti{h}_{\varphi_2}'(t),
\end{split}
\end{equation}
where we used that $\varphi_2$ is $\log$-concave and $\phi$ is 1-Lipschitz for the last inequality.

Furthermore $h_{\varphi_1}(a)\leq\ti{h}_{\varphi_2}(a)$ and $h_{\varphi_1}(b)\geq \ti{h}_{\varphi_2}(b)$.
If $b-a>d-c$ ie $\phi$ is strictly 1-Lipschitz, the last inequality in \eqref{eq:conditionlemma} would be strict since $h_{\varphi_2}'<0$.
This is impossible because Lemma \ref{lem:warpedODE1} implies $h_{\varphi_1}=\ti{h}_{\varphi_2}$.
\end{proof}

The main goal of this article is to generalize Proposition \ref{prop:warpedODE3} and Proposition \ref{prop:warpedODE2} to allow for the comparison of Riemannian bands with warped products over closed Riemannian manifolds with constant scalar curvature.

\subsection{General Comparison and Structural Maps}
A (pointwise) scalar and mean curvature comparison of two Riemannian bands $(X,g)$ and $(V,\tau)$ involves a choice of band map $\Phi:X\to V$.
The choice does not matter, if $\Sc(V,\tau)$ is constant and $H(\p V,\tau)$ is constant on $\p_-V$ resp. $\p_+V$.

If $(V,\tau)$ is a warped product $(M,g_\varphi)$ over a closed Riemannian manifold $(N,g_N)$ with warping function $\varphi:[a,b]\to\R_+$, the second condition is satisfied as the mean curvature of $\p M=\p_-M\sqcup\p_+M$ with respect to $g_\varphi$ is constant equal to $\pm h_\varphi(a)$ on $\p_\pm(M)$ (see \eqref{eq:meancurv}).

Furthermore, if $\Sc(N,g_N)$ is constant, then $\Sc(M,g_\varphi)(p,t)$ only depends on the $t$-coordinate (see \eqref{eq:warpedODE}) and therefore the scalar curvature comparison between $(X,g)$ and $(M,g_\varphi)$ only depends on $\phi:=\pr_{[a,b]}\circ\Phi:X\to [a,b]$.

For the rest of this article we focus on this situation ie if not further specified $(X^n,g)$ will be a Riemannian band, $(N^{n-1},g_N)$ will be a closed Riemannian manifold with constant scalar curvature and $(M^n,g_\varphi)$ will be a warped product over $(N,g_N)$ with warping function $\varphi:[a,b]\to\R_+$.
To compare $(X,g)$ and $(M,g_\varphi)$ we fix a point $p_0\in N$, choose a band map $\phi:X\to[a,b]$ and define $\Phi:X\to M$ by $x\mapsto (p_0,\phi(x))$.\\

While every choice of $\phi$ enables us to compare the scalar and mean curvature of $(X,g)$ and $(M,g_\varphi)$ pointwise, we need $\phi$ to preserve some geometric structure to prove a comparison result like Proposition \ref{prop:warpedODE3} or Proposition \ref{prop:warpedODE2}.
We denote $h=h_\varphi\circ\phi:X\to \R$ and consider a 'pullback' version of equation \eqref{eq:warpedODE} on $(X,g)$.

\begin{Definition}\label{def:structural}
A band map $\phi:X\to[a,b]$, which is used to compare $(X,g)$ and $(M,g_\varphi)$, is called \emph{structural} if it is smooth and for any smooth hypersurface $\Sigma$ with outward unit normal field $\nu$ which separates $\p_-X$ and $\p_+X$ in $X$  the inequality
\begin{equation}\label{eq:structural}
\Sc(X,g)(x)+\frac{n}{n-1}h^2(x)+2g(\nabla h(x),\nu(x))\geq \frac{1}{\varphi^2(\phi(x))}Sc(N,g_N)
\end{equation}
holds at all points $x\in \Sigma$. 
\end{Definition}


In Section \ref{section:bubbles} we will use $\mu$-bubbles to prove the following Proposition, which will act as a replacement for Lemma \ref{lem:warpedODE1} in the general case:

\begin{Proposition}\label{prop:comparison1}
Let $n\leq7$ and $(X,g)$ be an oriented Riemannian band. Let $(N,g_N)$ be a closed oriented Riemannian manifold with constant scalar curvature and $(M,g_\varphi)$ the warped product over $(N,g_N)$ with warping function $\varphi:[a,b]\to\R_+$.
If there is a structural band map $\phi:X\to[a,b]$ and 
$$H(\p_\pm X,g)>H(\p_\pm M,g_\varphi),$$
there is a hypersurface $\Sigma\subset X$, which separates $\p_-X$ and $\p_+X$ in $X$ with:
$$-\Delta_{\Sigma}+\frac{1}{2}\Sc(\Sigma,g)\geq\frac{1}{2\varphi^2(\phi)}Sc(N,g_N).$$
\end{Proposition}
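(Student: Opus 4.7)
The plan is to construct $\Sigma$ as the smooth boundary of a $\mu$-bubble. Fix the prescribing function $h := h_\varphi \circ \phi$ on $X$. On Caccioppoli sets $\Omega \subset X$ containing a collar of $\partial_- X$ and disjoint from a collar of $\partial_+ X$, consider the functional
$$\mathcal{A}(\Omega) := \mathcal{H}^{n-1}\bigl(\partial^*\Omega \cap (X\setminus\partial X)\bigr) - \int_\Omega h\, d\mathcal{H}^n.$$
By \eqref{eq:meancurv} one has $H(\partial_\pm M, g_\varphi) = \pm h|_{\partial_\pm M}$, so the strict boundary assumption says exactly that $\partial_\pm X$ are strict barriers for critical surfaces of $\mathcal{A}$. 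Via the standard $\mu$-bubble existence machinery (for instance by first replacing $h$ with a perturbation that diverges at $\partial X$ with the correct sign, minimizing, and then passing to the limit while using the strict mean curvature gap to keep the minimizing region compactly inside $X\setminus\partial X$), one obtains a minimizer $\Omega$ whose reduced boundary sits at positive distance from $\partial X$. Since $n\leq 7$, codimension-one minimizing boundaries are smooth, so $\Sigma := \partial^*\Omega \cap (X\setminus\partial X)$ is a smooth closed embedded hypersurface separating $\partial_- X$ from $\partial_+ X$. Making this existence and interior-regularity step rigorous is the principal technical obstacle; with $\Sigma$ in hand the rest is algebraic manipulation of the variation formulas.

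Denote by $\nu$ the unit normal to $\Sigma$ pointing out of $\Omega$ (towards $\partial_+ X$, which matches the $+\partial_t$ direction in the warped product model and hence the $\nu$ of Definition \ref{def:structural}) and by $A$ the second fundamental form of $\Sigma$ in $(X,g)$. The first variation of $\mathcal{A}$ gives the Euler--Lagrange equation $H_\Sigma = h|_\Sigma$, while the second variation yields the stability inequality
$$\int_\Sigma |\nabla u|^2 \,\geq\, \int_\Sigma \bigl(|A|^2 + \operatorname{Ric}_X(\nu,\nu) + g(\nabla h, \nu)\bigr)\, u^2$$
for every $u \in C^\infty(\Sigma)$. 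Substituting the traced Gauss equation $2\operatorname{Ric}_X(\nu,\nu) = \Sc(X,g) - \Sc(\Sigma,g) + H_\Sigma^2 - |A|^2$, the sharp Cauchy--Schwarz bound $|A|^2 \geq H_\Sigma^2/(n-1)$, and $H_\Sigma = h|_\Sigma$ rewrites this as
$$\int_\Sigma \Bigl(|\nabla u|^2 + \tfrac{1}{2}\Sc(\Sigma,g)\, u^2\Bigr) \,\geq\, \int_\Sigma \Bigl(\tfrac{1}{2}\Sc(X,g) + \tfrac{n}{2(n-1)}\, h^2 + g(\nabla h, \nu)\Bigr) u^2.$$
The integrand on the right is bounded below pointwise by $\tfrac{1}{2\varphi^2(\phi)}\Sc(N,g_N)$ by the structural inequality \eqref{eq:structural} applied on $\Sigma$, which is precisely the quadratic form reading of the operator inequality claimed.
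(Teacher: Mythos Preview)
Your proof is correct and follows essentially the same approach as the paper: construct $\Sigma$ as the boundary of an $\mathcal{A}_h$-minimizer, use the first and second variation together with the traced Gauss equation and $|A|^2\geq H^2/(n-1)$ to obtain the key inequality \eqref{eq:key}, and then invoke the structural inequality \eqref{eq:structural}. The only cosmetic difference is in the existence step, where the paper proves the barrier property directly via the divergence theorem (Lemma~\ref{lem:exmean}) rather than by perturbing $h$ to blow up at $\partial X$ and passing to a limit.
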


\begin{Bemerkung}\label{rem:weakness}
From a conceptual perspective one should be able to relax the assumption $H(\p_\pm X,g)>H(\p_\pm M,g_\varphi)$ in Proposition \ref{prop:comparison1} to $H(\p_\pm X,g)\geq H(\p_\pm M,g_\varphi)$ if $\varphi$ is $\log$-concave. 
However, we are only able to do so whenever $\varphi$ is $\log$-constant (see Section \ref{section:CMC}). 
If $\varphi$ is strictly $\log$-concave we can work around certain aspects of the problem but fall short of the desired result.
The reason for this indiscrepancy is the lack of a strong maximum principle for $\mu$-bubbles.
\end{Bemerkung}

In light of Proposition \ref{prop:comparison1} we try to identify situations where there are structural maps to compare $(X,g)$ and $(M,g_\varphi)$.
As in Proposition \ref{prop:warpedODE3} and Proposition \ref{prop:warpedODE2} we assume $\varphi$ to be strictly $\log$-concave or $\log$-constant.

\begin{Lemma}\label{lem:constantstructure}
Let $(X,g)$ be a Riemannian band, $(N,g_N)$ be a closed Riemannian manifold with constant scalar curvature and $(M,g_\varphi)$ be a warped product over $(N,g_N)$ with warping function $\varphi:[a,b]\to\R_+$. If $\varphi$ is $\log$-constant, then any smooth band map $\phi:X\to[a,b]$ such that $\Sc(X,g)(x)\geq\Sc(M,g_\varphi)(p_0,\phi(x))$ is structural.
\end{Lemma}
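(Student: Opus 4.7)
First I would unpack the log-constant hypothesis: by Definition \ref{def:logconcavealt}, $(\varphi'/\varphi)' \equiv 0$ on $[a,b]$, so $h_\varphi = (n-1)\varphi'/\varphi$ is constant and in particular $h_\varphi' \equiv 0$. Composing with any smooth band map $\phi: X \to [a,b]$ gives $h = h_\varphi \circ \phi$ constant on $X$, hence $\nabla h \equiv 0$. The tangential term $2g(\nabla h(x), \nu(x))$ in the structural inequality \eqref{eq:structural} therefore vanishes at every $x \in X$ and for every choice of unit normal field $\nu$. This means that verifying structurality of $\phi$ is independent of the separating hypersurface $\Sigma$ and reduces to a pointwise inequality on $X$ itself.

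Next I would rewrite the right-hand side of \eqref{eq:structural} using the warped product identity \eqref{eq:warpedODE} applied to $(M, g_\varphi)$ at the point $(p_0, \phi(x))$ for any fixed $p_0 \in N$. Since $h_\varphi'(\phi(x)) = 0$, equation \eqref{eq:warpedODE} yields
\[
\frac{1}{\varphi^2(\phi(x))}\Sc(N,g_N) \;=\; \Sc(M,g_\varphi)(p_0,\phi(x)) + \frac{n}{n-1}h_\varphi^2.
\]
Substituting this into \eqref{eq:structural} and cancelling the constant $h_\varphi^2$-term, the required inequality collapses to $\Sc(X,g)(x) \geq \Sc(M,g_\varphi)(p_0,\phi(x))$. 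This is precisely the pointwise scalar curvature assumption $\Sc(X,g) \geq \Sc(M,g_\varphi)$ interpreted via the band map $\phi$; observe that $\Sc(M,g_\varphi)(p,t)$ depends only on $t$ by \eqref{eq:scwarped}, since $\Sc(N,g_N)$ is constant and the remaining terms only involve $\varphi$.

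There is no real obstacle here: the lemma is a formal consequence of $h_\varphi$ being constant, which eliminates the gradient term and reduces structurality to the scalar curvature comparison alone. The reason the strictly log-concave case requires substantially more care---namely, one must choose $\phi$ with $|\nabla \phi| \leq 1$ in order to absorb the extra term coming from $h_\varphi' < 0$, in the spirit of Proposition \ref{prop:warpedODE2}---is entirely absent here, which is precisely why \emph{every} smooth band map qualifies as structural in the log-constant setting.
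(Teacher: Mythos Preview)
Your proof is correct and follows essentially the same route as the paper: both arguments observe that $h=h_\varphi\circ\phi$ is constant (so the gradient term in \eqref{eq:structural} vanishes), then use \eqref{eq:warpedODE} with $h_\varphi'=0$ to reduce the structural inequality to the assumed scalar curvature bound. The only cosmetic difference is that the paper explicitly notes $\Sc(M,g_\varphi)$ is constant, whereas you note it depends only on $t$; either suffices.
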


\begin{proof}
Let $\Sigma$ be a hypersurface which separates $\p_-X$ and $\p_+X$ in $X$. 
If $\phi:X\to[a,b]$ is smooth, and $\Sc(X,g)(x)\geq\Sc(M,g_\varphi)(p_0,\phi(x))$, then
\begin{displaymath}
\Sc(X,g)(x)+\frac{n}{n-1}h^2(x)+2g(\nabla h(x),\nu(x))=\Sc(X,g)(x)+\frac{n}{n-1}h^2(x)
\end{displaymath}
since $\nabla h=0$ ($\varphi$ is $\log$-constant) and
\begin{displaymath}
\begin{split}
\Sc(X,g)(x)+\frac{n}{n-1}h^2(x)&\geq\Sc(M,g_\varphi)(p_0,\phi(x))+\frac{n}{n-1}h_\varphi^2(\phi(x))\\
&=\frac{1}{\varphi^2(\phi(x))}Sc(N,g_N),
\end{split}
\end{displaymath}
which implies that $\phi$ is structural.
\end{proof}

\begin{Lemma}\label{lem:logconcavestructure}
Let $(X,g)$ be a Riemannian band, $(N,g_N)$ be a closed Riemannian manifold with constant scalar curvature and $(M,g_\varphi)$ be a warped product over $(N,g_N)$ with warping function $\varphi:[a,b]\to\R_+$. If 
\begin{itemize}
\item[$\triangleright$] $\phi:X\to[a,b]$ is a smooth 1-Lipschitz band map,
\item[$\triangleright$] $\varphi$ is strictly $\log$-concave and
\item[$\triangleright$] $\Sc(X,g)(x)\geq\Sc(M,g_\varphi)(p_0,\phi(x))$,
\end{itemize}
then $\phi$ is structural. 
\end{Lemma}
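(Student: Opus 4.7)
The plan is to directly verify the structural inequality \eqref{eq:structural} by expanding $h = h_\varphi\circ\phi$ via the chain rule, then using formula \eqref{eq:warpedODE} on the warped product side to trade the scalar curvature term for a term involving $h_\varphi'$, and finally exploiting the signs guaranteed by strict $\log$-concavity and the 1-Lipschitz hypothesis.

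Concretely, first I would compute $\nabla h(x) = h_\varphi'(\phi(x))\nabla\phi(x)$, so that along a separating hypersurface $\Sigma$ with outward unit normal $\nu$ the left hand side of \eqref{eq:structural} becomes
$$\Sc(X,g)(x) + \tfrac{n}{n-1}h_\varphi^2(\phi(x)) + 2 h_\varphi'(\phi(x))\, g(\nabla\phi(x),\nu(x)).$$
Next I would apply the pointwise scalar curvature bound $\Sc(X,g)(x)\geq\Sc(M,g_\varphi)(p_0,\phi(x))$ and rewrite the right hand side by \eqref{eq:warpedODE} applied to $(M,g_\varphi)$, giving
$$\Sc(M,g_\varphi)(p_0,\phi(x)) + \tfrac{n}{n-1}h_\varphi^2(\phi(x)) = \tfrac{1}{\varphi^2(\phi(x))}\Sc(N,g_N) - 2 h_\varphi'(\phi(x)).$$
Combining these two steps reduces the desired inequality \eqref{eq:structural} to
$$2 h_\varphi'(\phi(x))\bigl(g(\nabla\phi(x),\nu(x)) - 1\bigr)\geq 0.$$

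It now remains to check the sign of each factor. Because $\varphi$ is strictly $\log$-concave, $h_\varphi' = (n-1)(\varphi'/\varphi)' < 0$ on $[a,b]$. On the other hand, since $\phi$ is smooth and 1-Lipschitz we have $|\nabla\phi|\leq 1$ pointwise, and Cauchy–Schwarz together with $|\nu|=1$ gives $g(\nabla\phi,\nu)\leq 1$. Thus the product of the two factors is non-negative, which closes the argument.

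There is no real obstacle here beyond bookkeeping the signs; the whole point of Definition \ref{def:structural} is that it is designed precisely to absorb the error term $h_\varphi'(1 - g(\nabla\phi,\nu))$ produced when pulling back \eqref{eq:warpedODE} along a non-isometric $\phi$, and strict $\log$-concavity of $\varphi$ is exactly the condition that makes this error term have the favorable sign under the 1-Lipschitz assumption. The comparison with Lemma \ref{lem:constantstructure} is clean: there one had $h_\varphi'\equiv 0$ and so the Lipschitz hypothesis was unnecessary, whereas here the negativity of $h_\varphi'$ forces us to control $\nabla\phi$.
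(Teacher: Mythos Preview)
Your proof is correct and follows essentially the same route as the paper's: both compute $\nabla h = h_\varphi'(\phi)\nabla\phi$ via the chain rule, use Cauchy--Schwarz together with $|\nabla\phi|\leq 1$ and $h_\varphi'<0$ to bound $g(\nabla h,\nu)\geq h_\varphi'(\phi)$, and then invoke \eqref{eq:warpedODE} and the scalar curvature hypothesis to close. The only cosmetic difference is that the paper writes the chain of inequalities in one line, while you first reduce to the sign condition $h_\varphi'(\phi)(g(\nabla\phi,\nu)-1)\geq 0$ and then verify it.
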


\begin{proof}
Let $\Sigma$ be a hypersurface which separates $\p_-X$ and $\p_+X$ in $X$. Since $\varphi$ is strictly $\log$-concave ie $h_\varphi'<0$ and $\phi$ is 1-Lipschitz we have
$$\frac{n}{n-1}h^2(x)+2g(\nabla h(x),\nu(x))\geq\frac{n}{n-1}h_\varphi^2(\phi(x))+2h_\varphi'(\phi(x))|\nabla\phi|\geq \frac{n}{n-1}h_\varphi^2(\phi(x))+2h_\varphi'(\phi(x)).$$
Together with $\Sc(X,g)(x)\geq\Sc(M,g_\varphi)(p_0,\phi(x))$ and \eqref{eq:warpedODE} it follows that
$\phi$ is structural.
\end{proof}

The following can also be found in \cite{Zhu20}*{Lemma 4.1} and \cite{CeZei21}*{Lemma 7.2}.

\begin{Lemma}\label{lem:map}
Let $(X,g)$ be a Riemannian band. If $\wid(X,g)>a-b$, there is a smooth band map $\phi:(X,g)\rightarrow[a,b]$ with $\Lip(\phi)<1$.
\end{Lemma}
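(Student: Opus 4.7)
The plan is standard: start with the distance function to $\partial_- X$, compose with a piecewise linear cutoff so the result is a band map which is locally constant near both boundary components, then smooth while approximately preserving the Lipschitz constant. Throughout I assume the hypothesis is $\wid(X,g) > b-a$ (the statement with $a-b$ appears to be a typo).

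Set $w = \wid(X,g)$ and let $d(x) = \dist_g(x,\partial_- X)$, which is $1$-Lipschitz and satisfies $d|_{\partial_- X} = 0$ and $d|_{\partial_+ X} \geq w$. Since $w > b-a$, we may pick $\varepsilon > 0$ small enough that $\lambda := (b-a)/(w - 2\varepsilon) < 1$. Define a piecewise linear $\eta \colon [0,\infty) \to [a,b]$ by $\eta \equiv a$ on $[0,\varepsilon]$, $\eta$ affine of slope $\lambda$ on $[\varepsilon, w-\varepsilon]$, and $\eta \equiv b$ on $[w-\varepsilon,\infty)$. Set $\tilde\phi := \eta \circ d$. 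Then $\tilde\phi$ is $\lambda$-Lipschitz, $\tilde\phi \equiv a$ on the open collar $\{d < \varepsilon\}$ containing $\partial_- X$, and $\tilde\phi \equiv b$ on a neighborhood of $\partial_+ X$ (every point within distance $\varepsilon$ of $\partial_+ X$ satisfies $d \geq w - \varepsilon$ by the triangle inequality).

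It remains to replace $\tilde\phi$ by a smooth band map $\phi$ with $\Lip(\phi) < 1$. Fix open sets $U_-\supset\partial_-X$ and $U_+\supset\partial_+ X$ on which $\tilde\phi \equiv a$ and $\tilde\phi \equiv b$ respectively, and let $K \subset X \setminus (U_- \cup U_+)$ be compact. On a slight thickening of $K$ inside $\mathrm{int}(X)$, apply a standard smoothing of Lipschitz functions on a Riemannian manifold (convolution in finitely many coordinate charts assembled by a partition of unity, or the Greene--Wu type smoothing of Azagra--Ferrera--L\'opez-Mesas), producing for any $\delta > 0$ a smooth function $\phi_1$ with $\sup |\phi_1 - \tilde\phi| < \delta$ and $\Lip(\phi_1) \leq \lambda + \delta$. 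Choose $\delta$ so small that $\lambda + \delta < 1$. Finally, use a smooth cutoff supported in a slightly larger neighborhood of $K$ (and equal to $1$ on $K$) to interpolate between $\phi_1$ and the constant value $a$ on a smaller subcollar of $\partial_- X$, and similarly for $b$ near $\partial_+ X$; by taking $\delta$ smaller than the slope of the cutoff times the width of the interpolation region, the interpolation contributes only a term of order $\delta$ to the Lipschitz constant, so the resulting $\phi$ is smooth, is identically $a$ near $\partial_- X$ and $b$ near $\partial_+ X$, and has $\Lip(\phi) < 1$.

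The only nontrivial step is the smoothing/interpolation at the end: one must reconcile smoothness with the rigid boundary values while keeping the Lipschitz constant strictly below $1$. This works precisely because $\tilde\phi$ is already locally constant in a definite neighborhood of $\partial X$, so the smoothing error and cutoff interpolation occur only in the interior region where $\tilde\phi$ is genuinely affine in $d$ with slope $\lambda$ strictly less than $1$, leaving room to absorb the $O(\delta)$ loss.
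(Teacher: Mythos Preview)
The paper does not actually prove this lemma; it merely states it and cites \cite{Zhu20}*{Lemma 4.1} and \cite{CeZei21}*{Lemma 7.2} for the argument. Your construction---rescale the distance function to $\partial_-X$, make it constant near the boundary via a piecewise linear cutoff, then smooth---is precisely the standard argument one finds in those references, and it is correct (including your observation that the hypothesis should read $\wid(X,g)>b-a$). The smoothing step could be streamlined slightly: since $\tilde\phi$ is already locally constant near $\partial X$, you can simply smooth $\tilde\phi$ on all of $X$ (e.g.\ by Greene--Wu) and the result will automatically remain locally constant near $\partial X$, avoiding the separate interpolation bookkeeping; but what you wrote works as stated.
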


A combination of Lemma \ref{lem:logconcavestructure} and Lemma \ref{lem:map} yields:

\begin{Lemma}\label{lem:3condstructure}
Let $(X,g)$ be a Riemannian band, $(N,g_N)$ be a closed Riemannian manifold with constant scalar curvature and $(M,g_\varphi)$ be a warped product over $(N,g_N)$ with warping function $\varphi:[a,b]\to\R_+$. If $\varphi$ is strictly $\log$-concave, $\Sc(M,g_\varphi)$ is constant, and the following holds true
\begin{itemize}
\item[$\triangleright$] $\Sc(X,g)\geq\Sc(M,g_\varphi)$,
\item[$\triangleright$] $\wid(X,g)>\wid(M,g_\varphi)$,
\end{itemize}
there exists a structural band map $\phi:X\to[a,b]$.\qed
\end{Lemma}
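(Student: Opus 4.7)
The plan is to deduce the lemma as an immediate combination of Lemma \ref{lem:map} and Lemma \ref{lem:logconcavestructure}; essentially no new analytic input is required.

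First I would identify $\wid(M,g_\varphi)$. Because the warped product metric is $\varphi^2(t)g_N + dt^2$ and the $t$-direction is parametrized by arc length, the distance between the two boundary components of $M$ equals $b-a$, so $\wid(M,g_\varphi)=b-a$. The hypothesis $\wid(X,g)>\wid(M,g_\varphi)$ therefore reads $\wid(X,g)>b-a$, which is precisely the hypothesis of Lemma \ref{lem:map}. Applying that lemma produces a smooth band map $\phi:X\to[a,b]$ with $\Lip(\phi)<1$; in particular $\phi$ is smooth and $1$-Lipschitz.

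Next I would check the three hypotheses of Lemma \ref{lem:logconcavestructure} for this $\phi$. Smoothness and the $1$-Lipschitz property come from the previous step. Strict $\log$-concavity of $\varphi$ is assumed. Finally, the pointwise scalar curvature bound
\[
\Sc(X,g)(x)\geq\Sc(M,g_\varphi)(p_0,\phi(x))
\]
follows from the global assumption $\Sc(X,g)\geq\Sc(M,g_\varphi)$ together with the hypothesis that $\Sc(M,g_\varphi)$ is constant (so its value at $(p_0,\phi(x))$ agrees with its value at every other point of $M$). Invoking Lemma \ref{lem:logconcavestructure} then gives that $\phi$ is structural, which is the claim.

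There is no real obstacle in this argument; the whole content sits in the two lemmas already proved. The only point worth flagging is the strict inequality $\wid(X,g)>\wid(M,g_\varphi)$: this is what allows Lemma \ref{lem:map} to yield a \emph{strictly} $1$-Lipschitz map, which is needed so that the $\log$-concave comparison term $h_\varphi'(\phi(x))|\nabla\phi|$ in the proof of Lemma \ref{lem:logconcavestructure} can be estimated against $h_\varphi'(\phi(x))$ (recall $h_\varphi'<0$). Were one only to assume $\wid(X,g)\geq\wid(M,g_\varphi)$, the construction of $\phi$ via Lemma \ref{lem:map} would no longer be automatic, reflecting the boundary-regularity issue alluded to in Remark \ref{rem:weakness}.
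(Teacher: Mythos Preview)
Your proposal is correct and follows exactly the approach the paper indicates: the lemma is stated with a \qed immediately after the statement, preceded by the sentence ``A combination of Lemma \ref{lem:logconcavestructure} and Lemma \ref{lem:map} yields:'', and your argument spells out precisely this combination. Your extra observations about why $\wid(M,g_\varphi)=b-a$ and why the constancy of $\Sc(M,g_\varphi)$ is needed are accurate and match the paper's setup.
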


\begin{Bemerkung}\label{rem:addition}
If $\Lip(\phi)<1$ in Lemma \ref{lem:logconcavestructure} we get strict inequality in \eqref{eq:structural}. This observation is important as it allows us to obtain strict inequality for the operator in Proposition \ref{prop:comparison1} later (see Remark \ref{rem:sharp1}). In particular this applies to the band map $\phi$ we get from Lemma \ref{lem:3condstructure}.
\end{Bemerkung}

\subsection{Model Spaces}
The notion of a model space (compare Definition \ref{def:model} for scalar and mean curvature comparison of Riemannian bands is motivated by our observations so far. In addition to those we introduced in Section \ref{section:models} one considers Annuli in simply connected space forms.\\
 
Let $(S^n,g_1)\backslash\{p_1,p_2\}$ be the unit $n$-sphere with two antipodal points removed. 
This has constant scalar curvature equal to $n(n-1)$ and can be written as a warped product $$\left(S^{n-1}\times(-\frac{\pi}{2},\frac{\pi}{2}),\cos^2(t)g_1+ dt^2\right),$$
where $(S^{n-1},g_1)$ is the unit sphere in one dimension less. 
Since $\cos(t)$ is strictly $\log$-concave we see that for $-\frac{\pi}{2}<\ell_-<\ell_+<\frac{\pi}{2}$ the warped product
$$\left(S^{n-1}\times[\ell_-,\ell_+],\cos^2(t)g_1+ dt^2\right)$$
is a model space.\\

Let $(\R^n,g_{\rm std})\backslash\{0\}$ be euclidean space with the origin removed.
This is scalar flat and can be written as a warped product 
$$\left(S^{n-1}\times(0,\infty), t^2g_1+ dt^2\right)$$
Since $t$ is stricly $\log$-concave we see that for $0<\ell_-<\ell_+<\infty$ the warped product
$$\left(S^{n-1}\times[\ell_-,\ell_+],t^2g_1+ dt^2\right)$$
is a model space.\\

Let $(\Hy^n,g_{-1})\backslash\{p\}$ be hyperbolic space with a point removed. 
This has constant scalar curvature equal to $-n(n-1)$ and can be written as a warped product 
$$\left(S^{n-1}\times(0,\infty), \sinh^2(t)g_1+ dt^2\right).$$
Since $\sinh(t)$ is strictly $\log$-concave we see that for $0<\ell_-<\ell_+<\infty$ the warped product
$$\left(S^{n-1}\times[\ell_-,\ell_+],\sinh^2(t)g_1+ dt^2\right)$$
is a model space.

\begin{Bemerkung}
Let $(X^n,g)$ be a Riemannian spin band and $(M,g_\varphi)$ one of the above model spaces. Let $\Phi:X\to M$ be a smooth 1-Lipschitz band map with degree non zero.
In \cite{CeZei21}*{Corollary 10.4} Cecchini and Zeidler prove the following: If $\Sc(X,g)\geq \Sc(M,g)$ and $H(\p_\pm X,g)\geq H(\p_\pm M,g)$, then $\Phi$ is an isometry.

As is explained in \cite{Gro19}*{Section 5.5} one can recreate similar results using $\mu$-bubbles and a stabilized version of Llarull's theorem \cite{Ll98} in dimension $3\leq n\leq7$ (one does not need to assume that $n$ is odd). However, as rigidity for strictly $\log$-concave warping functions remains problematic in our setting (see Remark \ref{rem:rigidity}) the best result we could obtain at this moment is: If $\Sc(X,g)\geq \Sc(M,g)$ and $H(\p_\pm X,g)\geq H(\p_\pm M,g)$, there is no smooth band map $\Phi:X\to M$ with degree non-zero and $\Lip(\Phi)<1$.
\end{Bemerkung}

\section{$\mu$-bubbles}\label{section:bubbles}

We briefly introduce the most important definitions and results (see \cite{ChLi20}*{Section 3}, \cite{Gro19}*{Section 5.1} and \cite{Zhu20}*{Section 2}) concerning $\mu$-bubbles.
As a good reference for the theory of Caccioppoli sets, which will be used freely throughout the rest of this article, we recommend \cite{Giu84}*{Chapter 1}.

\subsection{Basics}
Let $(X,g)$ be an oriented Riemannian band and $h$ be a smooth function on $X$.
Denote by $\mathcal{C}(X)$ the set of all Caccioppoli sets in $X$ which contain an open neighborhood of $\p_-X$ and are disjoint from $\p_+X$.
For $\hat{\Omega}\in\mathcal{C}(X)$ consider the functional
\begin{displaymath}
\mathcal{A}_h(\hat{\Omega})=\Ha^{n-1}(\p^*\hat{\Omega}\cap\mathring{X})-\int_{\hat{\Omega}}hd\Ha^n,
\end{displaymath}
where $\p^*\hat{\Omega}$ is the reduced boundary \cite{Giu84}*{Chapter 3,4} of $\hat{\Omega}$. We denote 
$$\mathcal{I}:=\inf\{\mathcal{A}_h(\hat{\Omega})\bigr| \hat{\Omega}\in\mathcal{C}(X)\}$$
and call a Caccioppoli set $\Omega\in\mathcal{C}(X)$ a $\mu$\emph{-bubble} if $\mathcal{A}_h(\Omega)=\mathcal{I}$ ie $\Omega$ minimizes the $\mathcal{A}_h$-functional among all Caccioppoli sets in $X$, which contain a neighborhood of $\p_-X$ and are disjoint from $\p_+X$.

\begin{Bemerkung}\label{rem:convention2}
To preempt any confusion we remind reader of our mean curvature convention in Remark \ref{rem:convention1}, according to which $H(\p_-X)$ is the trace of the second fundamental form with respect to the inner unit normal field. 
However, if $\hat{\Omega}$ is a smooth Caccioppoli set which contains an open neighborhood of $\p_-X$ and $\hat{\Sigma}$ is a connected component of $\p\hat{\Omega}\cap\mathring{X}$, then the mean curvature $H(\hat{\Sigma})$ is the trace of the second fundamental form with respect to the unit normal field pointing into $\hat{\Omega}$. 
Hence, if $\hat{\Sigma}$ approaches $\p_-X$ then $H(\hat{\Sigma})$ approaches $-H(\p_-X)$ and if $\hat{\Sigma}$ approaches $\p_+X$, then $H(\hat{\Sigma})$ approaches $H(\p_+X)$.
\end{Bemerkung}

\begin{Lemma}[see \cite{Gro19}*{Section 5.1}]\label{lem:exmean}
If $n\leq7$ and $H(\p_\pm X)>\pm h$ on $\p_\pm X$, there is a smooth $\mu$-bubble $\Omega$ ie a smooth Caccioppoli set $\Omega\in\mathcal{C}(X)$, with $\mathcal{A}_h(\Omega)=\mathcal{I}$.
\end{Lemma}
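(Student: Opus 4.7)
The plan is to combine the direct method of geometric measure theory with a barrier argument exploiting the strict inequalities $H(\p_\pm X) > \pm h$, and then to conclude with classical interior regularity for prescribed mean curvature minimizers. First I would establish existence of a BV minimizer: since $X$ is compact and $h$ is smooth, one has $\mathcal{A}_h(\hat{\Omega}) \geq -\|h\|_\infty \vol(X,g)$ for every $\hat{\Omega} \in \mathcal{C}(X)$, so $\mathcal{I}$ is finite. Taking a minimizing sequence $\hat{\Omega}_i \in \mathcal{C}(X)$, the perimeters $\Ha^{n-1}(\p^*\hat{\Omega}_i \cap \mathring{X})$ stay uniformly bounded, and by the standard $BV$-compactness theorem (\cite{Giu84}*{Chapter 1}) a subsequence converges in $L^1(X)$ to a Caccioppoli set $\Omega$. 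Lower semicontinuity of perimeter together with $L^1$-continuity of the weighted volume term give $\mathcal{A}_h(\Omega) \leq \mathcal{I}$.

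Second, I would run a barrier argument to show that the limit $\Omega$ actually lies in $\mathcal{C}(X)$, and hence attains $\mathcal{I}$. Near $\p_-X$ the distance function $d_-(x) = \dist_g(x,\p_-X)$ is smooth on a thin collar; by Remark \ref{rem:convention2} and continuity, for some $s_0 > 0$ the level set $\{d_- = s\}$ has mean curvature (with respect to the unit normal pointing into $\{d_- > s\}$) strictly less than $h$ for every $s \in (0, s_0]$. A first variation computation then shows that replacing any competitor $\hat{\Omega}$ by $\hat{\Omega} \cup \{d_- \leq s_0\}$ cannot increase $\mathcal{A}_h$, so I may assume the minimizing sequence uniformly contains $\{d_- \leq s_0\}$; the $L^1$-limit $\Omega$ then contains a neighborhood of $\p_-X$. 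A symmetric barrier near $\p_+X$ keeps $\Omega$ at positive distance from $\p_+X$, so $\Omega \in \mathcal{C}(X)$.

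Finally, I would invoke interior regularity. The Euler-Lagrange equation for $\mathcal{A}_h$ identifies $\p^*\Omega \cap \mathring{X}$ as a boundary of prescribed mean curvature $h$ minimizing the perimeter functional perturbed by the smooth weight $h$; by classical results of Massari together with the De Giorgi-Federer-Almgren dimension reduction, such a minimizer is smooth away from a closed singular set of Hausdorff dimension at most $n-8$, which is empty for $n \leq 7$. The main obstacle is the barrier step: the strict inequality $H(\p_\pm X) > \pm h$ is precisely what is needed to build quantitative barriers at the topological boundary, and verifying that the modifications of the minimizing sequence survive the $L^1$-limit (so that $\Omega$ genuinely lies in $\mathcal{C}(X)$ rather than degenerating) is the non-routine part. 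Steps one and three are essentially standard, since the regularity theory for $\mathcal{A}_h$-minimizers with smooth weight $h$ differs only cosmetically from the classical perimeter case.
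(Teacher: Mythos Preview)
Your proposal is correct and follows essentially the same route as the paper: both arguments use the equidistant foliation near $\p_\pm X$ together with the strict inequality $H(\p_\pm X)>\pm h$ as a barrier to trap the minimizing sequence away from $\p X$, then invoke BV-compactness and standard regularity for prescribed-mean-curvature minimizers (the paper cites \cite{ZZ18}*{Theorem 2.2}) in dimension $n\leq 7$. The only cosmetic difference is that where you write ``a first variation computation'' for the replacement $\hat{\Omega}\mapsto\hat{\Omega}\cup\{d_-\leq s_0\}$, the paper carries this out explicitly via the divergence theorem applied to the unit normal of the equidistant foliation, obtaining in fact a \emph{strict} decrease of $\mathcal{A}_h$; this is the rigorous form of your barrier step for Caccioppoli (rather than smooth) competitors.
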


\begin{proof}
We adapt the proofs of \cite{Zhu20}*{Proposition 2.1} and \cite{ChLi20}*{Proposition 12}.
For $t>0$ denote by $\Omega_\pm^t$ the $t$-neighborhoods of $\p_\pm X$. Since $\p_\pm X$ is smooth $\Omega_\pm^t$ has a foliation by smooth equidistant hypersurfaces $\Sigma^{s\leq t}_{\pm}$ for $t$ small enough.
Denote by $\nu^s_\pm$ the unit normal vector field to $\Sigma^s_{\pm}$ pointing in the direction of $\p_+X$ and by $H(\Sigma^s_\pm)$ the trace of the second fundamental form of $\Sigma^s_\pm$ with respect to $-\nu^s_\pm$. 
By possibly making $t$ even smaller we can guarantee
$$\dive(\nu_-^s)=H(\Sigma_-^s)< h(x) \text{ on $\Omega_-^t$ and }\dive(\nu_+^s)=H(\Sigma_+^s)> h(x)\text{ on $\Omega_+^t$}.$$
Let $\hat{\Omega}$ be any Caccioppoli set with $\p_-X\subset\hat{\Omega}$ and $\p_+X\cap\hat{\Omega}=\emptyset$.
We want to see the following: if we add $\Omega_-^t$ to $\hat{\Omega}$ or substract $\Omega_+^t$ from $\hat{\Omega}$ we do not increase the value of $\mathcal{A}_h$.
\begin{multline*}
\mathcal{A}_h((\hat{\Omega}\cup\Omega_-^t)\backslash\Omega_+^t)-\mathcal{A}_h(\hat{\Omega})= \Ha^{n-1}(\p\Omega_-^t\backslash\hat{\Omega})-\Ha^{n-1}(\p^*\hat{\Omega}\cap\Omega_-^t)+\Ha^{n-1}(\p\Omega_+^t\cap\hat{\Omega})\\
-\Ha(\p^*\hat{\Omega}\cap\Omega_+^t)
-\int_{\Omega_-^t\backslash\hat{\Omega}}hd\Ha^n+\int_{\Omega_+^t\cap\hat{\Omega}}hd\Ha^n.
\end{multline*}
The divergence theorem and our assumption on $h$ implies 
\begin{displaymath}
\int_{\Omega_-^t\backslash\hat{\Omega}}hd\Ha^n> \int_{\Omega_-^t\backslash\hat{\Omega}}\dive{\nu_-^s}=\int_{\p^*(\Omega_-^t\backslash\hat{\Omega})}\langle\nu_-^s,\nu\rangle d\Ha^{n-1}\geq\Ha^{n-1}(\p\Omega_-^t\backslash\hat{\Omega})-\Ha^{n-1}(\p^*\hat{\Omega}\cap\Omega_-^t)
\end{displaymath}
and
\begin{displaymath}
\int_{\Omega_+^t\cap\hat{\Omega}}hd\Ha^n<\int_{\Omega_+^t\cap\hat{\Omega}}\dive{\nu_+^s}=\int_{\p^*(\Omega_+^t\cap\hat{\Omega})}\langle\nu_+^s,\nu\rangle d\Ha^{n-1}\leq-\Ha^{n-1}(\p\Omega_+^t\cap\hat{\Omega})
+\Ha^{n-1}(\p^*\hat{\Omega}\cap\Omega_+^t).
\end{displaymath}
We conclude that 
\begin{equation}\label{eq:min}
\mathcal{A}_h((\hat{\Omega}\cup\Omega_-^t)\backslash\Omega_+^t)-\mathcal{A}_h(\hat{\Omega})<0,
\end{equation}
which implies that it is enough to search for a minimizer among all Caccioppoli sets in $X$ with $\Omega_-^t\subset\hat{\Omega}$ and $\Omega_+^t\cap\hat{\Omega}=\emptyset$. If $C$ is a constant such that $|h|<C$ on $X$, then for any such Caccioppoli set we have $\mathcal{A}_h(\hat{\Omega})>-C\Ha^{n}(X)>-\infty$.
We choose a minimizing sequence $\hat{\Omega}_k$. By compactness for Caccioppoli sets (compare \cite{Giu84}*{Theorems 1.19 \& 1.20}) $\hat{\Omega}_k$ subconverges to a minimizing Caccioppoli set $\Omega$ which contains an open neighborhood of $\p_-X$ and is disjoint from $\p_+X$. Smoothness of $\Omega$ follows from the regularity theorem \cite{ZZ18}*{Theorem 2.2}.
\end{proof}

If $\hat{\Omega}\in\mathcal{C}(X)$ is smooth and $\hat{\Sigma}$ is a connected component of $\p\hat{\Omega}\backslash\p_-X$, we denote by $\nu$ the outwards pointing unit normal vector field, by $A$ the second fundamental form with respect to $-\nu$ and by $H$ the trace of $A$.

\begin{Lemma}[First variation formula]\label{lem:var1nonwarped}
For any smooth function $\psi$ on $\hat{\Sigma}$ let $V_\psi$ be a vector field on $X$, which vanishes outside a small neighborhood of $\hat{\Sigma}$ and agrees with $\psi\nu$ on $\hat{\Sigma}$. If we denote by $\Phi_t$ the flow generated by $V_\psi$, then
\begin{equation}\label{eq:var1}
\frac{d}{dt}\Bigr|_{t=0}\mathcal{A}_h(\Phi_t(\hat{\Omega}))=\int_{\hat{\Sigma}} (H-h)\psi d\Ha^{n-1}.
\end{equation}
\end{Lemma}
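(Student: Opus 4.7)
The plan is to split $\mathcal{A}_h$ into its area term $\Ha^{n-1}(\p^*\hat\Omega\cap\mathring X)$ and its bulk term $-\int_{\hat\Omega}h\,d\Ha^n$ and differentiate each separately. Since $V_\psi$ is compactly supported in a small neighborhood of $\hat\Sigma$, and $\hat\Sigma\subset\mathring X$ (because $\hat\Omega$ is disjoint from $\p_+X$), the flow $\Phi_t$ leaves every other component of $\p^*\hat\Omega$ invariant for $|t|$ small, so only $\hat\Sigma$ contributes to either variation.

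For the area term I would invoke the classical first variation of area,
$$\frac{d}{dt}\Big|_{t=0}\Ha^{n-1}(\Phi_t(\hat\Sigma))=\int_{\hat\Sigma}\dive_{\hat\Sigma}V_\psi\,d\Ha^{n-1}.$$
With $V_\psi|_{\hat\Sigma}=\psi\nu$ and a local orthonormal frame $e_1,\ldots,e_{n-1}$ of $\hat\Sigma$, the tangential divergence reduces (using $g(\nu,e_i)=0$) to $\psi\sum_i g(\nabla_{e_i}\nu,e_i)$. Under the convention stated just before the lemma — $A$ is the second fundamental form with respect to $-\nu$ and $H=\mathrm{tr}\,A$ — a direct check gives $\sum_i g(\nabla_{e_i}\nu,e_i)=H$, contributing $\int_{\hat\Sigma}H\psi\,d\Ha^{n-1}$. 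For the bulk term I would use the change of variables $\int_{\Phi_t(\hat\Omega)}h=\int_{\hat\Omega}(h\circ\Phi_t)\,\mathrm{Jac}(\Phi_t)$ together with $\frac{d}{dt}|_{t=0}\mathrm{Jac}(\Phi_t)=\dive V_\psi$ and $\frac{d}{dt}|_{t=0}(h\circ\Phi_t)=V_\psi(h)$, followed by the divergence theorem on the Caccioppoli set $\hat\Omega$, to obtain
$$\frac{d}{dt}\Big|_{t=0}\int_{\Phi_t(\hat\Omega)}h\,d\Ha^n=\int_{\hat\Omega}\dive(hV_\psi)\,d\Ha^n=\int_{\hat\Sigma}h\psi\,d\Ha^{n-1},$$
using the support of $V_\psi$ and the fact that $g(V_\psi,\nu)=\psi$ on $\hat\Sigma$ at the last step.

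Subtracting the two contributions yields the claimed formula. There is no real obstacle here; the only delicate point is aligning the sign conventions, in particular that the trace $H$ is taken with respect to the inner normal $-\nu$, whereas the natural output of the first variation of area is $\sum_i g(\nabla_{e_i}\nu,e_i)$ formed from the outward normal $\nu$. Both expressions agree thanks to the choice $A=A^{-\nu}$, and after that the computation is entirely textbook.
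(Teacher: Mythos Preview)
Your argument is correct and is precisely the standard derivation: split the functional into the area term and the bulk term, apply the classical first variation of area to the former, and use the transport/divergence theorem for the latter. The sign check $\sum_i g(\nabla_{e_i}\nu,e_i)=H$ under the convention $A=A^{-\nu}$ is handled correctly.

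The paper itself does not give a proof of this lemma; it is stated as a known formula and only the second variation (Lemma~\ref{lem:var2nonwarped}) is proved, by differentiating \eqref{eq:var1}. So there is nothing to compare against beyond noting that your write-up fills in exactly the routine computation the paper omits.
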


\begin{Lemma}[Second variation formula]\label{lem:var2nonwarped}
For any smooth function $\psi$ on $\hat{\Sigma}$ let $V_\psi$ be a vector field on $X$, which vanishes outside a small neighborhood of $\hat{\Sigma}$ and agrees with $\psi\nu$ on $\hat{\Sigma}$. If we denote by $\Phi_t$ the flow generated by $V_\psi$, then
\begin{displaymath}
\frac{d^2}{dt^2}\Bigr|_{t=0}\mathcal{A}_h(\Phi_t(\hat{\Omega}))= \int_{\hat{\Sigma}} |\nabla_{\hat{\Sigma}}\psi|^2+(H^2-Ric(\nu,\nu)-|A|^2-Hh-g(\nabla_Xh,\nu))\psi^2,
\end{displaymath}
which is equal to
\begin{equation}\label{eq:var2nonwarped}
\int_\Sigma|\nabla_\Sigma\psi|^2-\frac{1}{2}(\Sc(X,g)-\Sc(\hat{\Sigma},g)-H^2+|A|^2)\psi^2-(Hh+g(\nabla_X(h),\nu))\psi^2
\end{equation}
\end{Lemma}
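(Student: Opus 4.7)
My plan is to decompose $\mathcal{A}_h(\Phi_t(\hat\Omega)) = \mathrm{Area}(\Sigma_t) - \int_{\Phi_t(\hat\Omega)}h\,d\mathcal{H}^n$, where $\Sigma_t := \Phi_t(\hat\Sigma)$, and to differentiate each summand twice at $t=0$. To obtain a clean formula in terms of $\psi$ alone I will reduce to the canonical normal extension of $\psi\nu$: inside a tubular neighborhood of $\hat\Sigma$ write $V_\psi = \widetilde{\psi}\,\widetilde{\nu}$, where $\widetilde{\nu}$ is the unit normal to the equidistant foliation and $\widetilde{\psi}$ is transported to be constant along normal geodesics. The flow then has the explicit form $\Phi_t(p) = \exp_p(t\psi(p)\nu(p))$, which is the canonical normal variation used throughout the $\mu$-bubble literature; in particular the normal speed function on $\Sigma_t$ remains identically $\psi$.

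For the area, pulling back via $\Phi_t$ gives $\mathrm{Area}(\Sigma_t) = \int_{\hat\Sigma}J_t\,d\mathcal{H}^{n-1}$. The first derivative $\partial_t J_t\big|_{t=0} = \mathrm{div}_{\hat\Sigma}(\psi\nu) = H\psi$ is immediate, and the second derivative is governed by the Jacobi-type first variation of mean curvature
$$\partial_t H_t\big|_{t=0} = -\Delta_{\hat\Sigma}\psi - (|A|^2 + \mathrm{Ric}(\nu,\nu))\psi,$$
which I will derive from the Riccati equation for the shape operator along the equidistant flow (with the extra $-\Delta\psi$ term arising from the variable normal speed). Combining via $\partial_t^2 J_t\big|_{t=0} = (\partial_t H_t\big|_{t=0})\psi + (H\psi)^2$ and integrating by parts on the closed hypersurface $\hat\Sigma$ yields
$$\frac{d^2}{dt^2}\bigg|_{t=0}\mathrm{Area}(\Sigma_t) = \int_{\hat\Sigma}\bigl(|\nabla_{\hat\Sigma}\psi|^2 - (|A|^2 + \mathrm{Ric}(\nu,\nu))\psi^2 + H^2\psi^2\bigr)\,d\mathcal{H}^{n-1}.$$

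For the $h$-integral the first derivative is $\int_{\Sigma_t}h\psi\,d\mathcal{H}^{n-1}$, and a second pull-back calculation using $\partial_t(h\circ\Phi_t)\big|_{t=0} = \psi\,g(\nabla_X h,\nu)$ together with $\partial_t J_t\big|_{t=0} = H\psi$ gives
$$\frac{d^2}{dt^2}\bigg|_{t=0}\int_{\Phi_t(\hat\Omega)}h\,d\mathcal{H}^n = \int_{\hat\Sigma}\bigl(Hh + g(\nabla_X h,\nu)\bigr)\psi^2\,d\mathcal{H}^{n-1}.$$
Subtracting the two produces the first displayed identity in the statement. The second form then follows from the Gauss equation
$$2\,\mathrm{Ric}(\nu,\nu) = \Sc(X,g) - \Sc(\hat\Sigma,g) + H^2 - |A|^2,$$
which rewrites $H^2 - |A|^2 - \mathrm{Ric}(\nu,\nu)$ as $-\tfrac{1}{2}\bigl(\Sc(X,g) - \Sc(\hat\Sigma,g) - H^2 + |A|^2\bigr)$.

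The main technical ingredient I expect to work carefully at is the Jacobi-type first-variation-of-mean-curvature formula, where the paper's inner-normal sign conventions for $H$ require special attention; once that formula is in place, the remaining manipulations are routine applications of the Leibniz rule, the Jacobian expansion $\partial_t J_t = (\mathrm{div}_{\Sigma_t}V_\psi)\,J_t$, and the Gauss identity.
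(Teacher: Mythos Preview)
Your proposal is correct and follows essentially the same route as the paper. The paper's proof simply differentiates the first variation $\int_{\hat\Sigma_t}(H-h)\psi$ via the Leibniz rule $\frac{d}{dt}\bigl|_{t=0}\int_{\hat\Sigma_t}f\psi=\int_{\hat\Sigma}(Hf+g(\nabla_Xf,\nu))\psi^2$ and then inserts the Jacobi-type identity $\int_{\hat\Sigma}g(\nabla_XH,\nu)\psi^2=\int_{\hat\Sigma}|\nabla_{\hat\Sigma}\psi|^2-(\mathrm{Ric}(\nu,\nu)+|A|^2)\psi^2$ together with the Gauss equation from Schoen--Yau; your decomposition into the area and the $h$-integral amounts to splitting that same Leibniz computation into the $f=H$ and $f=h$ pieces, using identical ingredients.
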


\begin{proof}
We differentiate the first variation employing the following Leibniz rule: If $f$ is a smooth function on $X$, then 
\begin{displaymath}
\frac{d}{dt}\Bigr|_{t=0}\int_{\hat{\Sigma}_t}f\psi d\Ha^{n-1}= \int_{\hat{\Sigma}} (Hf+g(\nabla_Xf,\nu))\psi^2 d\Ha^{n-1}.
\end{displaymath}
Furthermore we use the formula
\begin{displaymath}
\int_{\hat{\Sigma}}g(\nabla_XH_{\hat{\Sigma}_t},\nu) \psi^2 d\Ha^{n-1}=\int_{\hat{\Sigma}} |\nabla_{\hat{\Sigma}}\psi|^2-(Ric(\nu,\nu)+|A|^2)\psi^2,
\end{displaymath}
and the standard trick to rewrite $Ric(\nu,\nu)$ from \cite{SY79}*{p. 165}.
\end{proof}

If $\Omega$ is the $\mu$-bubble we get from Lemma \ref{lem:exmean}, then the mean curvature $H$ of $\Sigma$ is equal to $h$ by Lemma \ref{lem:var1nonwarped} and by stability and Lemma \ref{lem:var2nonwarped}, we see that 
\begin{displaymath}
\begin{split}
0&\leq\int_\Sigma|\nabla_\Sigma\psi|^2-\frac{1}{2}(\Sc(X,g)-\Sc(\Sigma,g)-H^2+|A|^2)\psi^2-(Hh+g(\nabla_Xh,\nu))\psi^2\\
&=\int_\Sigma|\nabla_\Sigma\psi|^2-\frac{1}{2}(\Sc(X,g)-\Sc(\Sigma,g)+H^2+|A|^2)\psi^2-g(\nabla_Xh,\nu)\psi^2\\
&\leq\int_\Sigma|\nabla_\Sigma\psi|^2-\frac{1}{2}(\Sc(X,g)-\Sc(\Sigma,g)+\frac{n}{n-1}h^2+2g(\nabla_Xh,\nu))\psi^2,
\end{split}
\end{displaymath}
where we used $|A|^2\geq\frac{H^2}{n-1}$ for the last inequality. By rearranging terms, we conclude
\begin{equation}\label{eq:key}
\int_\Sigma |\nabla_\Sigma\psi|^2+\frac{1}{2}\Sc(\Sigma,g)\psi^2 d\Ha^{n-1}\geq\int_\Sigma\frac{1}{2}(\Sc(X,g) +\frac{n}{n-1}h^2+2g(\nabla_Xh,\nu))\psi^2d\Ha^{n-1}.
\end{equation}

We are now ready to prove Proposition \ref{prop:comparison1}:

\begin{proof}[Proof of Proposition\ref{prop:comparison1}]
By assumption there is a structural band map $\phi:X\to[a,b]$. Thus $h=h_\varphi\circ\phi$ is a smooth function on $X$ and by assumption $H(\p_\pm X)>\pm h$. 
Since $n\leq7$ Lemma \ref{lem:exmean} yields a smooth minimizer $\Omega$ for the $\mathcal{A}_h$-functional, which contains a neighborhood of $\p_-X$ and is disjoint from $\p_+X$. 
Hence $\Sigma=\p\Omega$ separates $\p_-X$ and $\p_+X$ in $X$. 
Furthermore by \eqref{eq:key}
$$\int_\Sigma |\nabla_\Sigma\psi|^2+\frac{1}{2}\Sc(\Sigma,g)\psi^2 d\Ha^{n-1}\geq\int_\Sigma\frac{1}{2}(\Sc(X,g) +\frac{n}{n-1}h^2+2g(\nabla_Xh,\nu))\psi^2d\Ha^{n-1}$$
for any $\psi\in C^\infty(\Sigma)$. 
Since $\phi$ is structural  we have
$$\int_\Sigma |\nabla_\Sigma\psi|^2+\frac{1}{2}\Sc(\Sigma,g)\psi^2 d\Ha^{n-1}\geq\int_\Sigma\frac{1}{2\varphi^2(\phi)}Sc(N,g_N)\psi^2$$
and hence
$$-\Delta_{\Sigma}+\frac{1}{2}\Sc(\Sigma,g)\geq\frac{1}{2\varphi^2(\phi)}Sc(N,g_N).$$
\end{proof}

\begin{Bemerkung}\label{rem:sharp1}
Let $(X,g)$ be an oriented Riemannian band, $(M,g_\varphi)$ a warped product over $(N,g_N)$ and $\phi:X\to[a,b]$ a smooth band map. If $\varphi$ is strictly $\log$-concave, $\Sc(X,g)(x)\geq\Sc(M,g_\varphi(p_0,\phi(x))$ and $\phi$ has $\Lip(\phi)<1$, then $\phi$ is structural by Lemma \ref{lem:logconcavestructure}. 
As was observed in Remark \ref{rem:addition} we even get strict inequality in \eqref{eq:structural} in this case as $g(\nabla_Xh,\nu)>h'_\varphi$.
Hence the argument above yields
$$-\Delta_{\Sigma}+\frac{1}{2}\Sc(\Sigma,g)>\frac{1}{2\varphi^2(\phi)}Sc(N,g_N).$$
\end{Bemerkung}

\subsection{Constant Mean Curvature Surfaces}\label{section:CMC} If $\Omega$ is a smooth minimizer for the $\mathcal{A}_h$ functional, then $\Sigma=\p\Omega\cap\mathring{X}$ is often called a prescribed mean curvature (or short PMC) surface in the literature (see for example \cite{ZZ18}).
This terminology is based on the observation that $H(\Sigma)=h\bigr|_\Sigma$ by the first variation formula. In the following we assume $h$ to be a constant function. 
In this case $\Sigma$ is called a constant mean curvature (or short CMC) surface and our main goal is to understand what happens if we relax the strict boundary condition $H(\p_\pm X)>\pm h$ to $H(\p_\pm X)\geq\pm h$ in Lemma \ref{lem:exmean}.

In the proof of Lemma \ref{lem:exmean} the assumption $H(\p_\pm X)>\pm h$ was used to show that there is a minimizing sequence $\hat{\Omega}_k$ in $\mathcal{C}(X)$ which converges to a Caccioppoli set $\Omega\in\mathcal{C}(X)$. 
This fails if we relax to $H(\p_\pm X)\geq\pm h$, since it might happen that the limit $\Omega$ of any minimizing sequence $\hat{\Omega}_k$ in $\mathcal{C}(X)$ contains points of $\p_+X$ or does not contain a neighborhood of $\p_-X$ any more. However, for $h$ constant, we can use a strong maximum principle to address this issue.

To make this precise we slightly change our set up. Let $(X,g)$ be an oriented Riemannian band and $h$ be constant function on $X$.
Without loss of generality we can assume $h$ to be nonnegative (otherwise we just change the roles of $\p_-X$ and $\p_+X$).
For some $\delta>0$ we glue on collars $\p_-X\times(-\delta,0]$ and $\p_+X\times[0,\delta)$ on both sides of $X$ and extend the metric $g$ smoothly to produce a Riemannian manifold $(X_\delta,g_\delta)$. 
This can be done in such a way that $\vol(X_\delta,g_\delta)<\vol(X,g)+\delta$.\\
Let $\mathcal{C}(X_\delta)$ be the set of all Caccioppoli sets in $X_\delta$, which contain $\p_-X\times(-\delta,0]$ and are disjoint from $\p_+X\times(0,\delta)$.
We replace $\Ha^{n-1}(\p^*\hat{\Omega}\cap\mathring{X})$ by $\Ha^{n-1}(\p^*\hat{\Omega})$ in the $\mathcal{A}_h$-functional and define $\mathcal{I}_\delta:=\inf\{\mathcal{A}_h(\hat{\Omega})\bigr| \hat{\Omega}\in\mathcal{C}(X_\delta)\}$.

\begin{Proposition}\label{prop:strongmax}
Let $h\geq0$ be constant and $n\leq7$. If $H(\p_\pm X)\geq\pm h$ and $\Omega\in\mathcal{C}(X_\delta)$ is a minimizer ie $\mathcal{A}_h(\Omega)=\mathcal{I}_\delta$, then any connected component of $\p\Omega$ is either contained in $\mathring{X}$ or agrees with a connected component of $\p_-X$ resp. $\p_+X$.
\end{Proposition}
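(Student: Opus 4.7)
The plan is to apply the strong maximum principle for two hypersurfaces with ordered mean curvatures. First I would note that, since $n \leq 7$ and $h$ is smooth, the regularity result cited in the proof of Lemma \ref{lem:exmean} still applies to the minimizer $\Omega$ in $X_\delta$, so $\p\Omega$ is a smooth hypersurface in $X_\delta$. The definition of $\mathcal{C}(X_\delta)$ forces $\Omega$ to contain the open collar $\p_- X \times (-\delta, 0)$ and to avoid the open collar $\p_+ X \times (0, \delta)$, so $\p\Omega \subset X$; moreover, by the first variation formula (Lemma \ref{lem:var1nonwarped}), $\p\Omega$ has constant mean curvature $h$, with respect to the inward unit normal to $\Omega$, at every point of $\p\Omega \cap \mathring{X}$.

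Fix a connected component $\Sigma_0$ of $\p\Omega$ and a point $p \in \Sigma_0 \cap \p_+ X$. The first step would be to observe that $\Sigma_0$ is automatically tangent to $\p_+ X$ at $p$: since $\Sigma_0$ does not enter the open collar on the $+$ side, it lies locally on the $X$-side of $\p_+ X$, which is only compatible with $p \in \Sigma_0 \cap \p_+ X$ if $T_p \Sigma_0 = T_p \p_+ X$. In Fermi coordinates $(y, s)$ with $\p_+ X = \{s = 0\}$ and $X$-side $\{s > 0\}$, I would then write $\Sigma_0 = \{s = u(y)\}$ locally near $p$ with $u \geq 0$, $u(p) = 0$, $\nabla u(p) = 0$, and $\Omega = \{s \geq u(y)\}$ near $p$. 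The inward unit normal to $\Omega$ along $\Sigma_0$ and the inward unit normal to $X$ along $\p_+ X$ both point in the $+s$ direction, so with this common normal convention $H(\Sigma_0) = h$ while $H(\p_+ X) \geq h$. The Hopf strong maximum principle applied to the quasilinear mean curvature equation for the graph then forces $u \equiv 0$ in a neighborhood of $p$, i.e.\ $\Sigma_0 = \p_+ X$ locally near $p$.

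The analogous argument at a point $p \in \Sigma_0 \cap \p_- X$ uses Fermi coordinates with $s > 0$ on the $X$-side of $\p_- X$; now $\Omega$ contains the collar $\{s < 0\}$, so locally $\Omega = \{s \leq u(y)\}$ with $u \geq 0$ and $u(p) = 0$. Hence the inward unit normal to $\Omega$ along $\Sigma_0$ points in the $-s$ direction, opposite to the inward unit normal to $X$ along $\p_- X$. These two sign flips combine so that, with respect to the common $+s$ normal, $H(\Sigma_0) = -h \leq H(\p_- X)$, and the Hopf lemma again forces $\Sigma_0 = \p_- X$ locally near $p$. Together these two statements show that $\Sigma_0 \cap \p_\pm X$ is open in $\Sigma_0$, and since it is also closed by continuity, the connectedness of $\Sigma_0$ together with disjointness of $\p_- X$ and $\p_+ X$ yields the stated trichotomy. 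The main technical difficulty will be to keep the conflicting sign conventions for mean curvature and inward normal consistent between the two cases; once tangency from the one-sided constraint is verified, the application of the Hopf maximum principle for hypersurfaces with ordered mean curvatures is entirely standard.
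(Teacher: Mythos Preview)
Your overall strategy---compare mean curvatures and invoke a strong maximum principle at a touching point---is the same as the paper's, and your orientation bookkeeping for the two boundary pieces is fine. The gap is in the regularity step, and it undermines the rest of the argument.

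You claim that the regularity theorem cited in Lemma~\ref{lem:exmean} (namely \cite{ZZ18}*{Theorem 2.2}) ``still applies'' to the minimizer $\Omega$ in $X_\delta$ and gives a smooth $\p\Omega$. But that theorem is an \emph{interior} regularity result for unconstrained minimizers. Here $\Omega$ is minimizing under the obstacle constraint $\Omega\in\mathcal{C}(X_\delta)$, and at a point $p\in\p\Omega\cap\p X$ you cannot vary $\Omega$ freely in both directions---pushing $\p\Omega$ across $\p_+X$ (or pulling it back through $\p_-X$) leaves the admissible class. So interior regularity says nothing at such $p$, and you have no a priori reason to know that $\p\Omega$ is even $C^1$ there, let alone smooth. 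The paper confronts this head-on by invoking the obstacle-problem regularity of \cite{HI01}*{Theorem 1.3}, which yields only $C^{1,1/2}$.

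This regularity gap then propagates: the identity $H(\Sigma_0)=h$ comes from the \emph{two-sided} first variation and is only available on $\Sigma_0\cap\mathring{X}$, i.e.\ on $\{u>0\}$ in your graph coordinates. At the touching point you have at best a one-sided inequality, and the classical Hopf lemma you want to apply requires the differential inequality on a full neighborhood of $p$, not just on the coincidence complement. This is exactly why the paper does not use the smooth Hopf lemma directly. For the $\p_+X$ side it appeals to the varifold strong maximum principle \cite{Wh10}*{Theorem 7}, which handles the low-regularity minimizer; for the $\p_-X$ side it runs the barrier/foliation argument of \cite{SW89}, first reducing to a strict mean-curvature touching via a calibration-type inequality and then foliating by controlled-mean-curvature leaves. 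Your sketch would become correct if you replaced ``smooth regularity plus classical Hopf'' by these obstacle-adapted tools; as written, the step ``Hopf strong maximum principle \dots\ forces $u\equiv0$'' is not justified.
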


\begin{proof}
Before we start we invoke a regularity result from \cite{HI01}*{Theorem 1.3} which in turn is based on \cite{Ta84}. It implies that $\p\Omega$ is a $C^{1,\frac{1}{2}}$ submanifold of $X$. Hence $\p^*\Omega=\p\Omega$ and $\Omega$ has a $C^{0,\frac{1}{2}}$ outer unit normal vector field $\nu$.
We assume that $\p\Omega$ is connected. Otherwise we treat each connected component separately.

For $h=0$ the statement follows directly from the strong maximum principle for minimal hypersurfaces (see \cite{Wh10}*{Theorem 4} or \cite{SW89}) applied to $\p^*\Omega$.
If $h>0$, we can apply the strong maximum principle for hypersurfaces of bounded variation \cite{Wh10}*{Theorem 7} to see that $\p\Omega$ can only touch a component of $\p_+X$ if they agree. Since these result hold for general varifolds we did not yet use the a priori regularity of $\p\Omega$.

To see that $\p\Omega$ can only touch a component of $\p_-X$ if they agree, we follow the standard recipe for proving a strong maximum principle as it is explained in \cite{SW89} and \cite{Wh10}.
First we assume that there is a point $p\in\p_-X$ with $H(\p_-X,g)(p)>\eta>-h$.
By \cite{Wh10}*{Theorem 2} there is a compactly supported vector field $V$ on $X_\delta$ such that $V(p)$ is a nonzero normal to $\p_-X$ and
$$\int_{\p\Omega}\dive_{\p\Omega}Vd\Ha^{n-1}+\int_{\p\Omega}\eta|V|d\Ha^{n-1}\leq0.$$
If we choose the support of $V$ small enough and assume that $\p\Omega$ touches $\p_-X$ at $p$ ie $p\in\p\Omega$ and the normal vectors coincide, then
$$0\leq\int_{\p\Omega}\dive_{\p\Omega}Vd\Ha^{n-1}-\int_{\p\Omega}hg(V,\nu)d\Ha^{n-1}<\int_{\p\Omega}\dive_{\p\Omega}Vd\Ha^{n-1}+\int_{\p^*\Omega}\eta|V|d\Ha^{n-1}\leq0,$$
which is a contradiction. Hence $p\notin\p\Omega$.

Now let $p\in\p_-X$ be arbitrary and assume that $\p\Omega$ touches $\p_-X$ at $p$ but does not coincide with $\p_-X$ in any neighborhood of $p$.
We proceed as in \cite{SW89}*{Step 1, p.687} (see also the comments at the end of \cite{SW89}) and use the implicit function theorem (see \cite{Wh87}*{Appendix}) to foliate a neighborhood of $p$ in $X_\delta$ by hypersurfaces with controlled mean curvature.
By the Hopf maximum principle there is such a hypersurface with mean curvature strictly bigger than $-h$, which lies on the same side of $\p\Omega$ as $\p_-X$ and touches $\p\Omega$ but they do not coincide. This leads to a contradiction as we have seen before. Hence $\p\Omega$ and $\p_-X$ agree on a neighborhood of $p$. Using a standard open-closed-connected argument we conclude that $\p\Omega=\p_-X$ and $H(\p_-X,g)=-h$.
\end{proof}

\begin{Lemma}\label{lem:exconst}
If $h$ is constant, $n\leq7$ and $H(\p_\pm X)\geq\pm h$ on $\p_\pm X$, there is a smooth Caccioppoli set $\Omega\in\mathcal{C}(X_\delta)$, with $\mathcal{A}_h(\Omega)=\mathcal{I}_\delta$.
\end{Lemma}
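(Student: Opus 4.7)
The plan is to argue by the direct method of the calculus of variations. The essential difference from Lemma \ref{lem:exmean} is that, under the relaxed boundary inequality $H(\p_\pm X)\geq\pm h$, the strict descent \eqref{eq:min} that pushed minimizers off $\p_\pm X$ is no longer available. However, the class $\mathcal{C}(X_\delta)$ bakes the desired containment $\p_-X\times(-\delta,0]\subset\hat{\Omega}$ and the disjointness $\hat{\Omega}\cap\p_+X\times(0,\delta)=\emptyset$ directly into its definition. These are closed conditions that pass to $L^1$-limits, so no strict mean-curvature inequality is needed. The collar extension furthermore places the original boundary $\p_\pm X$ inside the interior $\mathring{X}_\delta$, so interior regularity applies uniformly across it without a separate boundary analysis.

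First I would check $\mathcal{I}_\delta\in\R$: since $h$ is a bounded constant and $\vol(X_\delta,g_\delta)<\infty$, the integral term $\int_{\hat{\Omega}}h\,d\Ha^n$ is uniformly bounded over $\mathcal{C}(X_\delta)$, giving $\mathcal{I}_\delta>-\infty$; and $\p_-X\times(-\delta,0]$ itself lies in $\mathcal{C}(X_\delta)$ with finite $\mathcal{A}_h$, so $\mathcal{I}_\delta<\infty$. A minimizing sequence $\hat{\Omega}_k$ then has uniformly bounded perimeter $\Ha^{n-1}(\p^*\hat{\Omega}_k)$, and BV compactness (\cite{Giu84}*{Theorems 1.19 \& 1.20}) produces an $L^1$-convergent subsequence with limit a Caccioppoli set $\Omega$ in $X_\delta$. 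The containment and disjointness conditions survive in the limit modulo null sets, and after selecting the appropriate representative they hold literally, so $\Omega\in\mathcal{C}(X_\delta)$. Lower semicontinuity of the perimeter together with $L^1$-continuity of $\int h\,d\Ha^n$ then gives $\mathcal{A}_h(\Omega)\leq\liminf_k\mathcal{A}_h(\hat{\Omega}_k)=\mathcal{I}_\delta$, hence equality.

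For smoothness, the interior regularity theorem \cite{ZZ18}*{Theorem 2.2} for prescribed mean curvature boundaries applies throughout $\mathring{X}_\delta$, which now contains $\p_\pm X$ as interior points, and yields smoothness of $\p\Omega$ away from a singular set of codimension at least $7$; for $n\leq 7$ this singular set is empty. I expect the main technical subtlety to be the passage of the containment conditions to the $L^1$-limit as genuine set-theoretic inclusions: one must confirm that the appropriate representative of $\Omega$ literally contains $\p_-X\times(-\delta,0]$ and is literally disjoint from $\p_+X\times(0,\delta)$, so that the subsequent application of Proposition \ref{prop:strongmax} and the regularity theory at points of $\p_\pm X$ are valid. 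This is a standard but slightly delicate representative-selection argument for BV functions, and it is precisely the step where the $X_\delta$ extension and the exact form of $\mathcal{C}(X_\delta)$ are essential.
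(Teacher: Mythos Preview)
Your existence argument via BV compactness is correct and matches the paper's. The gap is in the smoothness step. You assert that the interior regularity theorem \cite{ZZ18}*{Theorem 2.2} applies throughout $\mathring{X}_\delta$ because $\p_\pm X$ now lies in the interior of $X_\delta$. But the minimization over $\mathcal{C}(X_\delta)$ is \emph{constrained}: competitors must contain $\p_-X\times(-\delta,0]$ and avoid $\p_+X\times(0,\delta)$. At a point where $\p\Omega$ meets $\p_\pm X$ the minimizer is only one-sided (an obstacle problem), and the two-sided variations required by \cite{ZZ18}*{Theorem 2.2} are not admissible there. So that theorem does not directly yield smoothness at such points, and your appeal to it ``throughout $\mathring{X}_\delta$'' is not justified.

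The paper resolves this precisely by invoking Proposition~\ref{prop:strongmax} \emph{within} the proof of the lemma, not as a subsequent step: the strong maximum principle forces each connected component of $\p\Omega$ either to lie entirely in $\mathring{X}$ (where $\Omega$ is an unconstrained minimizer and \cite{ZZ18}*{Theorem 2.2} does apply) or to coincide with an entire component of $\p_\pm X$ (which is already smooth). You mention Proposition~\ref{prop:strongmax} only at the end, as something to be applied afterward, and identify the main subtlety as the $L^1$-limit passage; in fact the limit passage is routine, while the real content of the lemma is exactly this use of the maximum principle to separate the constrained and unconstrained regimes and thereby obtain smoothness.
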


\begin{proof}
Since $\vol(X_\delta,g_\delta)<\vol(X,g)+\delta$ the $\mathcal{A}_h$-functional is bounded from below on $\mathcal{C}(X_\delta)$.
By compactness for Caccioppoli sets there is a minimizer $\Omega\in\mathcal{C}(X_\delta)$.
By Proposition \ref{prop:strongmax} any connected component of $\p\Omega$ is either contained in $\mathring{X}$ and hence smooth by the regularity theorem \cite{ZZ18}*{Theorem 2.2} or agrees with a connected component of $\p_-X$ resp. $\p_+X$.
\end{proof}

Let $\Omega$ be the minimizer from Lemma \ref{lem:exconst} and $\Sigma=\p\Omega$.
It is important to note that $\Omega$ is only stationary and stable for variations which preserve $X$ which is the case if and only if the variation vector field has nonnegative scalar product with the interior normal vector fields to $\p_\pm X$. 
Let $\Sigma_0\subset\Sigma$ be a connected component. If $\Sigma_0\subset\mathring{X}$ all variation vector fields $V_\psi$ are admissible and we conclude $H(\Sigma_0)=h\bigr|_{\Sigma_0}$ by the first variation formula. By the second variation formula and stability \eqref{eq:key} holds for all $\psi\in C^\infty(\Sigma_0)$.

If $\Sigma_0$ agrees with a component of $\p_- X$ (the case $\Sigma_0\subset\p_+X$ follows in analogous fashion), we only consider variation vector fields $V_\psi$ with $\psi\geq0$. By the first variation formula
$$\int_{\hat{\Sigma}} (H-h)\psi d\Ha^{n-1}\geq0$$
for all nonnegative $\psi\in C^\infty(\Sigma_0)$. Since $(H-h)$ is negative on $\p_-X$ by assumption (remember Remark \ref{rem:convention2} ie $H(\Sigma_0)=-H(\p_-X)$) this implies $H(\Sigma_0)=h\bigr|_{\Sigma_0}$.

By stability and the second variation formula \eqref{eq:key} holds for all $\psi\in C^\infty(\Sigma_0)$ with $\psi\geq0$. Since the first eigenfunction of the operator
$$-\Delta_{\Sigma_0}+\frac{1}{2}\Sc(\Sigma_0,g)-\frac{1}{2}(\Sc(X,g) +\frac{n}{n-1}h^2+2g(\nabla_Xh,\nu))$$
does not change sign (follows from elliptic regularity and the Hopf maximum principle), this implies that the operator is nonnegative.

\begin{Bemerkung}\label{rem:sharp2}
Using Lemma \ref{lem:exconst}, together with the argument above, we can prove Proposition \ref{prop:comparison1} for constant $h_\varphi$ with the weakened boundary condition $H(\p_\pm X)\geq\pm h$.
\end{Bemerkung}

\subsection{Warped $\mu$-Bubbles} The following version of $\mu$-bubbles was introduced in \cite{ChLi20}*{Section 3}.
The results of this subsection will be used exclusively in Section \ref{section:last}.
Let $(X,g)$ be an oriented Riemannian band. Let $u>0$ be a smooth function on $X$ and $h$ be a smooth function on $\mathring{X}$ respectively $X$.
We fix a Caccioppoli set $\Omega_0$ with smooth boundary, which contains an open neighborhood of $\p_-X$ and is disjoint from $\p_+X$.
Hence all components of $\p\Omega_0$, which are not part of $\p_-X$ are contained in $\mathring{X}$. We consider
\begin{displaymath}
\mathcal{A}^{u}_h(\hat{\Omega})=\int_{\p^*\hat{\Omega}}ud\Ha^{n-1}-\int_X(\chi_{\hat{\Omega}}-\chi_{\Omega_0})hud\Ha^n
\end{displaymath}
for all Caccioppoli sets $\hat{\Omega}$ with $\hat{\Omega}\Delta\Omega_0$ contained in the interior of $X$ (this implies in particular, that $\hat{\Omega}$ contains an open neighborhood of $\p_-X$ and is disjoint from $\p_+X$).
A Caccioppoli set, which is minimizing $\mathcal{A}^{u}_h$ in this class, is called a \emph{warped $\mu$-bubble}. The following existence and regularity result is \cite{Zhu20}*{Proposition 2.1} and \cite{ChLi20}*{Proposition 12}.

\begin{Lemma}\label{lem:ex}
If $n\leq7$ and $h(x)\rightarrow\pm\infty$ as $x\rightarrow\p_\mp X$, there exists a smooth minimizer $\Omega$ for $\mathcal{A}^{u}_h$, such that  $\Omega\Delta\Omega_0$ is contained in the interior of $X$.
\end{Lemma}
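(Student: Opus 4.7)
The plan is to mimic the proof of Lemma \ref{lem:exmean}, using the blow-up of $h$ at $\p_\mp X$ as a substitute for the strict mean curvature condition $H(\p_\pm X) > \pm h$. The smooth positive weight $u$ only modifies the divergence computations by a controlled factor and does not alter the overall structure.

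First I would establish a barrier. For $t > 0$ small, foliate tubular neighborhoods $\Omega_-^t$ of $\p_-X$ and $\Omega_+^t$ of $\p_+X$ by smooth equidistant hypersurfaces $\Sigma_\pm^s$ with unit normal vector fields $\nu_\pm^s$ pointing toward $\p_+X$, exactly as in the proof of Lemma \ref{lem:exmean}. Because $u > 0$ is smooth and $h(x) \to \pm\infty$ as $x \to \p_\mp X$, the quantities $\dive(u\nu_-^s)$ and $\dive(u\nu_+^s)$ remain bounded on a fixed collar while $uh$ blows up, so by shrinking $t$ one can guarantee
\[
\dive(u\nu_-^s)(x) < u(x) h(x) \text{ on } \Omega_-^t, \qquad \dive(u\nu_+^s)(x) > u(x) h(x) \text{ on } \Omega_+^t.
\]
Applying the divergence theorem to the vector field $u\nu_\pm^s$ on $\Omega_-^t \setminus \hat\Omega$ and $\Omega_+^t \cap \hat\Omega$, in direct analogy with the computation following \eqref{eq:min}, one obtains
\[
\mathcal{A}^{u}_h\bigl((\hat\Omega \cup \Omega_-^t) \setminus \Omega_+^t\bigr) - \mathcal{A}^{u}_h(\hat\Omega) < 0
\]
for every admissible competitor $\hat\Omega$ that does not already satisfy $\Omega_-^t \subset \hat\Omega$ and $\Omega_+^t \cap \hat\Omega = \emptyset$. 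Taking $t$ small enough that $\Omega_0 \supset \Omega_-^t$ and $\Omega_0 \cap \Omega_+^t = \emptyset$, the symmetric difference $\hat\Omega \Delta \Omega_0$ is unaffected near $\p X$, so the modified competitor is still admissible.

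Second, it suffices to minimize over the subclass of admissible $\hat\Omega$ satisfying $\Omega_-^t \subset \hat\Omega$ and $\Omega_+^t \cap \hat\Omega = \emptyset$. On this subclass $|hu|$ is uniformly bounded, so $\mathcal{A}^{u}_h \geq -C \vol(X,g)$ for some constant $C$. Pick a minimizing sequence $\hat\Omega_k$; by compactness for Caccioppoli sets \cite{Giu84}*{Theorems 1.19 \& 1.20} a subsequence converges in $L^1$ to some Caccioppoli set $\Omega$ in the same subclass. The weighted perimeter $\hat\Omega \mapsto \int_{\p^*\hat\Omega} u\,d\Ha^{n-1}$ is lower semicontinuous because $u$ is positive and continuous, and the bulk term is continuous under $L^1$-convergence of the characteristic function by dominated convergence, since $hu$ is bounded on the admissible region. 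Hence $\Omega$ minimizes $\mathcal{A}^{u}_h$, and $\Omega \Delta \Omega_0$ is contained in $\mathring X$ by the barrier.

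Smoothness of $\p\Omega$ in dimensions $n \leq 7$ then follows from the standard regularity theory for almost-minimizers of weighted perimeter with smooth bounded bulk forcing, as cited in \cite{ZZ18}*{Theorem 2.2}. The main obstacle is the barrier step: one must trade the quantitative boundary mean curvature condition used in Lemma \ref{lem:exmean} for the qualitative blow-up of $h$, which succeeds precisely because $\dive(u\nu_\pm^s)$ stays bounded near $\p X$ while $uh$ does not. Once the barrier is in place, the existence proof is the usual direct method.
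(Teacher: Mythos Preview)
Your argument is correct and is essentially the approach taken in the references the paper cites for this lemma (\cite{Zhu20}*{Proposition 2.1} and \cite{ChLi20}*{Proposition 12}); the paper itself does not supply a proof but simply points to those sources. In fact, the paper's own proof of Lemma~\ref{lem:exmean} is presented as an adaptation of those same references, so your strategy of running that proof in reverse---replacing the strict boundary mean curvature hypothesis by the blow-up of $h$ and inserting the weight $u$ into the divergence computation---recovers precisely the original argument.
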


\begin{Lemma}[Warped first variation formula]\label{lem:var1}
For any smooth function $\psi$ on $\hat{\Sigma}$ let $V_\psi$ be a vector field on $X$, which vanishes outside a small neighborhood of $\hat{\Sigma}$ and agrees with $\psi\nu$ on $\hat{\Sigma}$. If we denote by $\Phi_t$ the flow generated by $V_\psi$, then
\begin{equation}\label{eq:var1}
\frac{d}{dt}\Bigr|_{t=0}\mathcal{A}^{u}_h(\Phi_t(\hat{\Omega}))=\int_{\hat{\Sigma}} (Hu+g(\nabla_Xu,\nu)-hu)\psi d\Ha^{n-1}.
\end{equation}
\end{Lemma}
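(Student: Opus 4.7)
The plan is to split $\mathcal{A}^u_h$ into a weighted perimeter term $\mathcal{P}(\hat\Omega) = \int_{\p^*\hat\Omega} u\, d\Ha^{n-1}$ and a weighted signed volume term $\mathcal{V}(\hat\Omega) = \int_X (\chi_{\hat\Omega} - \chi_{\Omega_0}) hu\, d\Ha^n$, differentiate each separately under the flow $\Phi_t$, and then combine. Since $\hat\Sigma$ is assumed smooth and $V_\psi$ is compactly supported in a tubular neighborhood of $\hat\Sigma$, the standard smooth first variation machinery applies and there are no regularity issues near $\p_\pm X$ or on the part of $\p\hat\Omega$ that is shared with $\p_- X$.

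First, for the weighted perimeter, I would use the classical formula
$$\frac{d}{dt}\bigg|_{t=0} \int_{\Phi_t(\hat\Sigma)} f\, d\Ha^{n-1} = \int_{\hat\Sigma} \bigl(Hf + g(\nabla_X f, \nu)\bigr)\psi\, d\Ha^{n-1},$$
valid for any smooth $f$ on $X$ (this is the Leibniz rule already invoked in the proof of Lemma \ref{lem:var2nonwarped}, and follows by combining the standard area-element variation $\partial_t d\Ha^{n-1}_t = H\psi\, d\Ha^{n-1}$ with the chain rule $\partial_t (f\circ\Phi_t) = g(\nabla_X f, V_\psi)$). Applying this with $f = u$ gives
$$\frac{d}{dt}\bigg|_{t=0} \mathcal{P}(\Phi_t(\hat\Omega)) = \int_{\hat\Sigma} \bigl(Hu + g(\nabla_X u, \nu)\bigr)\psi\, d\Ha^{n-1}.$$

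Second, for the weighted volume term I would use the basic fact (a consequence of the coarea formula or a direct change of variables under the flow) that for any smooth function $F$ on $X$,
$$\frac{d}{dt}\bigg|_{t=0} \int_{\Phi_t(\hat\Omega)} F\, d\Ha^n = \int_{\p^*\hat\Omega} F\, \psi\, d\Ha^{n-1}.$$
Here $\Omega_0$ is independent of $t$, so $\mathcal{V}(\Phi_t(\hat\Omega))$ differs from $\int_{\Phi_t(\hat\Omega)} hu\, d\Ha^n$ by a $t$-independent constant. Moreover $\psi$ has support away from $\p\hat\Omega \cap \p_- X$ so only the interior part $\hat\Sigma$ of $\p\hat\Omega$ contributes, giving
$$\frac{d}{dt}\bigg|_{t=0} \mathcal{V}(\Phi_t(\hat\Omega)) = \int_{\hat\Sigma} hu\,\psi\, d\Ha^{n-1}.$$

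Combining and taking the sign from the definition of $\mathcal{A}^u_h$ yields
$$\frac{d}{dt}\bigg|_{t=0} \mathcal{A}^u_h(\Phi_t(\hat\Omega)) = \int_{\hat\Sigma} \bigl(Hu + g(\nabla_X u,\nu) - hu\bigr)\psi\, d\Ha^{n-1},$$
which is exactly the claimed identity. There is no real obstacle here beyond bookkeeping: the only point to check carefully is that the term involving the fixed reference set $\Omega_0$ drops out of the derivative and that the $g(\nabla_X u, \nu)$ contribution, absent in the unweighted formula \eqref{eq:var1} of Lemma \ref{lem:var1nonwarped}, appears precisely because $u$ is now carried along the variation.
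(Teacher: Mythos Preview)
Your argument is correct. The paper actually states Lemma~\ref{lem:var1} without proof (as it does for the unwarped first variation, Lemma~\ref{lem:var1nonwarped}, and for the warped second variation, Lemma~\ref{lem:var2}), so there is nothing to compare against directly. Your derivation is the standard one and is fully consistent with the tools the paper uses elsewhere: the Leibniz rule you invoke for the weighted perimeter term is precisely the identity the paper records in the proof of Lemma~\ref{lem:var2nonwarped}, and the volume-variation step is routine. Nothing is missing.
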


\begin{Lemma}[Warped second variation formula]\label{lem:var2}
For any smooth function $\psi$ on $\hat{\Sigma}$ let $V_\psi$ be a vector field on $X$, which vanishes outside a small neighborhood of $\hat{\Sigma}$ and agrees with $\psi\nu$ on $\hat{\Sigma}$.
If we denote by $\Phi_t$ the flow generated by $V_\psi$, then
\begin{multline*}
\frac{d^2}{dt^2}\Bigr|_{t=0}\mathcal{A}^{u}_h(\Phi_t(\hat{\Omega}))= \int_{\hat{\Sigma}} |\nabla_{\hat{\Sigma}}\psi|^2u+(H^2-Ric(\nu,\nu)-|A|^2)\psi^2u +\\+(2Hg(\nabla_Xu,\nu)+\frac{d^2u}{d\nu^2}-Hhu-g(\nabla_X(hu),\nu)\psi^2,
\end{multline*}
which is equal to
\begin{equation}\label{eq:var2}
\int_{\hat{\Sigma}}|\nabla_{\hat{\Sigma}}\psi|^2u-\frac{1}{2}(\Sc(X,g)-\Sc(\hat{\Sigma},g)-H^2+|A|^2)\psi^2u+(2Hg(\nabla_Xu,\nu)+\frac{d^2u}{d\nu^2}-Hhu-g(\nabla_X(hu),\nu))\psi^2.
\end{equation}
\end{Lemma}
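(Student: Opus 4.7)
The plan is to differentiate the warped first variation formula \eqref{eq:var1} once more along the flow $\Phi_t$. Setting $F(t) = \mathcal{A}^u_h(\Phi_t(\hat\Omega))$, the first variation rewrites as
$$F'(t) = \int_{\hat\Sigma_t}\bigl(H_t u + g(\nabla_X u, \nu_t) - hu\bigr)\psi_t\, d\Ha^{n-1},$$
where $\hat\Sigma_t = \Phi_t(\hat\Sigma)$ and $\psi_t = g(V_\psi, \nu_t)$; I would extend $\psi$ to be constant along the flow lines so that at $t=0$ the computations only involve $\psi$ itself.

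Computing $F''(0)$ then rests on three ingredients, arranged exactly as in the proof of Lemma \ref{lem:var2nonwarped}. First, the Leibniz rule: for $f\in C^\infty(X)$,
$$\frac{d}{dt}\Bigr|_{t=0}\int_{\hat\Sigma_t} f\psi\, d\Ha^{n-1} = \int_{\hat\Sigma}\bigl(Hf + g(\nabla_X f, \nu)\bigr)\psi^2\, d\Ha^{n-1}.$$
Second, the integrated mean-curvature evolution identity
$$\int_{\hat\Sigma} g(\nabla_X H_{\hat\Sigma_t}, \nu)\psi^2\, d\Ha^{n-1} = \int_{\hat\Sigma}|\nabla_{\hat\Sigma}\psi|^2 - \bigl(\mathrm{Ric}(\nu,\nu) + |A|^2\bigr)\psi^2\, d\Ha^{n-1},$$
obtained by integrating the evolution equation for $H$ against $\psi^2$ and integrating by parts. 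Third, the Schoen--Yau Gauss-equation identity
$$2\,\mathrm{Ric}(\nu,\nu) = \Sc(X,g) - \Sc(\hat\Sigma,g) + H^2 - |A|^2.$$

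I would apply the Leibniz rule separately with $f = Hu$, $f = g(\nabla_X u, \nu)$, and $f = -hu$. The $Hu$-piece requires the mean-curvature evolution identity to process the $\partial_t H_t$ contribution; this produces $|\nabla_{\hat\Sigma}\psi|^2 u + (H^2 - \mathrm{Ric}(\nu,\nu)-|A|^2)\psi^2 u$ after the Laplacian term is integrated by parts. The $g(\nabla_X u, \nu)$-piece carries the new warped features: differentiating $\nabla_X u$ along the flow yields the Hessian term $\frac{d^2 u}{d\nu^2}\psi^2$, while differentiating $\nu_t$ combines with the Leibniz-rule contribution to give the cross term $2H\, g(\nabla_X u,\nu)\psi^2$, with the tangential-gradient pieces cancelling. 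The $-hu$-piece gives directly $-\bigl(Hhu + g(\nabla_X(hu),\nu)\bigr)\psi^2$. Summing produces the first displayed form of Lemma \ref{lem:var2}; using the Schoen--Yau identity to replace $H^2 - \mathrm{Ric}(\nu,\nu) - |A|^2$ by $-\tfrac{1}{2}(\Sc(X,g) - \Sc(\hat\Sigma,g) - H^2 + |A|^2)$ yields \eqref{eq:var2}.

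The main obstacle is the bookkeeping of the weight-$u$ terms: one must verify that the pieces produced by differentiating $\nabla_X u$ and $\nu_t$ combine \emph{exactly} into $2H g(\nabla_X u,\nu)\psi^2$ and $\frac{d^2 u}{d\nu^2}\psi^2$, with all tangential-gradient cross terms either cancelling or being absorbed by matching contributions from the Leibniz rule applied to the other factors. Beyond this, everything is a direct weighted extension of the computation already carried out for the non-warped second variation in Lemma \ref{lem:var2nonwarped}.
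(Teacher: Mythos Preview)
Your approach is correct and mirrors exactly what the paper does: the paper gives no separate proof for Lemma~\ref{lem:var2}, but the short proof of the unweighted version (Lemma~\ref{lem:var2nonwarped}) already lists precisely the three ingredients you invoke---the Leibniz rule for $\int_{\hat\Sigma_t} f\psi$, the integrated evolution identity for $H$, and the Schoen--Yau rewriting of $\mathrm{Ric}(\nu,\nu)$---and your proposal is the natural weighted extension of that computation.
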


\section{Proof of the Main Theorem}\label{section:comparisonresults}

In this section we prove parts \eqref{itm:A} and \eqref{itm:B} of our Main Theorem. We even establish the following slightly more general statements using the techniques from Section \ref{section:warpedproducts} and \ref{section:bubbles}.

\begin{Satz}\label{thm:comparisongeneralA}
Let $n\leq7$ and $(X^n,g)$ be an oriented Riemannian band with the property that no hypersurface $\Sigma$ which separates $\p_-X$ and $\p_+X$ has $-\Delta_\Sigma+\frac{1}{2}\Sc(\Sigma,g)>0$.
Let $(M,g_\varphi)$ be a model space over a scalar flat base with warping function $\varphi:[a,b]\to\R_+$. If $\varphi$ is strictly $\log$-concave,
\begin{itemize}
\item[$\triangleright$] $\Sc(X,g)\geq\Sc(M,g_\varphi)$,
\item[$\triangleright$] $H(\p_\pm X,g)\geq H(\p_\pm M,g_\varphi)$,
\end{itemize}
then $\wid(X,g)\leq\wid(M,g_\varphi)$.
\end{Satz}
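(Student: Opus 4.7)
The plan is to proceed by contradiction. Suppose $\wid(X,g) > \wid(M, g_\varphi) = b - a$. The goal is to produce a hypersurface $\Sigma \subset X$ separating $\partial_- X$ and $\partial_+ X$ with $-\Delta_\Sigma + \frac{1}{2}\Sc(\Sigma, g) > 0$, which contradicts the assumption on $X$. The main engine is Proposition \ref{prop:comparison1}, which delivers exactly such a hypersurface provided one has a structural band map and \emph{strict} mean curvature inequalities at $\partial_\pm X$. Since our hypotheses only give $\geq$ on the boundary, the first technical move is to replace $(M, g_\varphi)$ by a slightly enlarged model space.

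To that end I would extend $\varphi$ to a strictly $\log$-concave warping function on a slightly larger interval. Because $(M, g_\varphi)$ is a model space over a scalar flat base, $\varphi$ satisfies the second order ODE
$$-2(n-1)\frac{\varphi''}{\varphi} - (n-1)(n-2)\left(\frac{\varphi'}{\varphi}\right)^2 = \Sc(M, g_\varphi)$$
coming from \eqref{eq:scwarped}. Since $\varphi$ is smooth and positive on the closed interval $[a,b]$ and $(\log\varphi)'' < 0$ throughout (including at the endpoints), standard ODE theory gives, for all sufficiently small $\e > 0$, a smooth extension $\tilde\varphi \colon [a - \e, b + \e] \to \R_+$ which solves the same ODE and remains strictly $\log$-concave. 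The warped product $(M', g_{\tilde\varphi}) = (N \times [a - \e, b + \e], \tilde\varphi^2 g_N + dt^2)$ is again a model space with $\Sc(M', g_{\tilde\varphi}) = \Sc(M, g_\varphi)$, and because $h_{\tilde\varphi}$ is strictly decreasing we get
$$H(\partial_- M', g_{\tilde\varphi}) = -h_{\tilde\varphi}(a-\e) < -h_\varphi(a) = H(\partial_- M, g_\varphi) \leq H(\partial_- X, g),$$
together with the symmetric inequality at $\partial_+$. Choosing $\e$ small enough we also arrange $\wid(X, g) > \wid(M', g_{\tilde\varphi}) = b - a + 2\e$.

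Next I would feed $(X,g)$ and $(M', g_{\tilde\varphi})$ into Lemma \ref{lem:3condstructure}: all four hypotheses (strict $\log$-concavity of $\tilde\varphi$, constant scalar curvature of $M'$, $\Sc(X,g) \geq \Sc(M', g_{\tilde\varphi})$, $\wid(X,g) > \wid(M', g_{\tilde\varphi})$) are in place, and the lemma yields a structural band map $\phi \colon X \to [a - \e, b + \e]$ which by construction (via Lemma \ref{lem:map}) has $\Lip(\phi) < 1$. Combined with the strict mean curvature comparison established above, Proposition \ref{prop:comparison1} now produces a hypersurface $\Sigma \subset X$ separating $\partial_- X$ and $\partial_+ X$ with
$$-\Delta_\Sigma + \frac{1}{2}\Sc(\Sigma, g) \geq \frac{1}{2\tilde\varphi^2(\phi)}\Sc(N, g_N) = 0,$$
where the right hand side vanishes because $(N, g_N)$ is scalar flat. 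Finally, because $\tilde\varphi$ is strictly $\log$-concave and $\Lip(\phi) < 1$, Remark \ref{rem:sharp1} upgrades this to the strict inequality $-\Delta_\Sigma + \tfrac{1}{2}\Sc(\Sigma, g) > 0$, contradicting the hypothesis on $X$.

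The main obstacle I anticipate is keeping the strictness intact through the two perturbative steps: the extension $\tilde\varphi$ has to be engineered so that the mean curvatures of $\partial_\pm M'$ move strictly in the right direction (this is where strict $\log$-concavity of the original $\varphi$ is indispensable — a merely $\log$-concave $\varphi$ could be $\log$-constant near an endpoint, and no enlargement of the interval would loosen the boundary condition), and the structural band map must be produced with $\Lip(\phi) < 1$ rather than $\leq 1$, since the final contradiction relies on the strict version of \eqref{eq:structural} highlighted in Remark \ref{rem:addition} and Remark \ref{rem:sharp1}. Both points are benign once $\wid(X,g) > \wid(M, g_\varphi)$ is assumed with strict inequality, but they are the precise place where this strict width gap is spent.
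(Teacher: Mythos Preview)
Your proposal is correct and follows essentially the same route as the paper's proof: assume $\wid(X,g)>b-a$, enlarge the model space to $N\times[a-\e,b+\e]$ (the paper simply writes $(M_\e,g_\varphi^\e)$ without spelling out the ODE extension you provide), use strict $\log$-concavity to turn the boundary comparison into a strict one, then invoke Lemma~\ref{lem:3condstructure} for a structural map with $\Lip(\phi)<1$ and Proposition~\ref{prop:comparison1} together with Remark~\ref{rem:sharp1} to produce a separating $\Sigma$ with $-\Delta_\Sigma+\tfrac{1}{2}\Sc(\Sigma,g)>0$. The only difference is that you make the extension step explicit, which is a welcome clarification.
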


\begin{proof}
If we assume for a contradiction that $\wid(X,g)>\wid(M,g_\varphi)=b-a$, there is a small $\e>0$, such that $\wid(X,g)>b-a+2\e$.
Let $(M_\e, g^\e_\varphi)$ be the model space $$\left(N\times[a-\e,b+\e],\varphi^2g_N+dt^2\right).$$
We compare $(X,g)$ and $(M_\e, g^\e_\varphi)$.
According to Lemma \ref{lem:3condstructure} there is a structural map $\phi:(X,g)\rightarrow[a-\e,b+\e]$ with $\Lip(\phi)<1$. Since $\varphi$ is strictly $\log$-concave and $H(\p_\pm X,g)\geq H(\p_\pm M,g_\varphi)$ we have $H(\p_\pm X,g)>H(M_\e, g^\e_\varphi)$.
Proposition \ref{prop:comparison1}, together with Remark \ref{rem:addition} and Remark \ref{rem:sharp1}, implies the existence of a hypersurface $\Sigma$, which separates $\p_-X$ and $\p_+X$ and has $-\Delta_\Sigma+\frac{1}{2}\Sc(\Sigma,g)>0$. This is a contradiction.
\end{proof}

Theorem \ref{thm:comparisongeneralA} implies part \eqref{itm:A} of Theorem \ref{MainTheorem} with the help of the following classical result of Kazdan-Warner \cite{KW75} and Schoen-Yau \cite{SY79}:

\begin{Lemma}\label{lem:conf}
Let $(\Sigma^{n\geq2},g)$ be a closed connected oriented manifold. If $-\Delta_\Sigma+\frac{1}{2}\Sc(\Sigma,g)$ is positive, then $\Sigma$ admits a metric with positive scalar curvature.
\end{Lemma}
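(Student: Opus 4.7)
The plan is to split into the cases $n=2$ and $n\geq 3$, and in the latter case produce a conformally related metric of positive scalar curvature via the classical trick of Kazdan--Warner and Schoen--Yau.

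For $n=2$, I would test positivity of $-\Delta_\Sigma+\frac{1}{2}\Sc(\Sigma,g)$ against the constant function $\psi\equiv 1$ to obtain $\int_\Sigma \Sc(\Sigma,g)\,d\Ha^2 > 0$. Combined with the identity $\Sc=2K$ and the Gauss--Bonnet theorem, this forces $\chi(\Sigma) > 0$, so $\Sigma \cong S^2$, which carries the round metric of positive scalar curvature.

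For $n\geq 3$, standard elliptic theory provides a smooth positive first eigenfunction $u>0$ with eigenvalue $\lambda>0$, i.e.
$$-\Delta_\Sigma u + \tfrac{1}{2}\Sc(\Sigma,g)\,u = \lambda u.$$
The aim is to produce $v>0$ with $L_g v > 0$ for the conformal Laplacian $L_g := -\Delta_\Sigma + \frac{n-2}{4(n-1)}\Sc(\Sigma,g)$, because then the conformal metric $\tilde g := v^{4/(n-2)}g$ satisfies
$$\Sc(\Sigma,\tilde g) \;=\; v^{-(n+2)/(n-2)}\cdot\frac{4(n-1)}{n-2}\,L_g v \;>\; 0.$$
The key step is to set $\alpha := \frac{n-2}{2(n-1)} \in (0,1)$ and $v := u^\alpha$. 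Substituting $\Delta_\Sigma u = \tfrac{1}{2}\Sc(\Sigma,g)\,u - \lambda u$ into the standard expression $\Delta_\Sigma u^\alpha = \alpha(\alpha-1)u^{\alpha-2}|\nabla u|^2 + \alpha u^{\alpha-1}\Delta_\Sigma u$, a short calculation yields
$$L_g v \;=\; \alpha(1-\alpha)\,u^{\alpha-2}|\nabla u|^2 \;+\; \alpha\lambda\,u^\alpha \;>\; 0,$$
where both terms on the right are manifestly nonnegative and the second is strictly positive.

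The only conceptual point — and the main obstacle to a naive approach — is bridging the gap between the coefficient $\frac{1}{2}$ appearing in the stability/hypersurface operator and the strictly smaller coefficient $\frac{n-2}{4(n-1)}$ of the conformal Laplacian. The exponent $\alpha$ is chosen precisely so that $\alpha/2$ equals this latter coefficient, and the constraint $\alpha<1$ (which holds for every $n\geq 3$) is exactly what ensures that the gradient term $\alpha(1-\alpha)u^{\alpha-2}|\nabla u|^2$ contributes with the correct sign rather than against the eigenvalue term.
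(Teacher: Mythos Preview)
Your proof is correct. The $n=2$ case is identical to the paper's, and for $n\geq 3$ both arguments end with the same conformal-change formula $\Sc(\Sigma,\tilde g)=v^{-(n+2)/(n-2)}\tfrac{4(n-1)}{n-2}L_gv>0$. The only difference lies in how the coefficient gap between $\tfrac{1}{2}$ and $\tfrac{n-2}{4(n-1)}$ is bridged: the paper observes directly that $L_g$ is itself a positive operator (writing its quadratic form as a convex combination of the given positive form and $\int|\nabla\psi|^2$) and then takes $v$ to be the first eigenfunction of $L_g$; you instead keep the first eigenfunction $u$ of the original operator and pass to $v=u^{\alpha}$ with $\alpha=\tfrac{n-2}{2(n-1)}$, computing $L_gv>0$ explicitly. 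Your route is the Fischer-Colbrie--Schoen/Schoen--Yau power trick and is arguably more self-contained, while the paper's route is a line shorter once one sees why $L_g>0$; both are standard and equivalent in strength.
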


\begin{proof}
The proof is standard so we only recall  the main ideas. Since the operator is positive
$$
\int_\Sigma-\psi\Delta_g\psi+\frac{1}{2}\Sc(\Sigma,g)\psi^2>0
$$
for all $\psi\in C^2(\Sigma)$. If $n=2$ we choose $\psi\equiv1$ and use Gauss-Bonnet to see that 
$$
0<\int_\Sigma \frac{1}{2}\Sc(\Sigma,g)=2\pi\chi(\Sigma).
$$
It follows that $\Sigma$ is a 2-sphere and hence admits a metric with positive scalar curvature.\\
If $n\geq3$, we consider the conformal Laplacian $L_g=-\Delta_g+\frac{n-2}{4(n-1)}\Sc(\Sigma,g)$. It is easy to see that this operator is positive as well.
Hence the first eigenvalue $\lambda_1(L_g)$ is positive.
It follows from elliptic regularity and the strong maximum principle that the first eigenfunction $u\in C^\infty(\Sigma)$ can be chosen positive.\\
We then use this function for a conformal change of metric ie $\hat{g}=u^{\frac{4}{n-2}}g$. We conclude
$$
\Sc(\Sigma,{\hat{g}})=u^{-\frac{n+2}{n-2}}\frac{4(n-1)}{n-2} L_gu>0
$$
using the standard formula for scalar curvature under a conformal change of metric.
\end{proof}

\begin{Satz}\label{thm:comparisongeneralB}
Let $n\leq7$ and $(X^n,g)$ be an oriented Riemannian band with the property that no hypersurface $\Sigma$ which separates $\p_-X$ and $\p_+X$ admits a metric with positive scalar curvature.
Let $(M,g_\varphi)$ be a model space over a scalar flat base with warping function $\varphi:[a,b]\to\R_+$. If $\varphi$ is $\log$-constant, 
\begin{itemize}
\item[$\triangleright$] $\Sc(X,g)\geq\Sc(M,g_\varphi)$,
\item[$\triangleright$] $H(\p_\pm X,g)\geq H(\p_\pm M,g_\varphi)$,
\end{itemize}
then $(X,g)$ is isometric to a warped product
$$\left(\hat{N}\times[c,d], \varphi^2g_{\hat{N}}+dt^2 \right),$$
where $(\hat{N},g_{\hat{N}})$ is a closed scalar flat Riemannian manifold.
\end{Satz}

\begin{proof}
Let $\phi:X\to[a,b]$ be a band map.
According to Lemma \ref{lem:constantstructure} $\phi$ is structural.
Following the proof Proposition \ref{prop:comparison1}, together with Remark \ref{rem:sharp2}, we see that there is a hypersurface $\Sigma$, which separates $\p_-X$ and $\p_+X$ and has 
$$-\Delta_\Sigma+\frac{1}{2}\Sc(\Sigma,g)\geq\frac{1}{2}\left(\Sc(X,g) +h^2+|A|^2\right)\geq0,$$
where $h=h_\varphi\circ\phi$.

By our assumption on $X$, there is a connected component $\Sigma_0\subset\Sigma$ which does not admit a metric of positive scalar curvature. 
Considering Lemma \ref{lem:conf}, we conclude that the first eigenvalue of $-\Delta_{\Sigma_0}+\frac{1}{2}\Sc(\Sigma_0,g)$ is equal to zero.

If $w_1$ is the corresponding positive first eigenfunction, then 
$$\int_{\Sigma_0} \frac{1}{2}\left(\Sc(X,g) +h^2+|A|^2\right)w_1^2=0.$$
Consequently $\Sc(X,g)+h^2+|A|^2=0$ which is equivalent to $-h^2-|A|^2=\Sc(X,g)$. 

On the other hand $\Sc(X,g)\geq\Sc(M,g_\varphi)=-\frac{n}{n-1}h_\varphi^2=-\frac{n}{n-1}h^2$ by \eqref{eq:warpedODE}.
Since $|A|^2\geq\frac{H^2}{n-1}=\frac{h^2}{n-1}$ we conclude that $\Sc(X,g)=\Sc(M,g_\varphi)$ along $\Sigma_0$ and $|A|^2=\frac{H^2}{n-1}=\frac{h^2}{n-1}$.

Regarding $\Sc(\Sigma_0,g)$ we distinguish three cases: If $n=2$, the term $\Sc(\Sigma_0,g)$ does not appear.
If $n=3$, we choose $\psi\equiv1$ in
$$\int_{\Sigma_0} |\nabla_{\Sigma_0}\psi|^2+\frac{1}{2}\Sc(\Sigma_0,g)\psi^2\geq0.$$
By Gauss-Bonnet $\Sigma_0$ is a torus and $\Sc(\Sigma_0,g)=0$.

If $n>3$ we proceed as in \cite[p. 166]{SY79} and consider the first positive eigenfunction $w_2\in C^{\infty}(\Sigma_0)$ corresponding to the first eigenvalue $\lambda_0$ of the conformal Laplacian
$$L_g=-\Delta_{\Sigma_0}+\frac{(n-3)}{4(n-2)}\Sc(\Sigma_0,g).$$
By Lemma \ref{lem:conf} we see that $\lambda_0\leq0$. Hence
\begin{displaymath}
\frac{2(n-2)}{n-3}\int_{\Sigma_0}|\nabla_{\Sigma_0}w_2|^2=-\int_{\Sigma_0}\frac{1}{2}\Sc(\Sigma_0,g)w_2^2+\frac{2\lambda_0(n-2)}{n-3}\int_{\Sigma_0} w_2^2\leq\int_{\Sigma_0}|\nabla_{\Sigma_0}w_2|^2.
\end{displaymath}
Since $\frac{2(n-2)}{n-3}>1$ we see that $\lambda_0=0$ and $w_2$ is a constant function.
Consequently $\Sc(\Sigma_0,g)$ is constant as well. Since the first eigenvalue of $-\Delta_{\Sigma_0}+\frac{1}{2}\Sc(\Sigma_0,g)$ is equal to zero we conclude $\Sc(\Sigma_0,g)=0$.\\

It follows that the Jacobi operator associated to $\Sigma_0$ is 
$$-\Delta_{\Sigma_0}-({\rm Ric}(\nu,\nu)+|A|^2)=-\Delta_{\Sigma_0}-\frac{1}{2}(\Sc(X,g)-\Sc(\Sigma_0,g)+H^2+|A|^2)=-\Delta_{\Sigma_0}.$$
We follow the proof of \cite[Theorem 2.3]{ACG08} respectively \cite[Lemma 3.4]{Zhu20} and use the implicit function theorem to show that there is a foliation $\{\Sigma_s\}_{-\delta<s<\delta}$ around $\Sigma_0$ such that 
\begin{itemize}
\item[$\triangleright$] each $\Sigma_s$ is a graph over $\Sigma_0$ with graph function $u_s$ along the outward unit normal field $\nu$ with 
\begin{equation} \label{eq:fol}
\frac{d}{d s}\bigr|_{s=0}u_s=1\text{ and } \int_{\Sigma_0}u_sd\Ha^{n-1}=s;
\end{equation}\label{eq:graph}
\item[$\triangleright$] $H_s=H(\Sigma_s)-h$ is a constant function on $\Sigma_s$.
\end{itemize}

For $s\in[0,\delta)$ let $\Omega_s$ be the union of $\Omega$ and the region bounded by $\Sigma_0$ and $\Sigma_s$.
Since $\Omega$ minimizes the $\mathcal{A}_h$ functional, there is a $0<\delta'\leq\delta$ such that $H_s-h\geq0$ for all $s\in[0,\delta')$.
There are two possibilities: either $H_s-h>0$ for some $s\in[0,\delta')$ or $H_s-h=0$ for all $s\in[0,\delta')$.

In the first case we choose a constant $0\leq h<\hat{h}<H_s$ and consider the $\mathcal{A}_{\hat{h}}$-functional on the Riemannian band $\hat{X}$ bounded by $\Sigma_0$ and $\Sigma_s$. 
By Lemma \ref{lem:exmean} there is a smooth hypersurface $\hat{\Sigma}$ which separates $\Sigma_0$ and $\Sigma_s$ with $H(\hat{\Sigma})=\hat{h}$ and by stability and the second variation formula we see
$$-\Delta_{\hat{\Sigma}}+\frac{1}{2}\Sc(\hat{\Sigma},g)\geq\frac{1}{2}\left(\Sc(X,g)+\frac{n}{n-1}\hat{h}^2\right)>0.$$
We replace the component $\Sigma_0\subset\Sigma$ by $\hat{\Sigma}$ and restart our argument from the beginning. 
Note that we have reduced the number of components of $\Sigma$ we need to consider by one since $\hat{\Sigma}$ admits a metric with positive scalar curvature by Lemma \ref{lem:conf}.

In case $H_s-h=0$ for all $s\in[0,\delta')$ we show that $\Omega_s$ is a minimizer for the $\mathcal{A}_h$-functional as well:
$$\mathcal{A}_h(\Omega_s)-\mathcal{A}_h(\Omega)=\Ha^{n-1}(\Sigma_s)-\Ha^{n-1}(\Sigma)-\int_{\hat{X}}hd\Ha^{n}=\int_{0}^s\int_{\Sigma_t}f_t(H_t-h) d\Ha^{n-1}dt=0,$$
where $f_t=g(\frac{d}{dt}u_t,\nu_t)$ is the lapse function of $\Sigma_t$ moving along the foliation.

By stability and the second variation formula $-\Delta_{\Sigma_s}+\frac{1}{2}\Sc(\Sigma_s,g)\geq0$. Again, if the inequality is strict, we replace $\Sigma_0$ by $\Sigma_s$ and restart our argument, reducing the number of components $\Sigma$ we have to consider by one in the process.

Hence we assume that for all $s\in[0,\delta')$ the first eigenvalue of $-\Delta_{\Sigma_s}+\frac{1}{2}\Sc(\Sigma_s,g)$ is equal to zero.
We conclude that $\Sc(X,g)=\Sc(M,g_\varphi)$ along $\Sigma_s$ and 
$$|A_s|^2=\frac{H_s^2}{n-1}=\frac{h^2}{n-1}$$
ie all principal curvatures of $\Sigma_s$ are equal to $\frac{h}{n-1}$.
Furthermore $\Sc(\Sigma_s,g)=0$ and the Jacobi operator of $\Sigma_s$ is $-\Delta_{\Sigma_s}$.

With the foliation we can write the metric as $g=g_s+f_s^2ds^2$, where $g_s=g\bigr|_{\Sigma_s}$.
As the lapse function $f_s$ satisfies the Jacobi equation \cite[Equation (1.2)]{HI01}, which reduces to $\Delta_{\Sigma_s}f_s=0$, we see that $f_s$ is constant. By rescaling the $s$-coordinate if necessary we can assume $f_s=1$ and hence $\Sigma_s$ is $s$-equidistant to $\Sigma_0$. 
Since $\Sigma_s$ is umbilic for all $s\in[0,\delta')$ we conclude 
\begin{equation}\label{eq:metricproduct}
g=\exp(2s\frac{h}{n-1})g_0+ds^2
\end{equation}
in $\Sigma_0\times[0,\delta')$. In particular $\{\Sigma_s\}_{0\leq s<\delta'}$ is a geodesic flow starting from $\Sigma_0$.

Since $|A_s|^2=\frac{h^2}{n-1}$ for all $s\in[0,\delta')$ and $\Ha^{n-1}(\Sigma_s)\leq \mathcal{I}+h\vol(X,g)<\infty$, for some $s_k\to \delta'$ the sequence $\Omega_{s_k}$ resp. $\Sigma_{s_k}$ converges to a $\Omega_{\delta'}$ resp. $\Sigma_{\delta'}$. 
Since the $\Omega_{s_k}$ minimize the $\mathcal{A}_h$-functional, so does $\Omega_{\delta'}$.
Hence $\Omega_{\delta'}$ is smooth which implies that $\Sigma_{\delta'}$ does not intersect any component of $\Sigma\backslash\Sigma_0$.

By stability and the second variation formula $-\Delta_{\Sigma_{\delta'}}+\frac{1}{2}\Sc(\Sigma_{\delta'},g)\geq0$.
If the inequality is strict we replace $\Sigma_0$ by $\Sigma_{\delta'}$ and restart our argument.
As we always reduce the number of components of $\Sigma$ we need to consider by one, this process terminates and we can assume without loss of generality that $-\Delta_{\Sigma_i}+\frac{1}{2}\Sc(\Sigma_{i},g)>0$ for every component $\Sigma_i\subset\Sigma\backslash\Sigma_0$.

Hence the first eigenvalue of $-\Delta_{\Sigma_{\delta'}}+\frac{1}{2}\Sc(\Sigma_{\delta'},g)\geq0$ is equal to zero and the geodesic flow can be extended through $\delta'$.

By the above the geodesic flow starting from $\Sigma_0$ exists until it reaches $\p_+X$. Let $\{\Sigma_s\}_{0\leq s\leq d}$ be a maximal solution. As $\Sigma_d$ touches $\p_+X$ and $\Omega_d$ is a minimizer, the strong maximum principle Proposition \ref{prop:strongmax} implies that $\Sigma_d$ coincides with a connected component of $\p_+X$.

If we started out with $\Sigma_0\subset\p_-X$, then $X=\Sigma_0\times[0,d]$ since $X$ is connected. Furthermore \eqref{eq:metricproduct} holds true and we are done.
If $\Sigma_0\subset\mathring{X}$ or $\Sigma_0\subset\p_+X$ we consider the foliation $\Sigma_s$ for $s\in(-\delta,0]$.
We repeat the argument above (some signs change) to see that the geodesic flow in the direction of $-\nu$ exists until it meets $\p_-X$.
By the strong maximum principle and since $X$ is connected we conclude $X=\Sigma_0\times[c,d]$.
As \eqref{eq:metricproduct} holds this concludes the proof.
\end{proof}

Theroem \ref{thm:comparisongeneralB} implies part \eqref{itm:B} of Theorem \ref{MainTheorem}.
The last ingredient we need is the following observation appears in \cite[Theorem 2.3]{GroLa80} and is attributed to J. P. Bourguignon:

\begin{Proposition}
Let  $\Sigma$ be closed connected Riemannian manifold. If $\Sigma$ does not admit a metric with positive scalar curvature and $g$ is a Riemannian metric on $\Sigma$ with $\Sc(\Sigma,g)\geq0$, then $(\Sigma,g)$ is Ricci flat.
\end{Proposition}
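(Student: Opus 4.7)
The plan is to follow Bourguignon's original two-step strategy. First I would show that the hypotheses force $\Sc(\Sigma,g)\equiv 0$, and then use a short-time Ricci-flow deformation of $g$ to derive a contradiction whenever $(\Sigma,g)$ is not Ricci flat.

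For the reduction to the scalar-flat case, suppose for contradiction that $\Sc(\Sigma,g)\geq 0$ with $\Sc\not\equiv 0$. For $n\geq 3$ consider the conformal Laplacian $L_g=-\Delta_g+\frac{n-2}{4(n-1)}\Sc(\Sigma,g)$ from the proof of Lemma \ref{lem:conf}; since $\Sc\geq 0$ this operator is nonnegative. Its first eigenvalue $\lambda_1(L_g)$ is in fact strictly positive: if $\lambda_1=0$ with strictly positive first eigenfunction $u>0$ (by elliptic regularity and Hopf), then $L_gu=0$ combined with $\int_\Sigma\Delta_gu\,dg=0$ forces $\int_\Sigma\Sc\cdot u\,dg=0$, hence $\Sc\equiv 0$, a contradiction. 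With $\lambda_1(L_g)>0$ in hand, the conformal rescaling $\hat g=u^{4/(n-2)}g$ produces a metric with $\Sc>0$ as in the proof of Lemma \ref{lem:conf}, contradicting the hypothesis on $\Sigma$. The case $n=2$ is handled by Gauss--Bonnet: $\int_\Sigma \Sc>0$ would force $\Sigma=S^2$, which admits a metric with positive scalar curvature.

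Now assume $\Sc(\Sigma,g)\equiv 0$ but $(\Sigma,g)$ is not Ricci flat. Using Hamilton's short-time existence theorem, I would evolve $g$ by Ricci flow to obtain a smooth family $\{g_t\}_{t\in[0,\e)}$ with $g_0=g$ and $\partial_tg_t=-2\,\mathrm{Ric}(g_t)$. The standard evolution equation
$$\partial_t\Sc(g_t)=\Delta_{g_t}\Sc(g_t)+2|\mathrm{Ric}(g_t)|^2,$$
combined with $\Sc(g_0)\equiv 0$ and the scalar parabolic maximum principle (the reaction term being pointwise nonnegative), yields $\Sc(g_t)\geq 0$ for every $t\in[0,\e)$. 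Differentiating the total scalar curvature at $t=0$ gives
$$\frac{d}{dt}\Big|_{t=0}\int_\Sigma\Sc(g_t)\,dg_t=2\int_\Sigma|\mathrm{Ric}(g)|^2\,dg>0,$$
so for sufficiently small $t>0$ one has $\Sc(g_t)\geq 0$ and $\Sc(g_t)\not\equiv 0$. Applying the first step to $(\Sigma,g_t)$ produces a metric with positive scalar curvature on $\Sigma$, contradicting the hypothesis.

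The main technical point, and what I would verify most carefully, is the application of the parabolic maximum principle to a scalar quantity on a manifold whose metric is itself evolving; this is classical for Ricci flow but deserves a moment of care. Should this feel unsatisfactory, one can avoid Ricci flow entirely and instead consider the linear variation $g_t=g-t\,\mathrm{Ric}(g)$ and apply first-order perturbation theory to the simple eigenvalue $\lambda_1(L_g)=0$ (whose eigenfunction is the constant $u_0\equiv 1$). A direct computation using the twice-contracted Bianchi identity $\dive\dive\mathrm{Ric}=\tfrac{1}{2}\Delta\Sc$ and $\mathrm{tr}_g\mathrm{Ric}=\Sc(g)=0$ gives $\lambda_1'(0)=\tfrac{n-2}{4(n-1)}\,\vol(\Sigma,g)^{-1}\int_\Sigma|\mathrm{Ric}(g)|^2\,dg>0$, so $\lambda_1(L_{g_t})>0$ for small $t>0$, and a conformal rescaling via the first eigenfunction of $L_{g_t}$ again produces a PSC metric, yielding the contradiction.
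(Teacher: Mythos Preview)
The paper does not actually prove this proposition: it is stated with a citation to \cite{GroLa80}*{Theorem 2.3} and attributed there to Bourguignon, with no argument given. Your write-up is correct and is essentially the classical Bourguignon argument; in particular, the alternative you sketch at the end (the linear variation $g_t=g-t\,\mathrm{Ric}(g)$ together with first-order perturbation of $\lambda_1(L_g)$) is precisely Bourguignon's original deformation, while your primary Ricci-flow version is a standard and perfectly valid modern repackaging of the same idea (the evolution equation for $\Sc$ plus the maximum principle replace the explicit linearization). One small remark: in dimension $2$ you should note that once $\Sc\equiv 0$ the conclusion is immediate since $\mathrm{Ric}=\tfrac{1}{2}\Sc\, g$, so no deformation step is needed there.
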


\begin{Bemerkung}\label{rem:rigidity2}
The rigidity analysis in the Proof of Theorem \ref{thm:comparisongeneralB} could be adapted to prove rigidity of Theorem \ref{thm:comparisongeneralA} in case $\wid(X,g)=\wid(M,g_\varphi)$ if there was a way to guarantee the existence of a $\mu$-bubble in this situation. This is connected to Remark \ref{rem:rigidity} and Remark \ref{rem:weakness}.
\end{Bemerkung}

\begin{Bemerkung}\label{rem:2d2}
As we already alluded to in Remark \ref{rem:2d}, Theorem \ref{thm:comparisongeneralA} and Theorem \ref{thm:comparisongeneralB} apply to any oriented band $X$ in dimension $n=2$, as for any closed hypersurface $\Sigma$ (a collection of circles), which separates $\p_\pm X$, the operator $-\Delta_\Sigma+\frac{1}{2}\Sc(\Sigma,g)=-\Delta_\Sigma$ has first eigenvalue equal to zero.
\end{Bemerkung}

\section{Concerning separating hypersurfaces}\label{section:topology}

\begin{Lemma}\label{lem:sep}
Let $Y^{n-1}$ be a closed connected oriented manifold and $X=Y\times[-1,1]$. If $\Sigma$ is a closed embedded hypersurface in $X$, which separates $\p_-X$ and $\p_+X$, there is one connected component $\Sigma_0$ of $\Sigma$ that separates $\p_-X$ and $\p_+X$. 
\end{Lemma}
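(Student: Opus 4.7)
The plan is to choose the component of $X\setminus \Sigma$ containing $\p_-X$ and use a mod-$2$ intersection count against a generic vertical arc to locate a separating component of $\Sigma$.

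Let $U$ denote the connected component of $X\setminus \Sigma$ containing $\p_-X$; this is well defined because $\p_-X=Y\times\{-1\}$ is connected. Since $\Sigma$ separates $\p_-X$ from $\p_+X$, we have $\p_+X\cap\bar U=\emptyset$. A direct tubular-neighborhood argument will then show that the topological boundary $\p_X\bar U$ of $\bar U$ in $X$ is a disjoint union $\Sigma_{i_1}\sqcup\cdots\sqcup\Sigma_{i_m}$ of entire connected components of $\Sigma$: the set $\Sigma_i\cap\p_X\bar U$ is open-and-closed in each connected $\Sigma_i$ and is therefore either empty or all of $\Sigma_i$. Moreover each $\Sigma_{i_j}$ is two-sided with one side contained in $U$ and the other in a component of $X\setminus\Sigma$ disjoint from $\bar U$, and every component of $X\setminus\Sigma$ other than $U$ is contained in $X\setminus\bar U$.

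Next, I would pick $y_0\in Y$ generic so that the vertical arc $\alpha=\{y_0\}\times[-1,1]$ is transverse to $\Sigma$ and count $|\alpha\cap\p_X\bar U|$ modulo $2$. The two open sets $V:=\mathrm{int}_X(\bar U)$ and $X\setminus\bar U$ form a partition of $X\setminus\p_X\bar U$, with $\p_-X\subset V$ and $\p_+X\subset X\setminus\bar U$. Every transverse crossing of $\alpha$ with a component of $\p_X\bar U$ toggles $\alpha$ between $V$ and $X\setminus\bar U$, whereas crossings of $\alpha$ with components of $\Sigma\setminus\p_X\bar U$, which are contained entirely in $V$ or entirely in $X\setminus\bar U$, preserve the side. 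Since $\alpha$ begins in $V$ and ends in $X\setminus\bar U$, the total count $|\alpha\cap\p_X\bar U|$ is odd, and by pigeonhole at least one component $\Sigma_0:=\Sigma_{i_{j_0}}$ satisfies $|\alpha\cap\Sigma_0|$ odd.

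This parity equals the mod-$2$ intersection pairing $\langle[\Sigma_0],[\alpha]\rangle$ between $H_{n-1}(X;\Z/2)$ and $H_1(X,\p X;\Z/2)\cong\Z/2$, and is therefore a homotopy invariant of the arc $\alpha$. Any other arc from $\p_-X$ to $\p_+X$ represents the same generator of $H_1(X,\p X;\Z/2)$, so every such arc must meet $\Sigma_0$ at least once. Hence $\Sigma_0$ separates $\p_-X$ from $\p_+X$, as desired. The main obstacle I expect lies in the bookkeeping of the first step: one has to classify each connected component of $\Sigma$ according to how its (one or two) sides sit relative to $U$, handling two-sided and one-sided components separately, in order to verify both the description of $\p_X\bar U$ and the claim that components of $\Sigma$ not in $\p_X\bar U$ are entirely inside $V$ or entirely inside $X\setminus\bar U$; the subsequent pigeonhole and homological conclusions then go through cleanly.
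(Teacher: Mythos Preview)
Your argument is correct. Both you and the paper ultimately appeal to the intersection pairing between $H_{n-1}(X)$ and $H_1(X,\p X)$, but the routes differ. The paper works with integer coefficients: after discarding nonorientable components it asserts that a separating $\Sigma$ has nonzero algebraic intersection with every arc from $\p_-X$ to $\p_+X$, hence $[\Sigma]\neq 0$ in $H_{n-1}(X;\Z)\cong\Z$, and then picks a component with nonzero class. You instead first pass to the frontier $\p_X\bar U$ of the component $U$ containing $\p_-X$ and use a direct parity count along a vertical arc to locate a single component with odd mod-$2$ intersection number. Your preliminary reduction is not merely cosmetic: the paper's assertion that separation alone forces nonzero algebraic intersection is loose as written (two parallel copies of $Y\times\{0\}$ with opposite orientations separate but have class zero), and it is exactly the passage to $\p_X\bar U$ that guarantees the count comes out odd. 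Working over $\Z/2$ also lets you treat one-sided components uniformly rather than discarding them at the outset; as your bookkeeping shows, such components never lie in $\p_X\bar U$ and play no role in the pigeonhole step. One small omission: you should note at the start (as the paper does) that after a small isotopy one may take $\Sigma\subset\mathring X$, so that $\p_-X$ genuinely lies in $X\setminus\Sigma$ and $\p_+X\cap\bar U=\emptyset$.
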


\begin{proof}
Without loss of generality $\Sigma$ can be assumed to be oriented since nonorientable components are non-separating.
Furthermore we can assume that $\Sigma\subset\mathring{X}$ (otherwise we isotope $\Sigma$ by flowing along the interior unit normal vector field to $\p X$ for a short time).
The relative homology group $H_1(X,\p X)$ is generated by paths $\gamma:[0,1]\rightarrow X$ with $\gamma(0)\in\p_- X$ and $\gamma(1)\in\p_+ X$.
Since the hypersurface $\Sigma$ separates $\p_-X$ and $\p_+X$ it has nonzero algebraic intersection with every such path $\gamma$.
It follows by Lefschetz duality that $[\Sigma]\neq0\in H_{n-1}(X)\cong H_{n-1}(Y)=\Z$.
Of course $[\Sigma]$ is nothing but $[\Sigma_0]+\ldots+[\Sigma_m]$, where $\Sigma_i$ are the connected components of $\Sigma$.
Since $[\Sigma]\neq0$ it  follows that $[\Sigma_i]\neq0$ for some $i\in\{0,\ldots,m\}$ (w.l.o.g. we can assume $[\Sigma_0]\neq0$).
Going back, by Lefschetz duality, $\Sigma_0$ has nonzero algebraic intersection with any path $\gamma$ which connects $\p_-X$ and $\p_+X$ and therefore it separates $\p_-X$ and $\p_+X$.
\end{proof}

\begin{Lemma}\label{lem:proper_separating}
Let \(\Sigma \subset X\) be a separating hypersurface in a band \(X\). 
Then there exists a union of components of \(\Sigma\) which is a properly separating hypersurface in \(X\).
\end{Lemma}
\begin{proof}
  Suppose that \(\Sigma\) is a separating hypersurface that contains a component not connected to both \(\p_- X\) and \(\p_+ X\) inside \(X \setminus \Sigma\). 
  Then the hypersurface \(\Sigma'\) obtained from \(\Sigma\) by deleting this component is still a separating hypersurface.
  This shows that there is a minimal collection of components of \(\Sigma\) such that its union is still separating yields the desired properly separating hypersurface.
\end{proof}

\begin{Lemma}\label{lem:degsep}
Let $X^n$ be a connected oriented band and $X'=Y^{n-1}\times[-1,1]$, where $Y$ is a closed connected oriented manifold.
Let $f:X\to X'$ be a band map with $\deg(f)=d\neq0$ and $\Sigma^{n-1}$ be a closed embedded hypersurface in $X$, which separates $\p_-X$ and $\p_+X$.
Then there is one connected component $\Sigma_0$ of  $\Sigma$ such that the map $(\pr_Y\circ f):\Sigma_0\to Y$ has nonzero degree.
\end{Lemma}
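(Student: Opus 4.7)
The plan is to generalize the homological argument of Lemma~\ref{lem:sep} by pushing the separating-class identity forward through $\pr_Y\circ f\colon X\to Y$. The key observation is that the band-map degree $d$ is already detected after restriction to $\partial_- X$, and this nonvanishing will propagate to at least one component of the separating hypersurface.

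First, mirroring the start of Lemma~\ref{lem:sep}, I would isotope $\Sigma$ slightly off $\partial X$ so that $\Sigma\subset\mathring X$, thereby avoiding any corner issues in what follows. Next I let $\Omega\subset X$ be the union of connected components of $X\setminus\Sigma$ whose closure meets $\partial_- X$; by the separating hypothesis $\overline{\Omega}\cap\partial_+X=\emptyset$, and $\overline{\Omega}$ is therefore a compact oriented $n$-submanifold-with-boundary of $X$ whose boundary has the form $\partial\overline\Omega=\partial_-X\sqcup\Sigma^*$, with $\Sigma^*$ a nonempty union of components of $\Sigma$ (every component of $\Sigma^*$ is automatically orientable, being part of the boundary of the oriented $\overline\Omega$). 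Orienting $\Sigma^*$ as part of $\partial\overline\Omega$, the chain identity $\partial[\overline\Omega]=[\partial_-X]+[\Sigma^*]$ inside the singular chain complex of $X$ yields
$$
[\Sigma^*]=-[\partial_- X]\in H_{n-1}(X;\mathbb{Z}).
$$

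It then suffices to verify $(\pr_Y\circ f)_*[\partial_- X]\ne 0$ in $H_{n-1}(Y)\cong\mathbb{Z}$. Because $f$ is a band map of degree $d$, naturality of the connecting homomorphism applied to $f_*[X,\partial X]=d\,[X',\partial X']$, together with the splitting $\partial X=\partial_-X\sqcup\partial_+X$, gives $f_*[\partial_- X]=d\,[\partial_- X']=d\,[Y\times\{-1\}]$, and $\pr_Y$ restricts to the obvious orientation-preserving homeomorphism $Y\times\{-1\}\to Y$, so $(\pr_Y\circ f)_*[\partial_- X]=d\,[Y]\ne 0$. Combined with the displayed identity, $(\pr_Y\circ f)_*[\Sigma^*]=-d\,[Y]\ne 0$, and expanding $[\Sigma^*]=\sum_j[\Sigma_{i_j}]$ over its components forces $\deg(\pr_Y\circ f|_{\Sigma_{i_j}})\ne 0$ for at least one~$j$. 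The only subtlety I anticipate is making the chain-level identity for $\partial\overline\Omega$ genuinely clean, which is precisely what the preliminary push of $\Sigma$ into $\mathring X$ accomplishes; beyond this, everything is routine algebraic topology in the spirit of Lemma~\ref{lem:sep}.
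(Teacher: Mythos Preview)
Your argument is correct and follows essentially the same path as the paper: both proofs isotope $\Sigma$ into the interior, pass to the region $\Omega$ (the paper's $A$) comprising the components of $X\setminus\Sigma$ meeting $\partial_-X$, and extract the subcollection $\Sigma^*$ (the paper's $\Sigma'$) of components of $\Sigma$ forming the outer frontier of this region. The only noteworthy difference is how you compute $(\pr_Y\circ f)_*[\Sigma^*]$. The paper identifies $[\Sigma']$ as the Lefschetz dual of $f^*\alpha$ for $\alpha$ the generator of $H^1(X',\partial X')$ and then runs a duality diagram chase; you instead use the chain identity $[\Sigma^*]=-[\partial_-X]$ in $H_{n-1}(X)$ and push forward $[\partial_-X]$ directly via the boundary-degree calculation $f_*[\partial_-X]=d[\partial_-X']$. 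Your route is a bit more elementary in that it sidesteps Lefschetz duality entirely, and it also handles non-orientable components of $\Sigma$ automatically (they lie in the interior of $\overline\Omega$ or outside it, so never appear in $\Sigma^*$), whereas the paper disposes of them by a preliminary remark. Both arguments yield $(\pr_Y\circ f)_*[\Sigma^*]=\pm d[Y]\neq 0$, and the conclusion follows identically.
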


\begin{proof}
Without loss of generality $\Sigma$ can be assumed to be oriented, since nonorientable components are non-separating.
Furthermore we can assume that $\Sigma\subset\mathring{X}$ (otherwise we isotope $\Sigma$ by flowing along the interior unit normal vector field to $\p X$ for a short time).

By Lemma \ref{lem:proper_separating} there is a union of components of $\Sigma$ which is a properly separating hypersurface.
We denote this union of components by $\Sigma'$.
By construction every path $\gamma:[0,1]\to X$ with $\gamma(0)\in \p_-X$ and $\gamma(1)\in\p_+X$ has algebraic intersection number equal to one with $\Sigma'$.

Since $f$ is a band map $f\circ\gamma$ connects $\p_-X'$ and $\p_+X'$. It follows that $[\Sigma']$ is Lefschetz dual to $f^*\alpha$, where $\alpha$ is the generator of $H^1(X',\p X')\cong \Z$.

Consider the diagram:
\begin{displaymath}\begin{tikzcd}[row sep=large, column sep=large]
H^1(X,\partial X;\Z)\arrow[r, "\cap{[X,\partial X]}","\cong"'] 
& H_{n-1}(X;\Z)\arrow[d,"f_*"]\\
H^1(X',\p X';\Z)\arrow[u, "f^*"]\arrow[r,"\cap d{[X',\p X']}"] & H_{n-1}(X';\Z)\arrow[r,"\pr_{Y*}"] & H_{n-1}(Y;\Z).
\end{tikzcd}
\end{displaymath}
We conclude that $(\pr\circ f)_*[\Sigma']=d[Y]$.
Hence there is one connected component $\Sigma_0$ of $\Sigma'$ with $(\pr\circ f)_*[\Sigma']\neq0$. By construction $\Sigma_0$ is also a component of $\Sigma$.\end{proof}

\begin{Proposition}\label{prop:cobordism}
Let $Y$ be a closed connected oriented manifold of dimension $n-1\geq5$ and $X=Y\times[-1,1]$. Let $\Sigma_0$ be a closed connected oriented hypersurface separating $\p_-X$ and $\p_+X$ in $X$. If $\Sigma_0$ admits a metric with positive scalar curvature, then so does $Y$.
\end{Proposition}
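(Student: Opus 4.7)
The plan is to realize $X_-$, the closure of the component of $X\setminus\Sigma_0$ containing $\p_-X$, as an oriented $n$-dimensional cobordism from $Y$ to $\Sigma_0$ along which the Gromov--Lawson codimension-three surgery theorem can propagate a PSC metric from $\Sigma_0$ to $Y$. Since $\Sigma_0$ is connected, oriented (hence two-sided) and separating, $X\setminus\Sigma_0$ has exactly two components, so $X_-$ is a compact connected $n$-manifold with $\p X_-=Y\sqcup\Sigma_0$. The projection $\pr_Y\colon X\to Y$ restricts to a smooth retraction $r\colon X_-\to Y$ whose section is the inclusion $\iota\colon Y\hookrightarrow X_-$, $y\mapsto(y,-1)$; in particular $\iota_\ast$ is split injective on every homotopy group.

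From here, I would construct a Morse function $f\colon X_-\to[0,1]$ with $f^{-1}(0)=Y$, $f^{-1}(1)=\Sigma_0$, and all critical points of index $\ge 3$. Reversing the flow then exhibits the passage $\Sigma_0\rightsquigarrow Y$ as a sequence of surgeries along spheres of codimension $\ge 3$ in the corresponding level set, and iterating the Gromov--Lawson surgery theorem transports the PSC metric on $\Sigma_0$ step by step to produce a PSC metric on $Y$. Such an $f$ is obtained from an arbitrary Morse function by applying Smale-style handle cancellation, which is available because $\dim X_-=n\ge 6$: the $0$-handles cancel against $1$-handles by connectedness of $X_-$ and $Y$, and the remaining $1$- and $2$-handles are traded against higher-index handles using the retraction $r$ together with the ambient embedding $X_-\subset Y\times[-1,1]$.

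The principal obstacle is precisely this low-index handle cancellation. The retraction $r$ forces $\iota_\ast$ to be split injective but not, a priori, surjective on $\pi_1$ or $\pi_2$, so one must produce disks and balls bounding the offending attaching spheres in $X_-$. Here the specific nature of the embedding comes into play: since $X_-\subset Y\times[-1,1]$ and $Y\times[-1,1]\simeq Y$, every extra loop or $2$-sphere in $X_-$ becomes null-homotopic in the ambient product, and the dimensional bound $n\ge 6$ is exactly what allows the corresponding bounding disks and $3$-balls to be made transverse and perturbed into $X_-$ itself so as to serve as cancellation data for the low-index handles.
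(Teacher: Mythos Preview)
Your overall strategy matches the paper's: realize $X_-$ as a cobordism $Y \rightsquigarrow \Sigma_0$, exploit the retraction $r = \pr_Y|_{X_-}$, and appeal to the Gromov--Lawson surgery theorem. The gap is in the handle-cancellation step.

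To obtain a Morse function on the \emph{fixed} cobordism $X_-$ with all critical points of index $\geq 3$, you would need the pair $(X_-, Y)$ to be $2$-connected. The retraction guarantees that $\iota_*$ is split injective on every homotopy group, but not surjective; there is no a priori reason why $\ker(\pi_1 r)$ or $\ker(\pi_2 r)$ should vanish. Your proposed remedy --- bound the extra loops and $2$-spheres by disks and $3$-balls in $Y \times [-1,1]$ and then perturb these into $X_-$ --- fails on a dimension count: a generic $2$-disk meets the hypersurface $\Sigma_0^{n-1}$ in a $1$-manifold (since $2 + (n-1) - n = 1$), and a generic $3$-ball meets it in a surface. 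No value of $n$ makes these intersections generically empty, so the bounding objects cannot simply be pushed off $\Sigma_0$ into $X_-$, and the low-index handles cannot be cancelled within $X_-$ itself.

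The paper resolves this not by rearranging handles but by performing surgery in the \emph{interior} of the cobordism, thereby changing it. One first kills generators of $\ker(\pi_1 r)$ by surgery on embedded circles (trivial normal bundle by orientability), then kills $\ker(\pi_2 r_1)$ by surgery on embedded $2$-spheres. Here the codimension-$0$ embedding into $Y \times [-1,1]$ is used, but only to produce a stable trivialization of $r^*\nu(Y) \oplus TW$, from which triviality of the normal bundles of the $2$-spheres is deduced; finiteness of the set of surgeries on $\pi_2$ also requires a separate argument. The resulting cobordism $W_2$ (which need no longer embed in $Y \times [-1,1]$) has $r_2 : W_2 \to Y$ $3$-connected, hence $\iota : Y \hookrightarrow W_2$ is $2$-connected, and now the Gromov--Lawson argument transports the PSC metric from $\Sigma_0$ to $Y$.
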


\begin{proof}
The proof uses standard results and ideas from high dimensional topology.
We can assume that $\Sigma_0\subset\mathring{X}$ (otherwise we isotope $\Sigma_0$ by flowing along the interior normal vector field to $\p X$ for a short time).

We want to see that $Y$ can be obtained from $\Sigma_0$ by a finite sequence of surgeries in codimension $\geq3$ and hence, by the well known argument of Gromov and Lawson \cite[Theorem A]{GroLa}, a positive scalar curvature metric on $\Sigma_0$ can be transported to $Y$.
See \cite{EF18} for full details of the proof of \cite[Theorem A]{GroLa}.

We denote by $W$ the connected component of $X\backslash\Sigma_0$ which contains $\p_-X$. Then $W$ is a cobordism $W:Y\rightsquigarrow \Sigma_0$. 
We restrict the projection $X\rightarrow Y$ to $W$ and obtain a retract map $r:W\rightarrow Y$. 

\begin{claim}
The cobordism $W:Y\rightsquigarrow \Sigma_0$ and the retract map $r:W\rightarrow Y$ can be improved via surgery in the interior of $W$ to a cobordism $W_2:Y\rightsquigarrow \Sigma_0$ with a retract map $r_2:W_2\rightarrow Y$, which is 3-connected.
The inclusion $\iota:Y\hookrightarrow W_2$ will be 2-connected since $\iota\circ r_2=id_M$.
\end{claim}

\begin{proof}[Proof of Claim]
If $\nu(Y)$ denotes the stable normal bundle of $Y$, there is a stable trivialization of $r^*\nu(Y)\oplus TW$.
Since $r$ is a retract map the induced map $\pi_1(r):\pi_1(W)\rightarrow\pi_1(Y)$ is already surjective and its kernel is finitely generated as a normal subgroup of $\pi_1(W)$, since $\pi_1(W)$ is finitely generated and $\pi_1(Y)$ is finitely presented (see \cite[Lemma 3.2]{SZ20}). 
Let $\alpha$ be a generator of $\ker(\pi_1(r))$. Since $1<n/2$ we can represent $\alpha$ by an embedding $S^1\hookrightarrow \mathring{W}$. 
Since $W$ is oriented the normal bundle of this embedding is trivial and hence we can kill $\alpha$ by surgery in the interior of $W$. 
We obtain a cobordism $W_\alpha:Y\rightsquigarrow\Sigma_0$ and a retract map $r_\alpha:W_\alpha\rightarrow Y$. 
After repeating this step finitely many times we end up with $W_1:Y\rightsquigarrow\Sigma_0$ and a retract map $r_1:W_1\rightarrow Y$, which is 2-connected.

Next we need to kill the kernel of $\pi_2(r_1):\pi_2(W_1)\rightarrow\pi_2(Y)$. 
In order to do so one has to argue that this is possible with finitely many surgeries along elements of $\ker(\pi_2(r_1))$.
We proceed similarily as in the proof of \cite[Proposition 3.1]{SZ20}, which in turn is based on \cite[Lemma 5.6]{St98} and \cite[Lemma 1.1]{Wa65}. 
Since $Y$ and $W_1$ are compact manifolds, if one starts with a handle decomposition of $W_1$ relative to $Y$ one can use handle cancellation \cite{Wa71} and the fact that $Y\hookrightarrow W_1$ induces an isomorphism on $\pi_0$ and $\pi_1$ to get rid of 0-handles and 1-handles. 
All the (finitely many) 2-handles in this new handlebody are attached to $Y$ via contractible maps (otherwise they would kill elements in $\pi_1$). 
Hence the 2-skeleton $(W_1,Y)^{(2)}$ arising from this new handlebody is homotopy equivalent to $Y\vee (\bigvee_{j\in J}S^2)$. 
The 2-spheres in this wedge product finitely generate $\ker(\pi_2(r_1))$ as a $\Z[\pi_1(Y)]$-module over the common fundamental group $\pi_1(Y)=\pi_1(W_1)$.

Since $2< n/2$ we can represent each of those generators by an embedding $f:S^2\hookrightarrow \mathring{W_1}$. Since $r\circ f (S^2)$ is contractible, there is a map $g:D^3\rightarrow Y$ such that the following diagram commutes:
$$\begin{tikzcd}
S^2\arrow[d,"f"]\arrow[r,"i"]
&D^3 \arrow[d,"g"]\\
W_1\arrow[r,"r"]&Y.
\end{tikzcd}$$
The stable trivialization of $r^*\nu(Y)\oplus TW_1$ induces a stable trivialization of $f^*r^*\nu(Y)\oplus f^*TW_1=i^*g^*\nu(Y)\oplus f^*TW_1$. 
But $i^*g^*\nu(Y)$ is trivial since $D^3$ is contractible and hence $f^*TW_1\cong\nu(S^2,W_1)\oplus TS^2$ is stably trivial. 
Since $TS^2$ is stably trivial it follows that $\nu(S^2,W_1)$ is stably trivial and since $2<(n-1)/2$ we conclude that $\nu(S^2,W_1)$ is trivial.

Hence we can kill $\ker(\pi_2(r_1))$ in finitely many surgery steps. 
We end up with a cobordism $W_2:Y\rightsquigarrow \Sigma$ and a retract map $r_2:W_2\rightarrow Y$ which is 3-connected. 
Consequently the inclusion $\iota:Y\hookrightarrow W_2$ is 2-connected.
\end{proof}
If we start with a handle decomposition of $W_2$ with respect to $Y$ we can use handle cancellation  \cite{Wa71} to get rid of all the $0$-, $1$- or $2$-handles since the inclusion $\iota:Y\hookrightarrow W_2$ is 2-connected.
Turning this upside down this handle decomposition can be interpreted as a handle decomposition of $W_2$ with respect to $\Sigma_0$. 
In this interpretation the dimension of each handle becomes its codimension.

Consequently $W_2$ can be obtained from $\Sigma_0\times[-1,1]$ by attaching handles of codimension $\geq 3$ and $Y$ can be obtained from $\Sigma_0$ by a finite sequence of surgeries in codimension $\geq3$.
Thus, by \cite[Theorem A]{GroLa}, $Y$ admits a metric of positive scalar curvature if $\Sigma_0$ does.
\end{proof}

We have all the ingredients to prove the main results of Section \ref{section:examples}. 
Proposition \ref{prop:cobordismalt} follows directly from Lemma \ref{lem:sep} and Proposition \ref{prop:cobordism}. Further Proposition \ref{prop:NPSC} follows directly from Lemma \ref{lem:sep} and Definition \ref{def:NPSC}.
For the convenience of the reader we also include a proof of Proposition \ref{prop:rosenberg}, which heavily draws on the work of Zeidler \cites{RZ19,RZ20}.

\begin{proof}[Proof of Proposition \ref{prop:rosenberg}]
By Lemma \ref{lem:sep} there is one connected component $\Sigma_0$ of $\Sigma$, which separates $\p_-X$ and $\p_+X$. 
We can assume that $\Sigma_0\subset\mathring{X}$ (otherwise we isotope $\Sigma_0$ by flowing along the interior normal vector field to $\p X$ for a short time).
We consider the real Mi\v{s}\v{c}enko bundle $\mathcal{L}_Y\rightarrow Y$, which is the flat bundle of finitely generated projective Hilbert-$C^*\pi_1Y$-modules associated to the representation of $\pi_1Y$ on $C^*\pi_1Y$ by left multiplication. 
Recall (see for example \cite{RZ20}*{Section 2}) that the Rosenberg index $\alpha(Y)\in KO_{n-1}(C^*\pi_1Y)$ is then the (K-theoretic) index of the Dirac operator on the spinor bundle of $Y$ twisted with $\mathcal{L}_Y$. 
We pull back $\mathcal{L}_Y$ to $X$ via the projection $X\rightarrow Y$ and restrict this pullback bundle to the connected component $W$ of $X$ which is bounded by $\p_-X$ and $\Sigma_0$. 
We denote the resulting bundle by $\mathcal{E}\rightarrow W$.

Since $Y$ is spin, so are $W$ and $\Sigma_0$. If we restrict $\mathcal{E}$ to $\Sigma_0$, the index of the Dirac operator on the spinor bundle of $\Sigma_0$ twisted with the restriction of $\mathcal{E}$ is an element in $KO_{n-1}(C^*\pi_1Y)$ which we denote by $\alpha_\mathcal{E}(\Sigma_0)$. 
By bordism invariance of the index $\alpha_\mathcal{E}(\Sigma_0)=\alpha(Y)\neq 0$ and hence $\Sigma_0$ does not admit a metric with positive scalar curvature as $\mathcal{E}$ is a flat bundle and by the usual argument involving the Lichnerowicz-Weitzenb\"ock formula.
\end{proof}

\section{A codimension two obstruction}\label{section:last}

In this section we present a detailed proof of Theorem \ref{prop:deg}.
To do so we implement the ideas of \cite{Gro20}*{Section 7, Main Theorem} using the techniques developed in \cite{ChLi20}.

\begin{Bemerkung}
To unburden the notation in this section we will denote the scalar curvature of a Riemannian manifold $(M,g)$ by $R_M$. If $B\subset M$ is an embedded submanifold we will denote the scalar curvature of the induced metric $g\bigr|_B$ by $R_B$.
\end{Bemerkung}

\begin{Definition}\label{def:triple}
Let $M^n$ be a band. Let $\alpha\neq0\in H_{n-2}(M;\Z)$ be a non torsion homology class. We say that $\alpha$ is a \emph{band class} if there are $\alpha^+\in H_{n-2}(\p_+M;\Z)$ and $\alpha^-\in H_{n-2}(\p_-M;\Z)$ with $\alpha=\iota_*(\alpha^\pm)$, where $\iota:\p M\rightarrow M$ denotes the inclusion of the boundary.
\end{Definition}

The main analytical tool we need to develop is the following proposition, which is reminiscent of what Gromov, in \cite{Gro20}*{Section 3}, calls \emph{Richard's Lemma} in reference to \cite{Ri20}. The proof, however, follows in the line of  \cite{ChLi20}*{Sections 6.1 \& 6.2}.

\begin{Proposition}\label{prop:ana}
Let $(M^4,g)$ be a Riemannian band and $\alpha\in H_{2}(M;\Z)$ be a band class. If $R_M>\sigma>0$ and $\wid(M,g)>\frac{2\pi}{\sqrt{\sigma}}$, there is a smooth oriented submanifold $\Sigma$ which represents $\alpha$ and each connected component $\Sigma_0$ of $\Sigma$ is homeomorphic to a 2-sphere with $$\diam(\Sigma_0,g|_{\Sigma_0})\leq\pi\sqrt{\frac{2}{\inf R_M-\sigma}}.$$
\end{Proposition}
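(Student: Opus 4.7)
The plan is to perform two iterated $\mu$-bubble reductions, following the strategy of Chodosh--Li \cite{ChLi20}: first descend from $M^4$ to a 3-dimensional $\mu$-bubble $N^3 \subset \mathring{M}$, then descend from (a $\Z$-cover of) $N^3$ to a 2-dimensional warped $\mu$-bubble $\Sigma$ representing $\alpha$, and finally read off the spherical topology and diameter bound from the stability inequalities.

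\textbf{First bubble and stability on $N^3$.} Since $\wid(M,g) > 2\pi/\sqrt\sigma > \pi\sqrt{3/\sigma}$, Lemma~\ref{lem:map} yields a band map $\rho: M \to [-\tfrac{\pi}{2}\sqrt{3/\sigma}, \tfrac{\pi}{2}\sqrt{3/\sigma}]$ with $\Lip(\rho) < 1$. Define $h(x) := -\sqrt{3\sigma/4}\tan(\sqrt{\sigma/3}\,\rho(x))$; the ODE identity $\tfrac{4}{3}h^2 + 2h'(\rho) \equiv -\sigma$ holds, and $h \to \mp\infty$ at $\p_\pm M$. Lemma~\ref{lem:exmean} (applied to a smooth truncation of $h$) produces a smooth $\mu$-bubble $\Omega \subset M$ whose boundary $N^3 := \p\Omega \cap \mathring{M}$ separates $\p_\pm M$. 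Combining the stability inequality \eqref{eq:key} with $|\nabla\rho|\leq 1$ and $h'(\rho) < 0$ gives $\tfrac{4}{3}h^2 + 2\langle \nabla h, \nu\rangle \geq -\sigma$ pointwise on $N$, hence
\begin{equation*}
\int_N |\nabla\psi|^2 + \tfrac{1}{2} R_N \psi^2 \,d\mathcal{H}^3 \geq \int_N \tfrac{1}{2}(R_M - \sigma) \psi^2 \,d\mathcal{H}^3, \quad \forall \psi \in C^\infty(N).
\end{equation*}
In particular, the operator $-\Delta_N + \tfrac{1}{2}R_N - \tfrac{1}{2}(R_M - \sigma)$ is non-negative, with a positive first eigenfunction $w_1 \in C^\infty(N)$.

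\textbf{Topology, cover, and second bubble.} Since $\alpha$ is a band class, Mayer--Vietoris applied to $M = W^- \cup_N W^+$ shows that $\alpha$ maps to $0$ under the boundary map $H_2(M;\Z) \to H_1(N;\Z)$, so $\alpha = \iota_{N,*}(\alpha_N)$ for some lift $\alpha_N \in H_2(N;\Z)$; if $k\alpha_N = 0$ then $k\alpha = 0$, so $\alpha_N$ inherits non-torsion-ness from $\alpha$. By Poincaré duality on the closed oriented 3-manifold $N$, $\alpha_N$ is dual to a non-torsion class in $H^1(N;\Z) = [N, S^1]$, represented by a smooth $f: N \to S^1$. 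Lift to the $\Z$-cover $\pi: \tilde N \to N$ corresponding to $\ker f_*$; the map $f$ lifts to a proper function $\tilde f: \tilde N \to \R$. In the slab $\tilde N_L := \tilde f^{-1}([-L,L])$ with $L$ slightly larger than $\tfrac{\pi}{2}\sqrt{2/(R_M - \sigma)}$, apply the warped $\mu$-bubble machinery (Lemmas~\ref{lem:ex}--\ref{lem:var2}) with weight $u := \pi^* w_1$ and drift $\tilde h := -\sqrt{(R_M - \sigma)/2}\tan(\sqrt{(R_M - \sigma)/2}\,\tilde f)$. This produces a smooth 2-bubble $\tilde\Sigma \subset \tilde N_L$ whose projection $\Sigma := \pi(\tilde\Sigma) \subset N \subset M$ is a closed oriented embedded surface representing $\alpha_N$, hence $\alpha$. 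The warped second variation (Lemma~\ref{lem:var2}), after the Schoen--Yau rearrangement $\phi \mapsto \phi w_1^{-1/2}$ which uses the eigenfunction identity for $w_1$ to cancel the bad terms, combined with the ODE identity for $\tilde h$, yields
\begin{equation*}
\int_\Sigma |\nabla\phi|^2 + \tfrac{1}{2} R_\Sigma \phi^2 \,d\mathcal{H}^2 \geq \int_\Sigma \tfrac{1}{2}(R_M - \sigma) \phi^2 \,d\mathcal{H}^2, \quad \forall \phi \in C^\infty(\Sigma).
\end{equation*}

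\textbf{Topology and diameter of each component.} For a connected component $\Sigma_0$ of $\Sigma$, testing the above with $\phi \equiv 1$ and invoking Gauss--Bonnet gives $4\pi\chi(\Sigma_0) \geq \int_{\Sigma_0}(R_M - \sigma) > 0$, forcing $\Sigma_0 \cong S^2$. For the diameter bound, let $v_1 > 0$ be the first eigenfunction of $-\Delta_{\Sigma_0} + \tfrac{1}{2}R_{\Sigma_0}$, whose first eigenvalue is at least $\tfrac{1}{2}\inf_{\Sigma_0}(R_M - \sigma)$; a Bakry--Emery trick (use $-2\log v_1$ as a weight to produce a synthetic Ricci lower bound $\tfrac{1}{2}(R_M - \sigma)$ for the weighted Laplacian on $\Sigma_0$, aided by the fact that $\tilde\Sigma$ lives in a slab of $\tilde f$-width exactly $\pi\sqrt{2/(R_M - \sigma)}$) followed by the weighted Bonnet--Myers theorem delivers $\diam(\Sigma_0, g|_{\Sigma_0}) \leq \pi\sqrt{2/(R_M - \sigma)}$. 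The most delicate step is this last one: the integrated stability gives only an eigenvalue bound rather than a pointwise Gauss curvature bound, so extracting a sharp diameter estimate requires careful use of the weighted Bakry--Emery--Ricci formalism together with the specific choice of tangent drift $\tilde h$ that makes the slab have exactly the target width.
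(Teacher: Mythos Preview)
Your two-step outline is in the right spirit, but it diverges from the paper at every stage and two of the divergences create genuine gaps.

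\textbf{First reduction.} The paper does \emph{not} take a separating $\mu$-bubble $N^3$ in $M$. Because $\alpha$ is a band class there is a relative class $\beta\in H_3(M,\partial M;\Z)$ with $\partial\beta=\alpha^+-\alpha^-$, and the paper represents $\beta$ by a \emph{free boundary minimal} hypersurface $B^3$. This $B$ is itself a band with $\partial_\pm B\subset\partial_\pm M$ representing $\alpha^\pm$, it inherits $\wid(B)\geq\wid(M)>2\pi/\sqrt\sigma$, and classical stability of $B$ produces $u>0$ with $\Delta_B u\leq-\tfrac12(R_M-R_B+|A|^2)u$. Your Mayer--Vietoris lift of $\alpha$ to $H_2(N)$ happens to be correct, but it is unnecessary and sets up the next problem.

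\textbf{Second reduction and embeddedness.} The paper runs a single warped $\mu$-bubble \emph{inside the band $B$} (weight $u$, drift $h=-(\pi/\ell)\tan((\pi/2\ell)\phi)$ with $\ell=\pi/\sqrt\sigma$), so the resulting $\Sigma\subset B\subset M$ is automatically an embedded submanifold of $M$ representing $\alpha$. Your route---pass to a $\Z$-cover $\tilde N\to N$, bubble in a slab, then project---does not yield an embedded surface: the slab $\tilde f^{-1}([-L,L])$ generically covers several fundamental domains, so $\pi(\tilde\Sigma)$ is only immersed. The proposition asks for a smooth submanifold. Moreover, your drift $\tilde h$ is calibrated to absorb a further $R_M-\sigma$ of curvature, but you already spent $\sigma$ in the first bubble; with your choices the stability on $\Sigma$ would only give $-\Delta_\Sigma+\tfrac12 R_\Sigma\geq 0$, not the displayed inequality with $\tfrac12(R_M-\sigma)$ on the right.

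\textbf{Diameter bound.} The paper does not use any Bakry--\'Emery or weighted Bonnet--Myers argument. From the warped stability it extracts, for each component $\Sigma_0$, a positive $\lambda\in C^\infty(\Sigma_0)$ with
\[
\Delta_{\Sigma_0}\lambda\;\leq\;-\tfrac12\bigl(R_M-\sigma-R_{\Sigma_0}\bigr)\lambda+\tfrac12\lambda^{-1}|\nabla_{\Sigma_0}\lambda|^2,
\]
and then applies Lemma~\ref{lem:surface} (\cite{ChLi20}*{Lemma 16}), which is a further $2$-dimensional warped $\mu$-bubble argument: the existence of such a $\lambda$ with constant $C=R_M-\sigma$ forces $\diam(\Sigma_0)\leq\pi\sqrt{2/C}$. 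Your sketch ``eigenvalue bound $\Rightarrow$ synthetic Ricci lower bound $\Rightarrow$ weighted Myers'' does not produce the sharp constant from an integrated stability inequality, and the remark that $\tilde\Sigma$ sits in a slab of $\tilde f$-width $\pi\sqrt{2/(R_M-\sigma)}$ says nothing about the \emph{intrinsic} diameter of $\Sigma_0$.
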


\begin{proof}
Denote $\ell=\frac{\pi}{\sqrt{\sigma}}$. Let $\beta\in H_{3}(M,\p M;\Z)$ be the relative class with $\p\beta=\alpha^+-\alpha^-$. Let $B^{3}$ be a free boundary minimal hypersurface in the class $\beta$.

By stability of $B$ and the classical second variation formula for the area functional we see that
\begin{displaymath}
\int_B|\nabla_B\psi|^2-\frac{1}{2}(R_M-R_B+|A|^2)\psi^2d\Ha^{3}\geq0,
\end{displaymath}
for all $\psi\in C^1_0(B)$. As such, there is a function $u\in C^\infty_0(B$) with $u>0$ on $\mathring{B}$ and
\begin{equation}\label{eq:1ev}
\Delta_Bu\leq-\frac{1}{2}(R_M-R_B+|A|^2)u.
\end{equation}
Furthermore $(B,g|_B)$ is a Riemannian band with $\wid(B,g|_B)>2\ell$. We can shrink $B$ a little bit from both sides such that $\wid>2\ell$ remains true but $\p B\subset \mathring{M}$ (this guarantees $u>0$ on $B$). Let $\phi:B\rightarrow[-\ell,\ell]$ be the map produced by Lemma \ref{lem:map} and set $h(x)=-\frac{\pi}{\ell}\tan(\frac{\pi}{2\ell}\phi(x))$.

We define $\Omega_0=\phi^{-1}[-\ell,0]$ and consider the functional 
\begin{displaymath}
\mathcal{A}(\hat{\Omega})=\int_{\p^*\hat{\Omega}}ud\Ha^{2}-\int_B(\chi_{\hat{\Omega}}-\chi_{\Omega_0})hud\Ha^3
\end{displaymath}
for all Caccioppoli sets $\hat{\Omega}$ with $\hat{\Omega}\Delta\Omega_0$ contained in the interior of $B$.
By Lemma \ref{lem:ex} we find a $\mu$-bubble $\Omega\subset B$ with smooth boundary $\Sigma=(\p\Omega\backslash\p_-B)$, which represents the class $\alpha\in H_{2}(M;\Z)$.
Stability of $\Omega$ and Lemma \ref{lem:var2} imply that for each connected component $\Sigma_0$ of $\Sigma$ we have:
\begin{displaymath}
\int_{\Sigma_0}|\nabla_{\Sigma_0}\psi|^2u-\frac{1}{2}\left(R_B-R_{\Sigma_0}-H^2+|A|^2\right)\psi^2u+\left(2H\langle\nabla_Bu,\nu\rangle+\frac{d^2u}{d\nu^2}-Hhu-\langle\nabla_B(hu),\nu\rangle\right)\psi^2\\
\geq0
\end{displaymath}
We use:
\begin{displaymath}
\frac{d^2u}{d\nu^2}+H\langle\nabla_Bu,\nu\rangle=\Delta_Bu-\Delta_{\Sigma_0}u\leq-\frac{1}{2}(R_M-R_B+|A|^2)u-\Delta_{\Sigma_0}u.
\end{displaymath}
and as a consequence of Lemma \ref{lem:var1}
\begin{displaymath}
Hhu=H^2u+H\langle\nabla_Bu,\nu\rangle,
\end{displaymath}
and 
\begin{displaymath}
\frac{1}{2}H^2\psi^2u=\frac{1}{2}u^{-1}\langle\nabla_Bu,\nu\rangle^2\psi^2-h\langle\nabla_Bu,\nu\rangle+\frac{1}{2}h^2\psi^2u.
\end{displaymath}
Hence
\begin{displaymath}
\begin{split}
0&\leq \int_{\Sigma_0}|\nabla_{\Sigma_0}\psi|^2u-\frac{1}{2}(R_M-R_{\Sigma_0}+H^2+2|A|^2)\psi^2u-(\Delta_{\Sigma_0}u+h\langle\nabla_B(u),\nu\rangle+u\langle\nabla_Bh,\nu\rangle)\psi^2\\
&=\int_{\Sigma_0}|\nabla_{\Sigma_0}\psi|^2u-\frac{1}{2}(R_M-R_{\Sigma_0}+2|A|^2)\psi^2u-(\Delta_{\Sigma_0}u)\psi^2-\frac{1}{2}(h^2+2\langle\nabla_Bh,\nu\rangle)\psi^2u\\
&\leq\int_{\Sigma_0}|\nabla_{\Sigma_0}\psi|^2u-\frac{1}{2}(R_M-\sigma-R_{\Sigma_0})\psi^2u-(\Delta_{\Sigma_0}u)\psi^2-\frac{1}{2}(\sigma+h^2+2\langle\nabla_Bh,\nu\rangle)\psi^2u
\end{split}
\end{displaymath}
and since, using $\ell=\frac{\pi}{\sqrt{\sigma}}$, we can estimate
\begin{displaymath}
\sigma+h^2+2\langle\nabla_Bh,\nu\rangle\geq\sigma+h^2-2|\nabla_Bh|=0.
\end{displaymath}
We conclude
\begin{equation}\label{eq:star}
0<\int_{\Sigma_0}|\nabla_{\Sigma_0}\psi|^2u-\frac{1}{2}(R_M-\sigma-R_{\Sigma_0})\psi^2u-(\Delta_{\Sigma_0}u)\psi^2.
\end{equation}
If we choose $\psi=u^{-\frac{1}{2}}$, this yields
\begin{displaymath}
\begin{split}
0&<\int_{\Sigma_0}|\nabla_{\Sigma_0}u^{-\frac{1}{2}}|u-\frac{1}{2}(R_M-\sigma-R_{\Sigma_0})-(\Delta_{\Sigma_0}u)u^{-1}\\
&=\int_{\Sigma_0}-\frac{3}{4}u^{-2}|\nabla_{\Sigma_0}u|^2-\frac{1}{2}(R_M-\sigma-R_{\Sigma_0})\\
&\leq\int_{\Sigma_0}-\frac{1}{2}(R_M-\sigma-R_{\Sigma_0}),
\end{split}
\end{displaymath}
or equivalently 
\begin{displaymath}
\frac{1}{2}\int_{\Sigma_0}R_M-\sigma\leq\frac{1}{2}\int_{\Sigma_0}R_{\Sigma_0}=2\pi\chi(\Sigma_0),
\end{displaymath}
which implies that $\Sigma_0$ is a sphere with $\area(\Sigma_0) \leq\frac{2\pi}{\inf R_M-\sigma}$.

To finish the proof we return to \eqref{eq:star} and see that it implies the existence of a positive function $w\in C^\infty(\Sigma_0)$, with
\begin{displaymath}
\dive_{\Sigma_0}(u\nabla_{\Sigma_0}w)\leq-\frac{1}{2}(R_M-\sigma-R_{\Sigma_0})wu-(\Delta_{\Sigma_0}u)w.
\end{displaymath}
If we set $\lambda=uw$, then 
\begin{displaymath}
\begin{split}
\Delta_{\Sigma_0}\lambda&=\dive_{\Sigma_0}(u\nabla_{\Sigma_0}w)+\dive_{\Sigma_0}(w\nabla_{\Sigma_0}u)\\
&\leq-\frac{1}{2}(R_M-\sigma-R_{\Sigma_0})\lambda-(\Delta_{\Sigma_0}u)w+\dive_{\Sigma_0}(w\nabla_{\Sigma_0}u)\\
&\leq-\frac{1}{2}(R_M-\sigma-R_{\Sigma_0})\lambda+\langle\nabla_{\Sigma_0}u,\nabla_{\Sigma_0}w\rangle\\
&\leq-\frac{1}{2}(R_M-\sigma-R_{\Sigma_0})\lambda+\frac{1}{2}\lambda^{-1}|\nabla_{\Sigma_0}\lambda|^2.
\end{split}
\end{displaymath}
Now $\diam(\Sigma_0,g|_{\Sigma_0})\leq\pi\sqrt{\frac{2}{\inf R_M-\sigma}}$ follows directly from the next lemma.
\end{proof}

\begin{Lemma}[\cite{ChLi20}*{Lemma 16}]\label{lem:surface}
Let $(N^2,g)$ be a closed 2-dimensional Riemannian manifold. If there is a smooth function $\lambda>0$ on $\Sigma_0$ with 
$$
\Delta_{N}\lambda\leq-\frac{1}{2}(C-R_{N})\lambda+\frac{1}{2}\lambda^{-1}|\nabla_{N}\lambda|^2
$$
for some $C>0$, then $\diam(N,g)\leq\sqrt{\frac{2}{C}}\pi$.
\end{Lemma}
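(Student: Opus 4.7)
The plan is to first simplify the hypothesis by the substitution $u := \sqrt{\lambda} > 0$. A routine chain-rule calculation, identical in spirit to those appearing in the proof of Proposition \ref{prop:ana}, converts the given differential inequality for $\lambda$ into
$$\Delta_N u \;\leq\; \frac{R_N - C}{4}\, u. \qquad (\star)$$
This is precisely the kind of scalar elliptic inequality needed as input to a $\mu$-bubble argument on $N$ itself, and it plays the same role here that the stability inequality for the free-boundary minimal hypersurface $B$ plays in the proof of Proposition \ref{prop:ana}.

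Next, I would argue by contradiction: assume $\diam(N, g) > \pi\sqrt{2/C}$, fix two points $p, q \in N$ realizing this, and excise small disjoint geodesic disks $B_p, B_q$ around them to form a $2$-dimensional Riemannian band $M = N \setminus (B_p \cup B_q)$ whose width still strictly exceeds $\pi\sqrt{2/C}$. Using Lemma \ref{lem:map} I would pick a smooth $1$-Lipschitz band map $\phi \colon M \to [-\ell,\ell]$ and a profile $H \colon (-\ell,\ell) \to \R$ solving an ODE of the form $H' + \tfrac{C}{4} + \tfrac{1}{2}H^2 = 0$ with $H \to \mp\infty$ at $\pm\ell$. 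Setting $h := H \circ \phi$ and using $u$ as the weight in the warped $\mu$-bubble functional $\mathcal{A}^{u}_h$, Lemma \ref{lem:ex} yields a smooth minimizer $\Omega$ whose boundary $\Sigma \subset \mathring{M}$ is a disjoint union of circles separating $p$ from $q$.

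The heart of the proof is to apply the stability inequality Lemma \ref{lem:var2}, which in dimension $n = 2$ simplifies considerably since $R_\Sigma = 0$ and $|A|^2 = H^2$. Following the template of the proof of Proposition \ref{prop:ana}, I would test stability with $\psi = u^{-1/2}$, replace the normal Hessian of $u$ by its upper bound from $(\star)$, and integrate the purely tangential term $u_{tt}$ by parts along the closed curve $\Sigma$ to produce a manifestly non-positive error $-\tfrac{3}{4}\int_\Sigma u^{-2}(u_t)^2\, ds$. The ODE for $H$ is tuned precisely so that $\tfrac{C}{4} + \tfrac{1}{2}h^2 + h_\nu \geq 0$ along $\Sigma$ (using $|\nabla\phi|\leq 1$); cancelling these $h$-dependent terms out of the stability inequality should leave a coercive statement that is incompatible with the existence of $\Sigma$.

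The step I expect to be the main obstacle is making that final inequality genuinely coercive. Unlike in Proposition \ref{prop:ana}, where the positive term $|A|^2 \geq H^2/(n-1)$ could absorb the negative curvature contributions, no analogous positive quantity is available in codimension one on a surface, so an uncontrolled $R_N/4$ term survives in the stability integrand. Closing the argument will therefore require a more delicate choice of test function or profile, possibly combined with Gauss--Bonnet applied to the region bounded by a component of $\Sigma$; in particular, the naive ODE for $H$ tuned to the coefficient $\tfrac{1}{4}$ in $(\star)$ produces a blow-up length that is off by a factor of two from the sharp constant $\pi\sqrt{2/C}$, so the argument must exploit the $2$-dimensionality of $N$ in a way that has no direct analog in Proposition \ref{prop:ana}.
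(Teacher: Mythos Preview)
Your overall strategy---assume the diameter is too large, excise two small balls to get a band, and run a warped $\mu$-bubble argument---matches the paper exactly. The genuine gap is your first step: the substitution $u=\sqrt{\lambda}$. You correctly derive $(\star)$, but in passing to this linear inequality you throw away precisely the structure that makes the argument close. As you yourself diagnose, using $u$ as the $\mu$-bubble weight leaves an uncontrolled $-\tfrac14 R_N$ in the stability integrand and forces an ODE whose blow-up length is off by a factor of two; no amount of Gauss--Bonnet or clever test functions repairs this, because the loss is real.

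The paper's resolution is to \emph{not} substitute: use $\lambda$ itself as the weight in $\mathcal{A}^\lambda_h$. Then, when you replace $\tfrac{d^2\lambda}{d\nu^2}+H\langle\nabla\lambda,\nu\rangle$ by $\Delta_M\lambda-\Delta_\Sigma\lambda$ and invoke the \emph{original} hypothesis, the coefficient of $R_N$ coming from the hypothesis is $+\tfrac12$, which cancels the $-\tfrac12 R_N$ from the second variation formula exactly. Moreover the quadratic term $\tfrac12\lambda^{-1}|\nabla_M\lambda|^2$ from the hypothesis survives in the stability inequality; its normal part cancels against the $\tfrac12 H^2$ rewriting, and its tangential part is what makes the final integral (tested with $\psi=\lambda^{-1/2}$) collapse to $\int_\Sigma -\tfrac14\lambda^{-2}|\nabla_\Sigma\lambda|^2\le 0$, the contradiction. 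With the $R_N$ term gone, the profile $h$ is tuned to $C+h^2+2\langle\nabla h,\nu\rangle>0$, which blows up at width exactly $\pi\sqrt{2/C}$---the sharp constant. So the ``more delicate choice'' you were looking for is simply: don't linearize the hypothesis.
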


\begin{proof}
Let $\ell=\sqrt{\frac{2}{C}}\pi$. Assume for a contradiction, that there are $p,q\in N$ with $\dist_g(p,q)>\ell$. For $\e>0$ small enough $M=N\backslash (B_\e(p)\cup B_\e(q))$ is a band with $\wid(M,g)>\ell$. Let $\phi:M\rightarrow[-\frac{\ell}{2},\frac{\ell}{2}]$ be the map we get from Lemma \ref{lem:map} and define $h(x)=-\frac{2\pi}{\ell}\tan(\frac{\pi}{\ell}\phi(x))$.\\
Let $\Omega_0=\phi^{-1}[-\frac{\ell}{2},0]$ (w.l.o.g 0 is a regular value of $\phi$) and consider the functional 
\begin{displaymath}
\mathcal{A}(\hat{\Omega})=\int_{\p^*\hat{\Omega}}\lambda-\int_M(\chi_{\hat{\Omega}}-\chi_{\Omega_0})h\lambda d\Ha^2
\end{displaymath}
for all Caccioppoli sets $\hat{\Omega}$ with $\hat{\Omega}\Delta\Omega_0$ contained in the interior of $M$.

By Lemma \ref{lem:ex} there is a minimizer $\Omega$ with smooth boundary $(\p\Omega\backslash\p_-M)$ and by Lemma \ref{lem:var1} every connected component $\Sigma$ satisfies
\begin{displaymath}
H=-\lambda^{-1}\langle\nabla_M\lambda,\nu\rangle+ h.
\end{displaymath}
Note that in this case $H$ is the geodesic curvature.\\
By stability and Lemma \ref{lem:var2} we see
\begin{displaymath}
0\leq\int_\Sigma|\nabla_\Sigma\psi|^2\lambda-\frac{1}{2}(R_M-H^2+|A|^2)\psi^2\lambda+(2H\langle\nabla_M\lambda,\nu\rangle+\frac{d^2\lambda}{d\nu^2}-Hh\lambda-\langle\nabla_M(h\lambda),\nu\rangle)\psi^2
\end{displaymath}
for all $\psi\in C^1(\Sigma)$. We use:
\begin{displaymath}
\frac{d^2\lambda}{d\nu^2}+H\langle\nabla_Mu,\nu\rangle=\Delta_M\lambda-\Delta_{\Sigma}\lambda\leq-\Delta_{\Sigma}\lambda-\frac{1}{2}(C-R_M)\lambda+\frac{1}{2}\lambda^{-1}|\nabla_M\lambda|^2,
\end{displaymath}
as a consequence of Lemma \ref{lem:var1}
\begin{displaymath}
Hh\lambda=H^2\lambda+H\langle\nabla_M\lambda,\nu\rangle,
\end{displaymath}
and 
\begin{displaymath}
\frac{1}{2}H^2\psi^2\lambda=\frac{1}{2}\lambda^{-1}\langle\nabla_M\lambda,\nu\rangle^2\psi^2-h\langle\nabla_M\lambda,\nu\rangle+\frac{1}{2}h^2\psi^2\lambda.
\end{displaymath}
It follows that
\begin{displaymath}
0\leq\int_\Sigma|\nabla_\Sigma\psi|^2\lambda+\frac{1}{2}\lambda^{-1}|\nabla_M\lambda|^2\psi^2-\frac{1}{2}\lambda^{-1}\langle\nabla_M\lambda,\nu\rangle^2\psi^2-(\Delta_{\Sigma}\lambda)\psi^2-\frac{1}{2}(C+h^2+2\langle\nabla_Mh,\nu\rangle)\psi^2\lambda
\end{displaymath}
and since $C+h^2+2\langle\nabla_Mh,\nu\rangle>0$ (remember that $\Lip(\phi)<1$) by construction of $h$, we see
\begin{displaymath}
0<\int_\Sigma|\nabla_\Sigma\psi|^2\lambda+\frac{1}{2}\lambda^{-1}|\nabla_M\lambda|^2\psi^2-\frac{1}{2}\lambda^{-1}\langle\nabla_M\lambda,\nu\rangle^2\psi^2-(\Delta_{\Sigma}\lambda)\psi^2\\
\end{displaymath}
If we choose $\psi=\lambda^{-\frac{1}{2}}$ this yields 
\begin{displaymath}
\begin{split}
0&<\int_\Sigma -\frac{3}{4}\lambda^{-2}|\nabla_\Sigma\lambda|^2+\frac{1}{2}\lambda^{-2}|\nabla_M\lambda|^2-\frac{1}{2}\lambda^{-2}\langle\nabla_M\lambda,\nu\rangle^2\\
&=\int_\Sigma -\frac{1}{4}\lambda^{-2}|\nabla_\Sigma\lambda|^2,
\end{split}
\end{displaymath}
which is a contradiction.
\end{proof}

\begin{Definition}\label{def:fill}
Let $(X^n,g)$ be a complete oriented Riemannian manifold and $c$ a locally finite singular $k$-cycle. Then the the \emph{filling radius} of $c$ in $(X,g)$ is defined to be
\begin{displaymath}
\fil_\Z(c,X)=\inf\{r>0 | [c]=0\in H_k^{lf}(U_r(c);\Z)\},
\end{displaymath}
where $U_r(c)$ denotes the open $r$-neighborhood of $c$ in $(X,g)$. If one replaces $\Z$ by $\Q$ the same definition yields the rational filling radius.
\end{Definition}

\begin{Bemerkung}\label{rem:fill}
In \cite{Gro83}*{Section 1} Gromov generalizes the above and defines the filling radius $\fil_\Z(X,g)$ of a complete oriented Riemannian manifold $(X,g)$, with respect to the Kuratowski embedding of $(X,d_g)$. Furthermore he proves two results we will need in the following:
\begin{enumerate}
\item $\fil_\Z(\R,g_{std})=\infty$ (see \cite{Gro83}*{Section 4.4.C})
\item if $X$ is isometrically embedded in a Metric space $(S,d)$, then $\fil_\Z(X,S)\geq \fil_\Z(X,g)$ (see \cite{Gro83}*{Section 1}).
\end{enumerate}
Both points remain true with rational coefficients.
\end{Bemerkung}

\begin{Lemma}[\cite{Iv86}*{Theorem IX.4.7}]\label{lem:alex}
Let $X^n$ be an oriented manifold. Let $C\subset X$ be a closed subset such that $\p C=C\backslash\mathring{C}$ is a smooth submanifold. Then $H_1^{lf}(C;\Z)\cong H^{n-1}(X,X\backslash C;\Z)$.
\end{Lemma}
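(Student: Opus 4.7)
The plan is to combine a collar argument and excision with Poincar\'e-Lefschetz duality for $C$. Since $\p C$ is a smooth codimension-one submanifold of the ambient manifold $X$, it admits a two-sided collar
$$\p C\times(-\varepsilon,\varepsilon)\hookrightarrow X,$$
in which $\p C\times[0,\varepsilon)$ maps into $C$. Setting $N=\mathring{C}\cup(\p C\times(-\varepsilon,\varepsilon))$ produces an open neighbourhood of $C$ in $X$ which deformation retracts onto $C$.

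First I would apply excision to the pair $(X,X\backslash C)$. The closed set $X\backslash N$ is contained in the open set $X\backslash C$, so excision gives
$$H^{n-1}(X,X\backslash C;\Z)\cong H^{n-1}(N,N\backslash C;\Z).$$
The collar structure yields a strong deformation retraction of the pair $(N,N\backslash C)$ onto $(C,\p C)$, and hence
$$H^{n-1}(N,N\backslash C;\Z)\cong H^{n-1}(C,\p C;\Z).$$

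Next I would apply Poincar\'e-Lefschetz duality to $C$ viewed as an oriented $n$-manifold with boundary (with orientation inherited from $X$). Capping with the locally finite fundamental class $[C,\p C]\in H_n^{lf}(C,\p C;\Z)$ produces an isomorphism
$$H^{n-1}(C,\p C;\Z)\cong H_1^{lf}(C;\Z).$$
Composing the three isomorphisms proves the lemma.

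The main obstacle is the last step, since $C$ is allowed to be non-compact and we must pair ordinary cohomology with locally finite (Borel-Moore) homology. The cleanest way to handle this is the sheaf-theoretic formulation used by Iversen \cite{Iv86}: the isomorphism then falls out of Verdier duality applied to the orientation sheaf on $C$. A more hands-on alternative is to exhaust $C$ by compact submanifolds with corners, invoke the classical compact Lefschetz duality on each piece, and pass to the inverse limit; the required Mittag-Leffler condition is supplied by the collared structure of $\p C$.
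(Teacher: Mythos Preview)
The paper does not give its own proof of this lemma; it is stated with a direct citation to \cite{Iv86}*{Theorem~IX.4.7} and used as a black box in the proof of Lemma~\ref{lem:link}. Your outline is a correct and standard derivation of this Alexander-type duality: excise down to a collar neighbourhood, retract onto $(C,\partial C)$, and then apply Poincar\'e--Lefschetz duality for the (possibly non-compact) oriented manifold with boundary $C$. You also correctly identify that the only nontrivial point is the non-compact duality $H^{n-1}(C,\partial C;\Z)\cong H_1^{lf}(C;\Z)$, and that the cited result in Iversen supplies exactly this via the sheaf-theoretic/Verdier framework. One small remark: you silently assume $\partial C$ is a two-sided codimension-one submanifold, which is not literally stated in the lemma but is the case in the paper's application (where $C=\rho^{-1}[0,r_k]$ for a regular value $r_k$); it would be worth making this hypothesis explicit.
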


\begin{Lemma}[Codim 2 Linking Lemma \cite{Gro19}*{Lemma 4.G}]\label{lem:link}
Let $Y^n$ be a closed aspherical manifold and $g$ a Riemannian metric on $Y$. For every $\sigma>0$ there is a compact band $M$ in the universal cover $(\ti{Y},\ti{g})$ such that: $\wid(M,\ti{g})>\frac{2\pi}{\sqrt{\sigma}}$, there is a band class $\alpha\in H_{n-2}(M;\Z)$ and for every cycle $c\subset M$ representing a nonzero multiple of $\alpha$ we have $\fil_\Z(c,\ti{Y})>\frac{2\pi}{\sqrt{\sigma}}$.
\end{Lemma}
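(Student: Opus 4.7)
The plan is to build $M$ as a long tubular shell around a proper bi-infinite path in $\ti Y$ and to manufacture the band class $\alpha$ as the Poincar\'e--Lefschetz dual in $M$ of this path, using Remark~\ref{rem:fill} to control the filling radius from below.

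First I would set up the topology. Since $Y$ is closed aspherical of positive dimension, $\Gamma := \pi_1(Y)$ is infinite and torsion-free, and $\ti Y$ is a noncompact, contractible, complete Riemannian manifold on which $\Gamma$ acts freely, cocompactly and isometrically. Picking an infinite-order element $\tau \in \Gamma$, a basepoint $\ti p_0\in\ti Y$, and a geodesic segment from $\ti p_0$ to $\tau\cdot\ti p_0$, I would concatenate the $\tau^n$-translates of this segment to obtain a proper path $\gamma:\R\to\ti Y$ whose image is $\langle\tau\rangle$-invariant. Although $\gamma$ is only quasi-geodesic in general, by a standard limiting argument (or by working in a model where $\tau$ admits an axis) I may assume that long subsegments of $\gamma$ are isometrically embedded up to an error that later choices of scale absorb.

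Second, for $L := 2\pi/\sqrt\sigma$ and large $T\gg L$, I would take $M$ to be a smoothed $R$-neighborhood of $\gamma([-T,T])$ in $\ti Y$, with $R$ large compared to $L$, cut off by two smooth level sets of the functions $x\mapsto\dist(x,\gamma(T))$ and $x\mapsto\dist(x,\gamma(-T))$; these level sets provide the two boundary components $\p_\pm M$, and $\wid(M,\ti g)>L$ follows from the length of the tube and the $1$-Lipschitz character of $\dist$. The band class $\alpha\in H_{n-2}(M;\Z)$ is then defined via Poincar\'e--Lefschetz duality applied to the class $[\gamma\cap M]\in H_1(M,\p M;\Z)$; it is realised geometrically by a small $(n-2)$-sphere linking $\gamma$ inside $M$, and pushing such a sphere into $\p_\pm M$ exhibits classes $\alpha^\pm\in H_{n-2}(\p_\pm M;\Z)$ with $\iota_*(\alpha^\pm)=\alpha$, so $\alpha$ is indeed a band class in the sense of Definition~\ref{def:triple}.

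Third, for the filling radius bound, suppose a cycle $c\subset M$ representing $k\alpha$ with $k\neq 0$ bounded a chain $D\subset U_r(c)\subset\ti Y$ with $r<L$. The Lefschetz-duality interpretation of $\alpha$ forces the algebraic intersection number of a transverse perturbation of $D$ with $\gamma$ to equal $\pm k$, so $D$ meets the middle arc of $\gamma$; hence $U_r(c)$ contains a full tubular neighborhood of a length-$(2T-2r)$ isometrically embedded subarc of $\gamma$. Applying Lemma~\ref{lem:alex} to translate this linking into a statement about $H_1^{lf}$, together with Remark~\ref{rem:fill}(2) (the filling radius does not drop under isometric embedding) and Remark~\ref{rem:fill}(1) ($\fil_\Z(\R,g_{\mathrm{std}})=\infty$), one derives a contradiction by sending $T\to\infty$ while keeping $r<L$ fixed. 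The main obstacle will be making this linking argument quantitative: the concatenation of $\Gamma$-translates is only a discrete quasi-geodesic, so comparing its filling behavior with an honest isometric copy of $\R$ requires either additional convexity on $\ti Y$ or the iterated duality argument inside nested compact subsets of $\ti Y$ carried out in \cite{Gro19}*{Lemma~4.G}.
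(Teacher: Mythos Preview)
Your construction places the band $M$ as a tube around a segment of $\gamma$, and takes $\alpha$ to be the class of a small $(n-2)$-sphere $S$ linking $\gamma$. But then $S$ itself is a cycle representing $\alpha$, and $S$ bounds the small normal $(n-1)$-disk in $\ti Y$ (the fibre of the tubular neighbourhood of $\gamma$ through the centre of $S$). This disk meets $\gamma$---as it must, by your own linking observation---but it is contained in an arbitrarily small ball, so $\fil_\Z(S,\ti Y)$ is as small as you like. Sending $T\to\infty$ does not help: the normal disk does not grow with $T$. Thus the desired conclusion fails for your $M$ and $\alpha$; the linking argument only tells you that a filling of $c$ must reach $\gamma$, which is worthless when $c$ already sits next to $\gamma$.

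The paper's construction is essentially dual to yours. Rather than a tube around $\gamma$, one first uses Lemma~\ref{lem:alex} and the infinite filling radius of $\gamma$ to produce a closed $(n-2)$-submanifold $N_k$ linked with $\gamma$ and lying \emph{far} from $\gamma$, at distance roughly $r_k\to\infty$. The band $M_k$ is then a thick neighbourhood of $N_k$ (minus a small tube around $N_k$ itself), still disjoint from $\gamma$. Any cycle $c\subset M_k$ representing a nonzero multiple of $[N_k]$ is linked with $\gamma$ and sits at distance at least $r_{k-1}/2$ from it, so every filling of $c$ in $\ti Y$ must cross $\gamma$ and hence travel at least that far; this yields $\fil_\Z(c,\ti Y)\geq r_{k-1}/2$. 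The filling-radius lower bound comes precisely from the \emph{distance} between the band and the line it links, so the band must be placed far from $\gamma$, not around it. (Incidentally, the quasi-geodesic difficulty you flag is avoided entirely: since $\ti Y$ is complete, noncompact, and carries a cocompact isometric group action, it contains an honest geodesic line by \cite{ChLi20}*{Lemma 6}.)
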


\begin{proof}
Let $\sigma>0$ be arbitrary. By \cite{ChLi20}*{Lemma 6} there is a geodesic line $\gamma:\R\rightarrow(\ti{Y},\ti{g})$.
Since $\gamma$ is an isometric embedding of the real line, $\fil_\Z(\gamma,\ti{Y})\geq\fil_\Z(\R,g_{Std})=\infty$ (see Remark \ref{rem:fill}), hence  for all $r>0$ the line $\gamma$ represents a non-zero class in $H_1^{lf}(U_r(\gamma),\Z)$, where $U_r(\gamma)$ denotes the open $r$-neighborhood of $\gamma$ in $(\ti{Y},\ti{g})$. Since $\fil_\Q(\gamma,\ti{Y})\geq\fil_\Q(\R,g_{Std})=\infty$ we see by the  same argument, that $[\gamma]$ is non torsion in $H_1^{lf}(U_r(\gamma),\Z)$.

For some $\e>0$ let $\rho$ be a smooth approximation of $\dist(\gamma,\cdot)$ which is $\e$-close and such that $\Lip(\rho)<1+\e$. There is a sequence $(r_k)_{k\in\mathbb{N}}$ of regular values of $\rho$ with $r_k\rightarrow\infty$ and the property that for all $k\in \mathbb{N}$ we have $r_k>2\e$ and $r_{k+1}-r_k>2\e$. Denote $\ov{U}_k=\rho^{-1}[0,r_k]$. By construction $[\gamma]\neq0\in H_1^{lf}(\ov{U}_k,\Z)$ for all $k$ and the class is non torsion. Thus by Lemma \ref{lem:alex}, the fact that $\ti{Y}$ is contractible we see:
\begin{displaymath}
0\neq[\gamma]\in H_1^{lf}(\U_k,\Z)\cong H^{n-1}(\ti{Y},\ti{Y}\backslash \U_k;\Z)\cong H^{n-2}(\ti{Y}\backslash \U_k;\Z).
\end{displaymath}
Furthermore, since $[\gamma]$ is non torsion its image in $H^{n-2}(\ti{Y}\backslash \U_k;\Z)$ corresponds by the UCT to an element $\alpha_k\neq0\in H_{n-2}(\ti{Y}\backslash \U_k;\Z)$. If we represent $\alpha_k$ by a closed smooth submanifold $N_k\subset\ti{Y}\backslash \U_k$, then $N$ is linked with $\gamma$ and $\dist(\gamma,N_k)>r_k-\e>r_{k-1}+\e$.

Let $V_k$ be a smoothed version (as before) of the closed $(r_{k-1}/2)$-neighborhood of $N_k$. Then $0\neq\alpha_k\in H_{n-1}(V_k;\Z)$, since $N_k$ is linked with $\gamma$ and $V_{k}\subset(\ti{Y}\backslash\gamma)$. Furthermore any cycle $c\subset V_k$, which represents a nonzero multiple of $\alpha_k$ is linked with $\gamma$ (since $\alpha_k$ is non torsion) and hence $\fil_\Z(c,\ti{Y})\geq\dist(\gamma,c)\geq (r_{k-1}/2)$. We want to see that there is a class $\alpha_k^+\in H_{n-1}(\p V_{k};\Z)$ with $\iota(\alpha_k^+)=\alpha_k$, where $\iota:\p V_{k}\rightarrow V_{k}$ denotes the inclusion of the boundary.

Consider the $\e$-neighborhood $U_\e(\gamma)$ of $\gamma$. There is a closed smooth submanifold $N_0\subset U_\e(\gamma)$ with $[N_0]=[N_k]\in H_{n-2}(\ti{Y}\backslash\gamma;\Z)$ (this $N_0$ will be the image under the exponential map of small sphere around the origin in the fiber over $\gamma(0)$ in the normal bundle of $\gamma$). Hence we can find a smooth oriented submanifold $B^{n-1}$ with $\p B = N_0\cup N_k$ and by possibly deforming $B$ a little bit we can ensure that $B$ intersects $\p V_{k}$ transversely in a closed $(n-2)$-dimensional submanifold $N^+_k$. Then $[N_k^+]$ is homologous to $[N_k]$ in $H_{n-2}(V_{k};\Z)$ and we can set $\alpha_k^+=[N_k^+]\in H_{n-2}(\p V_{k};\Z)$.

Finally, for $\delta>0$ small enough, $U_\delta(N_k)$ is isometric via the exponential map to the normal $\delta$-disc bundle $D_\delta(N_k)$. It follows that $\p U_\delta(N_k)=N_k\times S^1$ and we set $\alpha_k^-=[N_k\times\{0\}]\in H_{n-2}(N_k\times S^1;\Z)$. We conclude that $M_k=V_{k}\backslash U_\delta(N_k)$ is a compact band with boundary $\p_+M_k=\p V_k$ and $\p_-M_k=\p U_\delta(N_k)=N_k\times S^1$, which has $\wid(M_k,\ti{g})\geq r_{k-1}-\e$. Furthermore the triple $(\alpha_k, \alpha_k^+, \alpha_k^-)$ represents a band class in $M_k$. For $k$ big enough $\wid(M_k,\ti{g})\geq \frac{r_{k-1}}{2}-\e>\frac{2\pi}{\sqrt{\sigma}}$.
\end{proof}

\begin{Lemma}\label{lem:hopf}
Let $M'$ and $M$ be two smooth compact connected oriented $n$-dimensional bands and $f:M'\rightarrow M$ be a map of degree $d\neq0$ with $f(\p_\pm M')=\p_\pm M$. If $\alpha\in H_{n-2}(M;\Z)$ is a band class, there is a band class $\alpha'\in H_{n-2}(M';\Z)$ with $f_*(\alpha')=d\alpha$.
\end{Lemma}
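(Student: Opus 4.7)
The plan is to use Poincar\'e--Lefschetz duality and naturality of the cap product to transfer the band class structure from $M$ to $M'$. I would first fix $\alpha^\pm \in H_{n-2}(\p_\pm M;\Z)$ with $\iota_*\alpha^+ = \iota_*\alpha^- = \alpha$ and view both inside $H_{n-2}(\p M;\Z) = H_{n-2}(\p_+M;\Z)\oplus H_{n-2}(\p_-M;\Z)$. The band class condition gives $\iota_*(\alpha^+-\alpha^-)=0$, so the long exact sequence of the pair $(M,\p M)$ produces a relative class $\tau \in H_{n-1}(M,\p M;\Z)$ with $\p\tau = \alpha^+-\alpha^-$.

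Next I would lift $\tau$ to cohomology and pull it back. By Poincar\'e--Lefschetz duality I can write $\tau = \mu\cap [M,\p M]$ for some $\mu\in H^1(M;\Z)$ and set $\tau' := f^*\mu\cap [M',\p M'] \in H_{n-1}(M',\p M';\Z)$. Naturality of the cap product gives $f_*\tau' = \mu \cap f_*[M',\p M'] = d\tau$, and naturality of the connecting homomorphism then yields $f_*(\p\tau') = d\p\tau = d(\alpha^+-\alpha^-)$ in $H_{n-2}(\p M;\Z)$.

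I would then decompose $\p\tau' = \beta^+ - \beta^-$ with $\beta^\pm \in H_{n-2}(\p_\pm M';\Z)$, using $\p M' = \p_+M'\sqcup\p_-M'$. Since $f$ preserves the labeling of boundary components, matching components after pushforward gives $(f|_{\p_\pm M'})_*\beta^\pm = d\alpha^\pm$. Exactness of the long exact sequence of $(M',\p M')$ says $\iota'_*\p\tau'=0$, hence $\iota'_*\beta^+=\iota'_*\beta^-$, and I would define $\alpha' := \iota'_*\beta^+ = \iota'_*\beta^-$. This $\alpha'$ is a band class (with $\beta^\pm$ serving as the required boundary lifts), satisfies
\[
f_*\alpha' = \iota_*(f|_{\p_+M'})_*\beta^+ = \iota_*(d\alpha^+) = d\alpha,
\]
and is non-torsion because $f_*(m\alpha') = md\alpha \neq 0$ for every nonzero integer $m$ (as $\alpha$ is non-torsion and $d\neq 0$).

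The only mildly subtle step is the very first one: the strong band class condition $\iota_*\alpha^+=\iota_*\alpha^-$, rather than mere membership of $\alpha$ in $\mathrm{im}(\iota_*)$, is precisely what packages $(\alpha^+,\alpha^-)$ into a single relative cycle $\tau$ whose Poincar\'e dual $\mu$ is a global cohomology class on $M$ available for pullback along $f$. Everything afterwards is a routine naturality computation; the identity $(f|_{\p_\pm M'})_*[\p_\pm M']=d[\p_\pm M]$ implicit above is obtained by applying $\p$ to $f_*[M',\p M']=d[M,\p M]$ and splitting along the boundary decomposition.
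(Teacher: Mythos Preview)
Your proof is correct. Both your argument and the paper's rest on Poincar\'e--Lefschetz duality together with naturality of the cap product, but you organize things one degree higher: instead of dualizing $\alpha$ itself to a class $\eta\in H^2(M,\p M;\Z)$ and separately dualizing $\alpha^\pm$ to classes $\eta^\pm\in H^1(\p_\pm M;\Z)$ (then checking that $\eta^\pm$ map to $\eta$ under the connecting homomorphism), you first package the band data $\alpha^+-\alpha^-$ into a single relative class $\tau\in H_{n-1}(M,\p M;\Z)$, dualize once to $\mu\in H^1(M;\Z)$, pull back, and then recover the boundary lifts $\beta^\pm$ simply by applying $\p$ and splitting along $\p M'=\p_+M'\sqcup\p_-M'$. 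This buys you some economy---one duality step instead of three, and the compatibility $\iota'_*\beta^+=\iota'_*\beta^-$ is automatic from exactness---at the cost of a slightly less symmetric presentation. The two arguments are related by the commuting square between the connecting maps $H^1(M)\to H^2(M,\p M)$ and $H_{n-1}(M,\p M)\to H_{n-2}(\p M)$ under Lefschetz duality.
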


\begin{proof}
Consider the following diagram:
\begin{displaymath}\begin{CD}
H_{n-2}(M';\Z)@<{\cap[M',\p M']}<\cong<H^2(M',\p M';\Z)\\
@Vf_*VV				@AAf^*A\\
H_{n-2}(M;\Z)@<{\cap d[M,\p M]}<<H^2(M,\p M;\Z).
\end{CD}
\end{displaymath}
The diagram commutes since $f_*[M',\p M']=d[M,\p M]$. By Lefschetz duality there is a unique cohomology class $\eta\in H^2(M,\p M;\Z)$ with $\eta\cap d[M,\p M]=d\alpha\neq0$ since $\alpha$ is non torsion. Then $\alpha':=f^*\eta\cap[M',\p M']$ is such that $f_*\alpha'=d\alpha$.

The (co)homology of $\p M$ and $\p M'$ splits as the direct sum of the (co)homology of the components $\p_\pm M$ and $\p_\pm M'$ i.\ e. $H_*(\p M)=H_*(\p_-M)\oplus H_*(\p_+M)$  and  $H^*(\p M)=H^*(\p_-M)\oplus H^*(\p_+M)$. The induced maps $f_*$ and $f^*$ split into components as well. By comparing the components of
$$f_*([\p_+M'])-f_*([\p_-M'])=\p f_*[M',\p M']=\p d[M,\p M]=d[\p_+ M]-d[\p_-M],$$
we conclude that $f_*([\p_+M'])=d[\p_+ M]$ and $f_*([\p_-M'])=d[\p_- M]$, so the restricted maps between the boundary components have degree $d$ as well.

For both boundary components we separately write down a diagram as above
\begin{displaymath}\begin{CD}
H_{n-2}(\p_\pm M';\Z)@<{\cap[\p_\pm M']}<\cong<H^1(\p_\pm M';\Z)\\
@Vf_*VV				@AAf^*A\\
H_{n-2}(\p_\pm M;\Z)@<{\cap d[\p_\pm M]}<<H^1(\p_\pm M;\Z)
\end{CD}
\end{displaymath}
and we find unique cohomology classes $\eta^\pm\in H^1(\p_\pm M;\Z)$ with $\eta^\pm\cap d[\p_\pm M]=\pm d\alpha^\pm$.\\
We then consider 
\begin{displaymath}
\begin{CD}
H^1(\p_-M;\Z)\oplus H^1(\p_+M;\Z)@>{\cap[\p M]}>>H_{n-2}(\p_-M;\Z)\oplus H_{n-2}(\p_+M;\Z)\\
@VVV				@VV\iota_*V\\
H^2(M,\p M;\Z)@>{\cap[M]}>>H_{n-2}(M;\Z)
\end{CD}
\end{displaymath}
By comparing components we see that $\eta^-$ and $\eta^+$ map to $\eta$ under the connecting homomorphism $H^1(\p M;\Z)\to H^2(M,\p M;\Z)$. Thus we define classes $\alpha'^\pm:=f^*\eta^\pm\cap[\p_\pm M']$. Finally 
\begin{displaymath}
\begin{CD}
H^1(\p M;\Z)@>f^*>>H^1(\p M';\Z)@>{\cap[\p M']}>>H_{n-2}(\p M';\Z)\\
@VVV				@VVV @V\iota_*VV\\
H^2(M,\p M;\Z)@>f^*>>H^2(M',\p M';\Z)@>{\cap [M]}>>H_{n-2}(M';\Z)
\end{CD}
\end{displaymath}
implies that $\iota_*(\alpha'^\pm)=\alpha'$.
\end{proof}

The following Proposition is well known. A  detailed proof can be found in \cite{ChLiL21}*{Section 4}.

\begin{Proposition}\label{prop:fix}
Let $Y^n$ and $Z^n$be closed connected oriented manifolds and $f:Z\to Y$ a smooth map with $\deg(f)\neq0$.
The pullback $pr:\hat{Z}\rightarrow Z$ of the universal covering $\ti{Y}\to Y$ has the following properties:
\begin{itemize}
\item $\hat{Z}$ is non compact,
\item the map $f\circ pr:\hat{Z}\to Y$ can be lifted to a map $\hat{f}:\hat{Z}\rightarrow\ti{Y}$,
\item $\hat{f}$ is proper and $\deg(\hat{f})=\deg(f)$.
\end{itemize}
\end{Proposition}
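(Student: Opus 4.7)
The plan is to realize $\hat Z$ explicitly as the fibered product
\[
\hat Z := Z \times_Y \ti Y = \{(z,\ti y) \in Z \times \ti Y \mid f(z) = \pi(\ti y)\},
\]
where $\pi\colon \ti Y \to Y$ is the universal covering, and to take $pr$ and $\hat f$ to be the two canonical projections. The pullback square commutes by construction, so $\pi\circ\hat f = f\circ pr$ gives the lift required by the second bullet for free. Since the pullback of a covering along any continuous map is again a covering, $pr\colon\hat Z\to Z$ is a covering map; this equips $\hat Z$ with the structure of a smooth oriented $n$-manifold, with orientation pulled back along the local diffeomorphism $pr$.

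For the non-compactness I will use that in the intended application $Y$ is a closed aspherical manifold of positive dimension, so $\pi_1(Y)$ is infinite (otherwise $\ti Y$ would be a closed contractible manifold, forcing $\dim Y = 0$). The fiber of $pr$ over any $z\in Z$ is naturally identified with $\pi^{-1}(f(z))$ and therefore has cardinality $|\pi_1(Y)|=\infty$; a covering of the compact space $Z$ with infinite fiber cannot itself be compact. Properness of $\hat f$ is immediate from the explicit model: for any compact $K\subset\ti Y$,
\[
\hat f^{-1}(K) = \hat Z \cap (Z\times K)
\]
is closed in the compact space $Z\times K$ and hence compact.

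It remains to show $\deg(\hat f) = \deg(f)$. Since $\pi$ is a local diffeomorphism, $d\hat f_{(z,\ti y)}$ is surjective if and only if $df_z$ is surjective; in particular, every element of $\pi^{-1}(y)$ is a regular value of $\hat f$ whenever $y$ is a regular value of $f$, and such $y$ exist by Sard's theorem. At such $\ti y$ the projection $pr$ restricts to a bijection $\hat f^{-1}(\ti y) \to f^{-1}(y)$, and the commutativity of the pullback square together with the fact that $\pi$ and $pr$ preserve orientations matches the local sign of $\hat f$ at $(z,\ti y)$ with the local sign of $f$ at $z$. Summing over the finite fiber then yields $\deg(\hat f) = \deg(f)$. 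The main point requiring care is that, because $\hat Z$ is non-compact and possibly disconnected, the degree of $\hat f$ must be defined via the proper-map formalism, equivalently via signed counts at a regular value with finite preimage; properness of $\hat f$ is precisely what makes this well-defined, which is why that bullet is separated out and proved first.
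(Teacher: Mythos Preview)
Your argument is correct and is the standard fibered-product construction. The paper itself does not give a proof of this proposition: it calls the statement well known and refers to \cite{ChLiL21}*{Section 4} for details, so there is nothing in the paper to compare your approach against.

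One point worth flagging (which you already handle): the first bullet, ``$\hat Z$ is non-compact'', is not true under the hypotheses as literally stated---it requires $\pi_1(Y)$ to be infinite. You correctly supply this by appealing to the aspherical hypothesis on $Y$ from the surrounding application (Theorem~\ref{prop:deg}), which is the only place the proposition is used. The remaining bullets (existence of the lift, properness of $\hat f$, and $\deg(\hat f)=\deg(f)$ via a regular-value count for proper maps) are argued cleanly and need no further justification.
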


\begin{proof}[Proof of Theorem \ref{prop:deg}]
Let $Y^4$ be a closed oriented aspherical manifold.
Let $Z^4$ be a closed oriented manifold and $f:Z\to Y$ a continuous map with $\deg(f)\neq0$. We remind the reader that for any metric on $Y$ the universal cover, equipped with the pullback metric, is uniformly contractible (see for example \cite{Gro83}*{Section 4.5.D}) i.\ e. for every radius $R>0$ there is a radius $C(R)>0$ such that for every point $y\in \ti{Y}$ the ball $B_R(y)$ is contractible within $B_{C(R)}(y)$.\\
Assume for a contradiction, that $Z$ admits a Riemannian metric $g_1$ with $Sc(Z,g)=R_Z>\sigma>0$. By possibly replacing it with a homotopic map, we can assume that $f:Z\rightarrow Y$ is smooth. Let $g_2$ be a Riemannian metric on $Y$. Then $f:(Z;g_1)\rightarrow (Y,g_2)$ is a Lipschitz map and by possibly rescaling $g_2$, we can assume that it is distance decreasing.

By Proposition \ref{prop:fix} there is a covering space $\hat{Z}$ of $Z$ and a lift $\hat{f}:\hat{Z}\rightarrow \ti{Y}$ such that $\hat{f}$ is proper and $\deg(\hat{f})=\deg(f)$. By Lemma \ref{lem:link} we can find a compact band $M$ in $(\ti{Y},\ti{g_2})$ with $\wid(M,\ti{g_2})>\frac{2\pi}{\sqrt{\sigma}}$ and a band class $\alpha\in H_{n-2}(M;\Z)$, such that for every cycle $c\subset M$ representing a nonzero multiple of $\alpha$ we have $\fil_\Z(c,\ti{Y})>\frac{2\pi}{\sqrt{\sigma}}$. By transversality we can deform the map $\hat{f}$ by an arbitrarily small amount to make it transverse to $\p M_k=\p_+M\cup\p_-M$, while remaining distance decreasing.\\
Then $M'= \hat{f}^{-1}(M)$ is a compact band in $\hat{Z}$ with smooth boundary $$\p M'=\hat{f}^{-1}(\p M)=\hat{f}^{-1}(\p_+ M)\cup\hat{f}^{-1}(\p_- M)=:\p_+M'\cup\p_-M'$$ and since $\hat{f}$ is distance decreasing $\wid(M',\hat{g_1})>\frac{2\pi}{\sqrt{\sigma}}$. Furthermore $\hat{f}$ restricts to a map $M'\rightarrow M$ of degree non-zero. By Lemma \ref{lem:hopf}, there is a band class $\alpha'\in H_{n-2}(M';\Z)$ with $\hat{f}_*\alpha'=\deg(\hat{f})\alpha\neq 0$.

By Proposition \ref{prop:ana} there is a smooth oriented submanifold $\Sigma$ which represents $\alpha'$ and  each connected component $\Sigma_0$ of $\Sigma$ is a 2-sphere with
$$\diam(\Sigma_0,\hat{g}|_{\Sigma_0})\leq\sqrt{\frac{2}{\inf R_Z-\sigma}}\pi.$$
Then $\hat{f}(\Sigma)$ is a cycle $c_0$ in $M$, which represents $\deg(\hat{f})\alpha$ and $\fil_\Z(c_0,\ti{Y})\leq C\left(\sqrt{\frac{2}{\inf R_Z-\sigma}}\pi\right)$, since $\hat{f}$ is distance decreasing and $(\ti{Y},\ti{g})$ is uniformly contractible. For $\sigma>0$ small enough this yields
\begin{equation}\label{eq:last}
\fil_\Z(c_0,\ti{Y})>\frac{2\pi}{\sqrt{\sigma}}>C\left(\sqrt{\frac{2}{\inf R_Z-\sigma}}\pi\right)\geq\fil_\Z(c_0,\ti{Y}),
\end{equation}
which is a contradiction.
\end{proof}

\end{document}